\newtheorem{theorem}{Theorem}[section]
\newtheorem{remark}{Remark}[section]
\newtheorem{proposition}{Proposition}[section]
\newtheorem{corollary}{Corollary}[section]
\newenvironment{proof}{\begin{trivlist}
\item[\hspace{\labelsep}{\bf\noindent Proof. }]}
{$\hfill\Box$\end{trivlist}}
\title{\bf A multispecies birth--death--immigration process and its diffusion approximation}
\author{
{\sc Antonio Di Crescenzo}\footnote{
Dipartimento di Matematica, Universit\`a di Salerno, Via Giovanni Paolo II n.\ 132, 84084 Fisciano (SA), Italy, 
email: {adicrescenzo@unisa.it}           
}
, \ 
{\sc Barbara Martinucci}\footnote{
Dipartimento di Matematica, Universit\`a di Salerno, Via Giovanni Paolo II n.\ 132, 84084 Fisciano (SA), Italy, 
email: {bmartinucci@unisa.it}  
}
, \ 
{\sc Abdelaziz Rhandi}\footnote{ 
Dipartimento di Ingegneria dell'Informazione, Ingegneria Elettrica e Matematica Applicata, 
Universit\`a di Salerno, Via Giovanni Paolo II n.\ 132, 84084 Fisciano (SA), Italy, 
email: {arhandi@unisa.it}
}
}
\date{\normalsize 
\bf First published in {\em Journal of Mathematical Analysis and Applications}  \\
Vol.\ 442, p.\ 291--316  \ \copyright\ 2016 by Elsevier}
\begin{document}
\maketitle

\begin{abstract}
We consider an extended birth-death-immigration process defined on a lattice formed by the integers 
of $d$ semiaxes joined at the origin. When the process reaches the origin, then it may jumps toward 
any semiaxis with the same rate. The dynamics on each ray evolves according to a one-dimensional 
linear birth-death process with immigration. We investigate the transient and asymptotic behavior of 
the process via its probability generating function. The stationary distribution, when existing, is a 
zero-modified negative binomial distribution. We also study a diffusive approximation of the process, 
which involves a diffusion process with linear drift and infinitesimal variance on each ray. 
It possesses a gamma-type transient density admitting a stationary limit. 
\par
As a byproduct of our study, we obtain a closed form of the number of permutations with a 
fixed number of components, and  a new series form of the polylogarithm function 
expressed in terms of the Gauss hypergeometric function.

\smallskip\noindent
{\em Keywords:\/} 
Birth-death process;
Diffusion process; 
Permutations with $k$ components; 
Polylogarithm function. 

\smallskip\noindent
2010 Mathematics Subject Classification: 
60J80; 60J85; 60J70
\end{abstract} 

\section{Introduction}
We study a continuous-time stochastic process describing the dynamics of a population formed by 
a fixed number $d$ of non-interacting species competing for a single habitat. 
The problem of species competition is often approached in the literature by use of spatial models 
and competitive hierarchy. In some cases the number of species and the number of sites are 
fixed by assumption. See  Buttel {\em et al.} \cite{BuDuLe2002}, where specific attention is 
given to the number of species that can coexist on a finite number of sites. The models 
studied in Di Crescenzo {\em et al.} \cite{DiGiNoRi01} take into account colonization, death 
and replacement, both in the presence and in absence of hierarchic rules for the species. 
In this paper we investigate a continuous-time stationary Markov chain  ${\cal N}$, over a 
lattice formed by the integers of $d$ semiaxes joined at the origin, i.e.\ an extended star 
graph. This process is a suitable extension of a linear birth-death-immigration process 
(with constant immigration rates, and linear birth and death rates), and 
describes the dynamics of a population formed by $d$ non-interacting species into a given 
habitat. As soon as the habitat is occupied by an individual of a certain species (by effect 
of immigration) then the dynamics evolves according to a linear birth-death-immigration 
process until extinction. Next the habitat can be occupied again due to immigration of an 
individual of a possibly different species, and so on. In other terms, the local population 
is sustained primarily by reproduction of resident individuals, but may be subsidized by 
immigration of individuals of the same species. The rule of immigration, however, is seen 
mainly as allowing recolonization of the habitat after the extinction of the local population. 
\par
The linear birth-death process with immigration 
is often employed as a stochastic model for population processes in biology and ecology 
(see, for instance, Chao and Zheng \cite{ChZh2003}, Crawford and Suchard \cite{CrSu2012}, 
Kyriakidis \cite{Ky94}, Ricciardi \cite{Ri86}, Zheng {\em et al.}\ \cite{ZhChJi2004}). 
A  birth-death-immigration process including the possibility of multiple immigrations has been 
discussed recently by Jakeman and Hopcraft \cite{JaHo2012}. We recall
for instance the application of birth-death processes on graphs to evolutionary models 
of spatially structured populations. See Allen and Tarnita \cite{AlTe2014} for a comprehensive 
investigation on state-dependent birth-death population models with fixed population size 
and structure, and Broom and Rycht\'a$\check{\rm r}$ \cite{BrRy2008} for evolutionary
dynamics of populations on graphs. 
%
\par
In this paper we first propose to investigate the distribution of the number of individuals of the local 
population, with special attention to the dependence of the stationary distribution on the number 
of species. We point out that the linear nature of birth and death rates allows us to obtain 
explicit closed-forms both for the transient and stationary dynamics of the process ${\cal N}$, 
rather than approximate or simulated results. 
\par
A further object of our investigation is the diffusive approximation of ${\cal N}$. The 
adopted procedure leads to a diffusion process with linear drift and infinitesimal variance, defined 
on the rays of a star graph. It is worth pointing out that we are able to study the transient behavior 
of the approximating diffusion process, via a gamma density with constant shape parameter 
and time-varying rate.
\par
We recall that diffusion processes on graphs have been studied by several authors. 
See for instance Freidlin and Wentzell \cite{FrWe93}, that is one of the first contributions 
on this topic, and Weber \cite{We01} for occupation time functionals for diffusion 
processes and birth-death processes on graphs. 
An investigation involving a diffusion process on star graph has been performed in
Papanicolaou {\em et al.} \cite{PaPaLe12}, where the authors obtain exit probabilities and
certain other quantities involving exit and occupation times for a Brownian Motion on star 
graph. Other examples of diffusion processes on star graphs can be found in
Mugnolo {\em et al.} \cite{MuPrRh14}. 
\par
This is the plan of the paper. In Section \ref{Section:2} we introduce the process ${\cal N}$ 
and its generator.
In Section \ref{Section:GenFunct} we develop a generating function-based approach and obtain 
some useful integral equations. This allows us to get a formal expression for the transient probability 
that the process is located in the origin, i.e.\ the habitat is empty, the proof being provided 
in \ref{appendix-A}. In Section \ref{section:3} we perform the transient analysis of the 
process in two different cases. The adopted technique is based on the coupling of the homotopy 
perturbation method and an expansion in Taylor series. In Section \ref{section:4} we use the 
Laplace transform to derive the asymptotic expression of the state probabilities (involving a 
zero-modified negative binomial distribution) and also of the mean and variance. 
Section \ref{section:5} deals with the diffusive approximation of ${\cal N}$. We adopt a 
customary scaling that leads to a time-homogeneous diffusion process on the star graph, 
characterized by linear infinitesimal moments. A gamma-type stationary density is also 
obtained under suitable assumptions. In Section \ref{section:new} we interpret our 
results in biological terms with special reference to the role of the number of species $d$. 
Finally, some concluding remarks are given in Section \ref{section:8}. 
\par
It is worth pointing out that, as a byproduct of our investigations, in Section \ref{section:3} 
we provide some new results of wide interest in mathematics, i.e.\ a closed form of the 
number of permutations of  $\{1,\ldots , n\}$ with $k$ components, also known as the number 
of permutations  with $k-1$ global descents, and  a new series form of the polylogarithm 
function expressed in terms of the Gauss hypergeometric function. 
\section{The stochastic model}\label{Section:2}
Consider a habitat that may accommodate individuals of 1 out of $d$ population species, 
with $d\in \mathbb{N}^+$, and let $D=\{1,2,\ldots,d\}$. 
Assume that the evolution of individuals in the habitat is subject to 
births, deaths and immigrations, according to the following rules, where $h>0$ is sufficiently small: 
\begin{description}
\item{\em (i) \ } 
If the habitat is empty at time $t$, then during the time interval $(t,t+h]$ either the 
habitat is occupied by an individual of species $j$, $(j\in D)$, with 
probability $\alpha h+o(h)$ (due to immigration), or it remains empty with 
probability $1-d\alpha h+o(h)$. 
\item{\em (ii) \ } 
If the habitat at time $t$ is occupied by $k$ individuals of species $j$, $(j\in D)$, then 
during the time interval $(t,t+h]$ either one individual dies with probability $\mu k  h+o(h)$, 
or a new individual of the same species arrives with probability $(\alpha+\lambda k)  h+o(h)$  
(by the effect of immigration or birth), or the population size remains unchanged with probability
$1-(\alpha+(\lambda+\mu) k) h+o(h)$. 
\end{description}
Hence, note that when the habitat is empty each species may compete for the colonization of the 
habitat, whereas when a species occupies the habitat there is no interaction with other species. 
\begin{figure}[h]
\begin{center}
\begin{picture}(260,220)
\put(50,50){\line(1,1){140}}
\put(50,50){\line(2,1){140}}
\put(50,50){\line(6,1){140}}
\put(50,50){\line(2,-1){140}}
\put(50,50){\circle*{3}}
\put(100,100){\circle*{3}}
\put(100,75){\circle*{3}}
\put(100,58.5){\circle*{3}}
\put(100,25){\circle*{3}}
\put(150,150){\circle*{3}}
\put(150,100){\circle*{3}}
\put(150,66.5){\circle*{3}}
\put(150,0){\circle*{3}}
\put(40,53){\makebox(20,15)[t]{\small $(0,0)$}}
\put(91,102){\makebox(20,15)[t]{\small $(1,1)$}}
\put(91,75){\makebox(20,15)[t]{\small $(1,2)$}}
\put(91,55){\makebox(20,15)[t]{\small $(1,3)$}}
\put(91,22){\makebox(20,15)[t]{\small $(1,d)$}}
\put(141,152){\makebox(20,15)[t]{\small $(2,1)$}}
\put(141,99){\makebox(20,15)[t]{\small $(2,2)$}}
\put(141,65){\makebox(20,15)[t]{\small $(2,3)$}}
\put(141,-1){\makebox(20,15)[t]{\small $(2,d)$}}
\put(205,180){\makebox(20,15)[t]{\small $S_1$}}
\put(205,110){\makebox(20,15)[t]{\small $S_2$}}
\put(205,65){\makebox(20,15)[t]{\small $S_3$}}
\put(205,-30){\makebox(20,15)[t]{\small $S_d$}}
\put(100,40){\line(0,1){2}}
\put(100,45){\line(0,1){2}}
\put(100,50){\line(0,1){2}}
\put(150,26){\line(0,1){2}}
\put(150,38){\line(0,1){2}}
\put(150,50){\line(0,1){2}}
\put(215,5){\line(0,1){4}}
\put(215,25){\line(0,1){4}}
\put(215,45){\line(0,1){4}}
\end{picture}
\vspace{1cm}
\end{center}
\caption{Schematic representation of the state space $S$.}
\label{FigSystem}
\end{figure}
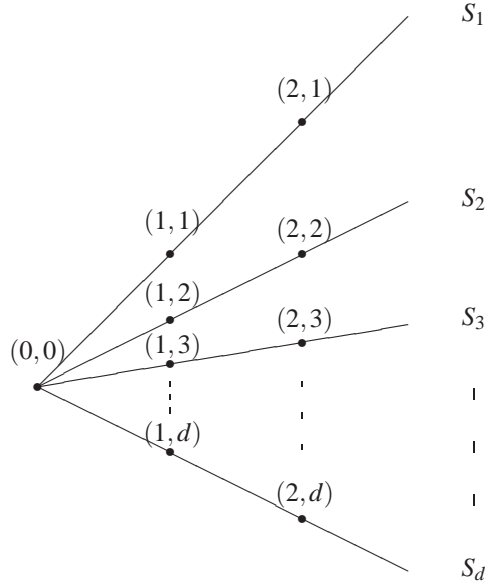
\par
The dynamics  is described by a continuous-time Markov chain 
${\cal N}:=\{(N(t),J(t)), t\geq 0\}$, where $(N(t),J(t))=(0,0)$ if at time $t$ the habitat is empty, 
and $(N(t),J(t))=(k,j)$ if at time $t$ the habitat is occupied by $k$ individuals of the $j$-th 
species. The state space of ${\cal N}$ is the set $S=\{(0,0)\}\cup ({\mathbb N}^+\times D)$, 
consisting of the integers of $d$ semiaxes $S_1,S_2,\ldots, S_d$ $(d\in\mathbb{N}^+)$ 
with a common origin $(0,0)$ (see Figure \ref{FigSystem}). 
We denote the transition rates of ${\cal N}$ by 
$$
 q({\bf u};{\bf v})=\lim_{h\rightarrow 0^+} \frac{1}{h} 
 {\mathbb P}[(N(t+h),J(t+h))={\bf v}\,|\,(N(t),J(t))={\bf u}],
 \qquad {\bf u},{\bf v}\in S.
$$
According to assumptions {\em (i)} and {\em (ii)}, the generator 
$Q:=(q({\bf u};{\bf v}),{\bf u},{\bf v}\in S)$ of ${\cal N}$ satisfies 
\begin{equation}
 q(0,0;1,j)=\alpha, \qquad q(k,j;k+1,j)=\alpha+\lambda k, 
 \qquad q(k,j;k-1,j)=\mu k,
  \nonumber 
\end{equation}
\begin{equation}
 q(k,j;r,j)=0\quad \hbox{if $|k-r|>1$}, \qquad q(k,i;r,j)=0\quad \hbox{if $i\neq j$}, 
 \label{eq:tassi}
\end{equation}
$$ 
 q(0,0;r,j)=q(r,j;0,0)=0\quad \hbox{if $r\neq 1$}, 
$$
for all $k,r\in {\mathbb N}^+$ and $i,j\in D$, where $\alpha>0$, $\lambda>0$ and $\mu>0$ 
are constants denoting the immigration, birth and death rate per individual, respectively. 
\par
Noting that ${\cal N}$ is a skip-free process and that $(0,0)$ is a non-absorbing state, the above 
assumptions imply that ${\cal N}$ is nonexplosive (cf.\ Chen {\em et al.} \cite{ChPoZhCa2005}), 
and hence uniquely determined by $Q$. 
\par
If $d=1$ then ${\cal N}$ identifies with the linear birth-death process with immigration, 
that is well-known among population models (see Section 3.1 of Crawford and Suchard \cite{CrSu2012}
and references therein). The purpose of our study is the extension of the birth-death-immigration 
process to the case of $d$ non-iteracting populations according to the assumptions indicated above. 
\section{Generating functions}\label{Section:GenFunct}
We assume that the initial state of ${\cal N}$ is the origin, which for simplicity will be henceforth 
denoted as $0$ instead of $(0,0)$ . Hence, the transition probability of ${\cal N}$ is defined as 
\begin{equation}
\begin{split}
  & p(k,j, \cdot )={\mathbb P}\{(N(\cdot),J(\cdot))=(k, j)\,|\, (N(0),J(0))=0\},\qquad 
  k\in {\mathbb N}^+, \;\; j\in D, \\
  & p(0, \cdot )={\mathbb P}\{(N(\cdot),J(\cdot))=0\,|\, (N(0),J(0))=0\}.
\end{split}
\label{eq:probpkjt}
\end{equation}
The initial condition is thus expressed as 
\begin{equation}
 \lim_{t\to 0^+}p(0,t)=1.
 \label{eq:probiniz}
\end{equation}
From (\ref{eq:probpkjt}) we have that 
\begin{equation}
 P(k,\cdot):=\sum_{j=1}^d p(k,j,\cdot),\qquad k\in {\mathbb N}^+, 
 \label{pgrandekt}
\end{equation}
is the probability of occupancy of the $k$-th state of any semiaxis. 
\par
Consider the probability generating function 
\begin{equation}
F(z,t)=p(0,t)+\sum_{k\geq 1} z^k P(k,t),\qquad z\in [0,1],\quad t\geq 0.
\label{FPgrande}
\end{equation}
By virtue of (\ref{eq:probiniz}), it satisfies the initial condition
\begin{equation}
F(z,0)=1,\qquad z\in [0,1].
\label{initialconditions}
\end{equation}
Moreover, the following boundary conditions hold:
\begin{equation}
F(1,t)=1,\qquad t\geq 0,
\label{boundconditions}
\end{equation}
\begin{equation}
F(0,t)=p(0,t),\qquad t\geq 0,
\label{initialconditions2}
\end{equation}
where $p(0,t)$ is the probability that the habitat is empty at time $t$. 
\begin{proposition}
The generating function (\ref{FPgrande}) satisfies the following differential equation
for $z\in [0,1]$ and $t\geq 0$:
\begin{equation}
{\partial \over \partial t}\!F(z, t)=-\alpha (d-1)(1-z)p(0,t)-\alpha (1-z) F(z,t)
-(\lambda z-\mu)(1-z) {\partial \over \partial z}\!F(z, t).
\label{eq:diffF}
\end{equation}
\end{proposition}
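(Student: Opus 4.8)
The plan is to derive \eqref{eq:diffF} from the Kolmogorov forward equations of the Markov process $N(t)$ and then repackage them through the generating function \eqref{FPgrande}. First I would write the balance equation at each type of state from the rates \eqref{eq:tassi}. The origin loses probability at total rate $d\alpha$ (one outflow $\alpha$ toward each first-state $1_j$) and is fed only by the states $1_j$ at rate $\mu$, so that
\begin{equation*}
\frac{d}{dt}p(0,t)=-d\alpha\,p(0,t)+\mu\,P(1,t).
\end{equation*}
For a generic interior state $k_j$ with $k\ge 2$ the balance reads $\frac{d}{dt}p(k_j,t)=-(\alpha+\lambda k+\mu k)p(k_j,t)+(\alpha+\lambda(k-1))p((k-1)_j,t)+\mu(k+1)p((k+1)_j,t)$, and summing over $j=1,\dots,d$ gives the same relation with each $p(\cdot_j,t)$ replaced by $P(\cdot,t)$ via \eqref{pgrandekt}. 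The case $k=1$ is special: the inflow from below comes from the single origin but is collected by all $d$ semiaxes, so the summed equation carries $d\alpha\,p(0,t)$ rather than the $\alpha\,P(0,t)$ that the interior formula would predict.

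Next I would multiply the equation for $P(k,t)$ by $z^k$, sum over $k\ge 1$, and add $\frac{d}{dt}p(0,t)$, so that the left-hand side becomes $\partial_t F(z,t)$ by \eqref{FPgrande}. The reindexing is routine once one records the two identities $\sum_{k\ge 1}k z^k P(k,t)=z\,\partial_z F$ and $\sum_{k\ge 0}z^k P(k,t)=F$, with the convention $P(0,t)=p(0,t)$. The birth/immigration inflow, after the shift $k\mapsto k-1$, contributes $\alpha z F+\lambda z^2\partial_z F$; the death inflow, after the shift $k\mapsto k+1$, contributes $\mu\,\partial_z F-\mu\,P(1,t)$; and the diagonal loss term contributes $-\alpha\bigl(F-p(0,t)\bigr)-(\lambda+\mu)z\,\partial_z F$. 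Collecting the coefficients of $\partial_z F$ yields $-(1-z)(\lambda z-\mu)$, the drift factor of \eqref{eq:diffF}, while the terms $-\alpha F$ and $\alpha z F$ combine into $-\alpha(1-z)F$, the second term.

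Two bookkeeping cancellations complete the argument, and this is where I expect the only real subtlety to lie. The spurious boundary term $-\mu\,P(1,t)$ produced by the death-shift must cancel against the $+\mu\,P(1,t)$ coming from $\frac{d}{dt}p(0,t)$; it does, leaving no residual dependence on $P(1,t)$. Finally, the excess immigration into the first shell must be restored: because the interior formula undercounts the inflow at $k=1$ by $(d-1)\alpha\,p(0,t)$, I would add $z(d-1)\alpha\,p(0,t)$ to the sum. Combining this with the residual origin contributions $-d\alpha\,p(0,t)+\alpha\,p(0,t)$ gives $\alpha(d-1)(z-1)\,p(0,t)=-\alpha(d-1)(1-z)\,p(0,t)$, precisely the first term of \eqref{eq:diffF}. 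Assembling the three groups then reproduces \eqref{eq:diffF}, and the main care needed throughout is the consistent treatment of the $k=1$ boundary, where the $d$-fold structure of the star couples the origin to every semiaxis.
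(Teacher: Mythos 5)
Your proposal is correct and follows essentially the same route as the paper: both derive the Kolmogorov forward equations from the rates \eqref{eq:tassi} and convert them into a PDE for the generating function, the only cosmetic difference being that the paper first forms a per-semiaxis generating function $G_j$ and then sums over $j$, whereas you sum the balance equations over $j$ first and track the $k=1$ correction $(d-1)\alpha\,p(0,t)$ explicitly. All your cancellations (the $\mu P(1,t)$ boundary term and the collection of the $p(0,t)$ contributions into $-\alpha(d-1)(1-z)p(0,t)$) check out.
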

\begin{proof}
Due to (\ref{eq:tassi}), for $k\in {\mathbb N}^+$ and $j\in D$, the following
system of differential-difference equations holds for $t>0$:
\begin{eqnarray}
\label{eq:system}
&&  \hspace{-0.8cm}
{d \over d t}\;p(0, t)=\mu \sum_{j=1}^d p(1,j,t)-d \alpha\,p(0,t),
\\
&&  \hspace{-0.8cm}
{d \over d t}\;p(k,j, t)=[\alpha+\lambda(k-1)]\,p(k-1,j, t)
+\mu (k+1)\,p(k+1,j, t) -[\alpha+(\lambda+\mu)k]\,p(k,j, t).
\nonumber
\end{eqnarray}
Hence, the probability generating function
\begin{equation}
 G_j(z,t):=\sum_{k\geq 1} z^k p(k,j, t),\qquad z\in [0,1],\quad t\geq 0,
 \label{eq:defGj}
\end{equation}
for $j\in D$ satisfies the following differential equation:
$$
{\partial \over \partial t} G_j(z, t)
=\mu  p(1,j,t)+ \alpha z p(0,t)-\alpha (1-z) G_j(z,t)
-(\lambda z-\mu)(1-z){\partial \over \partial z}G_j(z, t).
$$
Hence, the proof of (\ref{eq:diffF}) follows recalling Eqs.\ (\ref{FPgrande}) and (\ref{eq:defGj}). 
\end{proof}
\par
Here, and in the following, $f'$ denotes the derivative of any function $f$.
\begin{proposition}
Eq.\ (\ref{eq:diffF}), with conditions (\ref{boundconditions}) and (\ref{initialconditions}),
admits the following solution for $z\in [0,1]$ and $t\geq 0$:
\begin{equation}
F(z,t)=H(t)+(d-1)\int_{0}^{t} H'(t-y) p(0,y){\rm d}y,
\label{soluzioneFgenerale}
\end{equation}
where
\begin{equation}
 H(t)=h(t,z;\lambda,\mu):=\left\{
 \begin{array}{ll}
 \displaystyle{\frac{(\lambda-\mu)^{\frac{\alpha}{\lambda}} {\rm e}^{-\frac{\alpha }{\lambda}(\lambda-\mu)t}}
{\left[\lambda(1-z)-(\mu-\lambda z)\,{\rm e}^{-(\lambda-\mu) t}\right]^{\frac{\alpha}{\lambda}}}}, & \lambda\neq \mu,
\\[0.5cm]
\displaystyle{\frac{1}{\left[1+\lambda t (1-z)\right]^{\frac{\alpha}{\lambda}}}},
& \lambda=\mu.
 \end{array}
 \right.
\label{funzioneh}
\end{equation}
\label{propFzt}
\end{proposition}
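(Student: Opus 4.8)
The plan is to recast the equation in operator form and solve it by superposition, separating the homogeneous dynamics from the forcing term $-\alpha(d-1)(1-z)p(0,t)$. Moving every term of (\ref{eq:diffF}) to the left-hand side, the equation reads $\mathcal{L}F=-\alpha(d-1)(1-z)p(0,t)$, where I set $\mathcal{L}u:=\partial_t u+(\lambda z-\mu)(1-z)\,\partial_z u+\alpha(1-z)u$. Since $\mathcal{L}$ is linear and its coefficients do not depend on $t$, I would look for a solution of the form $F=H+(d-1)V$, in which $H$ solves the homogeneous equation $\mathcal{L}H=0$ and carries the initial datum, while $V$ is a Duhamel-type convolution designed to reproduce exactly the forcing term.

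First I would construct $H$. The homogeneous equation $\partial_t H+(\lambda z-\mu)(1-z)\partial_z H=-\alpha(1-z)H$ is a linear first-order PDE, solvable by the method of characteristics: along $\mathrm{d}z/\mathrm{d}t=(\lambda z-\mu)(1-z)$ one has the first integral $\frac{\lambda z-\mu}{1-z}\mathrm{e}^{-(\lambda-\mu)t}$, and the transport of $H$ reduces to $\mathrm{d}H/H=-\alpha\,\mathrm{d}z/(\lambda z-\mu)$. Imposing $H(z,0)=1$ and distinguishing the cases $\lambda\neq\mu$ and $\lambda=\mu$ yields precisely (\ref{funzioneh}); equivalently, one may simply substitute (\ref{funzioneh}) into $\mathcal{L}H=0$ and verify it by differentiation. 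Note that for $d=1$ the forcing vanishes and $H$ is nothing but the classical probability generating function of the one-dimensional birth-death-immigration process. Two facts about $H$ will be needed downstream, both obtained by direct computation: $H(z,0)=1$ and $H(1,t)=1$ for all $t$.

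Then I would assemble the forced part via Duhamel's principle. Writing $V(z,t)=\int_0^t(\partial_tH)(z,t-y)\,p(0,y)\,\mathrm{d}y$, which is the integral $\int_0^tH'(t-y)p(0,y)\,\mathrm{d}y$ appearing in (\ref{soluzioneFgenerale}), I first observe that differentiating $\mathcal{L}H=0$ in $t$ gives $\mathcal{L}(\partial_tH)=0$, because the coefficients are $t$-independent. Applying $\mathcal{L}$ to $V$ and using the Leibniz rule, the boundary term from the upper limit produces $(\partial_tH)(z,0)\,p(0,t)$, while the remaining integrand is $\big(\mathcal{L}(\partial_tH)\big)(z,t-y)\equiv 0$; hence $\mathcal{L}V=(\partial_tH)(z,0)\,p(0,t)$. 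Evaluating $\mathcal{L}H=0$ at $t=0$ with $H(z,0)=1$ forces $(\partial_tH)(z,0)=-\alpha(1-z)$, so $\mathcal{L}V=-\alpha(1-z)p(0,t)$ and therefore $\mathcal{L}F=\mathcal{L}H+(d-1)\mathcal{L}V=-\alpha(d-1)(1-z)p(0,t)$, as required. Finally, $F(z,0)=H(z,0)+0=1$, and since $H(1,t)\equiv 1$ gives $(\partial_tH)(1,t)\equiv 0$ and hence $V(1,t)=0$, one also has $F(1,t)=1$; thus conditions (\ref{initialconditions}) and (\ref{boundconditions}) hold.

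I expect the main obstacle to be the explicit verification that the closed form (\ref{funzioneh}) satisfies $\mathcal{L}H=0$: the quotient structure and the case split $\lambda\neq\mu$ versus $\lambda=\mu$ make the differentiation somewhat heavy, whereas the Duhamel step is clean once $\mathcal{L}H=0$, $H(z,0)=1$ and the identity $(\partial_tH)(z,0)=-\alpha(1-z)$ are in hand.
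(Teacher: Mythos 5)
Your argument is correct, and it is organized differently from the paper's. The paper attacks the full inhomogeneous equation at once by the method of characteristics: it parametrizes the characteristic curves of $(\lambda z-\mu)(1-z)\partial_z+\partial_t$, reduces (\ref{eq:diffF}) to a first-order linear ODE along those curves with the forcing $p(0,s)$, and then recovers the convolution form (\ref{soluzioneFgenerale}) ``after some calculations'' by undoing the characteristic change of variables. You instead posit the superposition $F=H+(d-1)V$ from the start, reduce everything to the single fact $\mathcal{L}H=0$ with $H(z,0)=1$ (which you may obtain by characteristics, exactly as the paper does for its integrating factor, or by brute-force differentiation of (\ref{funzioneh})), and then handle the forcing by a Duhamel argument exploiting the $t$-independence of the coefficients: $\mathcal{L}(\partial_tH)=0$, and evaluating $\mathcal{L}H=0$ at $t=0$ together with $\partial_zH(z,0)=0$ gives the key identity $(\partial_tH)(z,0)=-\alpha(1-z)$, whence $\mathcal{L}V=-\alpha(1-z)p(0,t)$. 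I checked this identity and the computation $\mathcal{L}H=0$ in both cases $\lambda\neq\mu$ and $\lambda=\mu$; they hold, and your verification of (\ref{initialconditions}) and (\ref{boundconditions}) via $H(z,0)=1$, $H(1,t)=1$, $V(1,t)=0$ is complete. What your route buys is that the convolution structure of (\ref{soluzioneFgenerale}) is transparent a priori rather than emerging from an unwound characteristic computation, and the unpleasant part of the proof is isolated in a single routine verification of $\mathcal{L}H=0$; what the paper's route buys is that it derives the form of the solution rather than verifying a guessed ansatz, which is more natural if one does not already know the answer.
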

\begin{proof}
Let us adopt the method of characteristics.
If $\lambda\neq \mu$, Eq. (\ref{eq:diffF}) can be rewritten as 
\begin{equation}
(\lambda z-\mu) (1-z) {\partial F \over \partial z}+{\partial F \over \partial t}
+\alpha (1-z) F+\alpha(d-1)(1-z)p(0,t)=0,
\label{PDEFLM}
\end{equation}
which gives the following characteristic equations for the original system
\begin{equation}
 {\displaystyle{d z \over d s}=(\lambda z-\mu) (1-z)}, \qquad 
 {\displaystyle{d t \over d s}=1}, \qquad 
 {\displaystyle{d F \over d s}=-\alpha(1-z)F-\alpha(d-1)(1-z)p(0,t)}.
\label{eq:characteristicLM}
\end{equation}
From Eq.\ (\ref{eq:characteristicLM}), along the characteristic curves
$$
z=1-\frac{(\lambda-\mu)(1-\tau)}{\lambda (1-\tau)-(\mu-\lambda \tau){\rm e}^{(\lambda-\mu)s}},\qquad t=s,\qquad \tau\in {\mathbb R},
$$
the partial differential equation (\ref{PDEFLM}) and conditions (\ref{initialconditions}) and (\ref{boundconditions})
yield
$$
 \displaystyle \frac{{\rm d} F}{{\rm d}s}+\alpha
 \left[\frac{(\lambda-\mu)(1-\tau)}{\lambda (1-\tau)-(\mu-\lambda \tau){\rm e}^{(\lambda-\mu)s}}\right]F
+\alpha (d-1)\displaystyle
\left[\frac{(\lambda-\mu)(1-\tau)}{\lambda (1-\tau)-(\mu-\lambda \tau){\rm e}^{(\lambda-\mu)s}}\right]
 p(0,s)=0,
$$
with $F(0)=1$. 
Hence, Eqs.\ (\ref{soluzioneFgenerale}) and (\ref{funzioneh}) follow after some calculations.
If $\lambda=\mu$, the proof is similar.
\end{proof}
\par
Hereafter we show that $p(0,t)$
satisfies a linear Volterra integral equation of the 2nd kind.
\begin{corollary}
The following renewal equation holds, for $t> 0$,
\begin{equation}
 p(0,t)=1-G(t)-(d-1)\int_{0}^{t} G'(t-y) p(0,y){\rm d}y,
 \label{eq:eqpot}
\end{equation}
where
\begin{equation}
 G(t)=1-h(t,0;\lambda,\mu),
 \label{eq_def1menoGt}
\end{equation}
with $h(t,z;\lambda,\mu)$ defined in (\ref{funzioneh}).
\end{corollary}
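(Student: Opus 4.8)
The plan is to derive (\ref{eq:eqpot}) directly from the explicit solution (\ref{soluzioneFgenerale}) of Proposition \ref{propFzt}, simply by specializing it at the boundary $z=0$. The key observation is that the boundary condition (\ref{initialconditions2}) identifies $F(0,t)$ with $p(0,t)$, so evaluating the right-hand side of (\ref{soluzioneFgenerale}) at $z=0$ yields a closed equation for $p(0,t)$ alone.

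First I would evaluate the kernel $H(t)=h(t,z;\lambda,\mu)$ from (\ref{funzioneh}) at $z=0$. By the very definition (\ref{eq_def1menoGt}) of $G$, one has $h(t,0;\lambda,\mu)=1-G(t)$, so the leading term in (\ref{soluzioneFgenerale}) becomes $1-G(t)$. For the convolution term I would note that, since $H'$ is the $t$-derivative of $H(t)=h(t,z;\lambda,\mu)$ at fixed $z$, the operations of differentiating in $t$ and setting $z=0$ commute by smoothness of $h$; hence $H'(t-y)\big|_{z=0}=\partial_t h(t,0;\lambda,\mu)\big|_{t\to t-y}=-G'(t-y)$, where the last step again uses (\ref{eq_def1menoGt}).

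Substituting these two evaluations into (\ref{soluzioneFgenerale}) and invoking $F(0,t)=p(0,t)$ then gives
\begin{equation*}
p(0,t)=\bigl(1-G(t)\bigr)+(d-1)\int_{0}^{t}\bigl(-G'(t-y)\bigr)p(0,y)\,{\rm d}y,
\end{equation*}
which is exactly (\ref{eq:eqpot}) once the signs are collected. I do not expect a genuine obstacle in this argument: the whole content reduces to a substitution. The only point requiring care is the bookkeeping of the time-derivative $H'$ at $z=0$, together with the sign produced by $G=1-h$; this is precisely what accounts for the two minus signs appearing in (\ref{eq:eqpot}).
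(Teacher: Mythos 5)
Your argument is correct and is exactly the paper's proof: the authors likewise obtain (\ref{eq:eqpot}) by setting $z=0$ in (\ref{soluzioneFgenerale})--(\ref{funzioneh}), invoking the boundary condition (\ref{initialconditions2}) to identify $F(0,t)$ with $p(0,t)$, and absorbing the sign change through the definition $G(t)=1-h(t,0;\lambda,\mu)$. Nothing further is needed.
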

\begin{proof}
Eqs.\ (\ref{eq:eqpot}) and (\ref{eq_def1menoGt}) follow from Eqs.\ (\ref{soluzioneFgenerale}) and
(\ref{funzioneh}), for $z=0$ and recalling  condition (\ref{initialconditions2}).
\end{proof}
\par
We remark that $G(t)$, given in (\ref{eq_def1menoGt}), is a proper
distribution function when $\lambda\geq \mu$.
\par
Hereafter we consider the distribution function
\begin{equation}
F_Y^{(j)}(t):={\mathbb P}(Y_1+Y_2+\cdots+Y_j\leq t),
\label{distrsomma}
\end{equation}
where $Y_1,Y_2,\ldots,Y_j$ is a sequence of i.i.d.\ random variables. In the following
theorem we give a formal representation of $p(0,t)$ in terms of (\ref{distrsomma})
when $Y_i$'s are nonnegative and have a specific distribution.
\begin{theorem}
For $t\geq 0$ we have
\begin{equation}
p(0,t)=\left\{
\begin{array}{ll}
\displaystyle{1-{d}\sum_{j=1}^{+\infty} (1-d)^{j-1} F_Y^{(j)}(t)},
 & \quad \lambda\geq \mu, \\[.3cm]
\displaystyle{1-{d}\sum_{j=1}^{+\infty} (1-d)^{j-1}
\left[1-\left(\frac{\mu-\lambda}{\mu}\right)^{\frac{\alpha}{\lambda}}\right]^j F_Y^{(j)}(t)},
 & \quad \lambda< \mu, \\[.4cm]
\end{array}
\right.
\label{p0}
\end{equation}
where
\begin{equation}
F_Y^{(1)}(t)=\left\{
\begin{array}{ll}
\displaystyle{1-\left(\frac{\lambda-\mu}{\lambda {\rm e}^{(\lambda-\mu) t}-\mu}\right)^{\frac{\alpha}{\lambda}}}, & \quad \lambda> \mu, \\[.3cm]
\displaystyle{1-\frac{1}{(1+\lambda t)^{\frac{\alpha}{\lambda}}}},  &
\quad \lambda= \mu, \\[.4cm]
\displaystyle{\frac{1}{1-\left(1-\frac{\lambda}{\mu}\right)^{\frac{\alpha}{\lambda}}}
\, \left[1-\left(\frac{\mu-\lambda}{\mu-\lambda {\rm e}^{-(\mu-\lambda) t}}\right)^{\frac{\alpha}{\lambda}}\right]  },  & \quad \lambda< \mu. \\[.4cm]
\end{array}
\right.
\label{distr_somma}
\end{equation}
\label{theorem1}
\end{theorem}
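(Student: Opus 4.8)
The plan is to start from the linear Volterra integral equation (\ref{eq:eqpot}) of the preceding Corollary, which already isolates $p(0,t)$, and to solve it explicitly by a Neumann (Picard) iteration. The first task is to rewrite the forcing term and the kernel in probabilistic form. Setting $g:=G'$, where $G(t)=1-h(t,0;\lambda,\mu)$ from (\ref{eq_def1menoGt})--(\ref{funzioneh}), I would first evaluate $h(t,0;\lambda,\mu)$ in closed form. A direct simplification gives, for $\lambda\neq\mu$, $h(t,0;\lambda,\mu)=\bigl((\lambda-\mu)\,{\rm e}^{-(\lambda-\mu)t}/(\lambda-\mu\,{\rm e}^{-(\lambda-\mu)t})\bigr)^{\alpha/\lambda}$, from which $G(0)=0$ and $G(\infty)=1$ when $\lambda\geq\mu$, whereas $G(\infty)=1-(\tfrac{\mu-\lambda}{\mu})^{\alpha/\lambda}=:p_0<1$ when $\lambda<\mu$. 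Thus $G$ is a proper distribution function for $\lambda\geq\mu$ and a defective one for $\lambda<\mu$, and in every case $G(t)=p_0\,F_Y^{(1)}(t)$ with $p_0=1$ for $\lambda\geq\mu$. Matching this identity against (\ref{funzioneh}) is exactly what produces the explicit $F_Y^{(1)}$ of (\ref{distr_somma}); in particular one checks that the normalising constant $1/[1-(1-\lambda/\mu)^{\alpha/\lambda}]$ appearing for $\lambda<\mu$ equals $1/p_0$, so that $F_Y^{(1)}$ is a genuine distribution function of a positive random variable $Y$ with density $f_Y=g/p_0$.

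With $G=p_0F_Y^{(1)}$ in hand, I would solve (\ref{eq:eqpot}), i.e.\ $p(0,\cdot)=(1-G)-(d-1)\,g*p(0,\cdot)$ where $*$ denotes convolution on $[0,t]$, by the resolvent series of the Volterra kernel:
\[
p(0,t)=\sum_{n=0}^{+\infty}(1-d)^n\,\bigl(g^{*n}*(1-G)\bigr)(t),
\]
with the convention $g^{*0}*f=f$. (An equivalent route is to Laplace-transform the equation, obtaining $\tilde p=(1-\tilde g)/[\theta(1+(d-1)\tilde g)]$, and to expand the geometric factor; both yield the same series.) The crucial computation is the evaluation of each term. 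Writing $\mathbf 1$ for the constant function $1$, one has $G=g*\mathbf 1$ and $g^{*n}=p_0^{\,n}f_Y^{*n}$, where $f_Y^{*n}$ is the density of $Y_1+\cdots+Y_n$; hence $g^{*n}*\mathbf 1$ integrates $g^{*n}$ and equals $p_0^{\,n}F_Y^{(n)}(t)$, while $g^{*n}*G=g^{*(n+1)}*\mathbf 1=p_0^{\,n+1}F_Y^{(n+1)}(t)$. Combining these, for every $n\geq 0$,
\[
\bigl(g^{*n}*(1-G)\bigr)(t)=p_0^{\,n}F_Y^{(n)}(t)-p_0^{\,n+1}F_Y^{(n+1)}(t),
\]
with the convention $F_Y^{(0)}\equiv 1$ on $[0,\infty)$.

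Substituting this into the resolvent series and re-indexing the two resulting sums ($m=n+1$ in the second), the series collapses: the $n=0$ contribution of the first sum supplies the constant $1$, and the remaining terms combine, since $(1-d)-1=-d$, into $\sum_{n\geq1}\bigl((1-d)^n-(1-d)^{n-1}\bigr)p_0^{\,n}F_Y^{(n)}(t)=-d\sum_{n\geq1}(1-d)^{n-1}p_0^{\,n}F_Y^{(n)}(t)$. Recalling $p_0=1$ for $\lambda\geq\mu$ and $p_0=1-(\tfrac{\mu-\lambda}{\mu})^{\alpha/\lambda}$ for $\lambda<\mu$ then yields precisely the two cases of (\ref{p0}).

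The main obstacle I anticipate is twofold and concentrated in analytic bookkeeping rather than in any deep idea. First, correctly simplifying $h(t,0;\lambda,\mu)$ so as to read off both $p_0$ and the explicit $F_Y^{(1)}$ of (\ref{distr_somma}), and verifying that the latter is a bona fide distribution function (monotone, with the right limits), is where the three regimes $\lambda>\mu$, $\lambda=\mu$, $\lambda<\mu$ must be treated separately. Second, the term-by-term manipulation of the resolvent series must be justified: since $Y>0$ almost surely, $F_Y^{(n)}(t)\to 0$ faster than geometrically in $n$ for each fixed $t$ (indeed $F_Y^{(n)}(t)$ is dominated by $(Ct)^n/n!$ whenever $f_Y$ is locally bounded), so the Neumann series converges absolutely and uniformly on compact $t$-intervals and the interchange of summation with convolution, together with the telescoping reindexing, is legitimate for every $d\in\mathbb N$. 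Everything after these two points is routine.
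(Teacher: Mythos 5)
Your argument is correct, and it reaches \eqref{p0}--\eqref{distr_somma} by a route that is genuinely leaner than the one in the paper. Both proofs ultimately rest on the same two ingredients --- the decomposition $G=p_0F_Y^{(1)}$ with $p_0=1$ for $\lambda\geq\mu$ and $p_0=1-\bigl(\frac{\mu-\lambda}{\mu}\bigr)^{\alpha/\lambda}$ for $\lambda<\mu$, and a geometric expansion of the resolvent of the Volterra equation \eqref{eq:eqpot} --- but the paper carries out the expansion in the Laplace domain. There, ${\cal L}_s[1-G]$ must be computed explicitly in each regime (a generalized exponential integral for $\lambda=\mu$, Gauss hypergeometric functions via Gradshteyn--Ryzhik 3.197.3 otherwise), the geometric series is organized in powers of $1-\frac1d$ rather than $1-d$, each power is then binomially re-expanded so that it inverts to $F_Y^{(j)}$, and a final interchange of a double sum (summing $\sum_{n\geq j-1}\binom{n+1}{j}(1-\frac1d)^n$ in closed form) produces the stated coefficients $-d(1-d)^{j-1}$. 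Your time-domain Neumann series in powers of $(1-d)$, combined with the observation $g^{*n}*(1-G)=p_0^nF_Y^{(n)}-p_0^{n+1}F_Y^{(n+1)}$ and the telescoping $(1-d)^n-(1-d)^{n-1}=-d(1-d)^{n-1}$, arrives at the same coefficients in one step and dispenses with all of the special-function computations; the only analytic input is the elementary simplification of $h(t,0;\lambda,\mu)$ from \eqref{funzioneh}, which you carry out correctly, and the factorial bound $F_Y^{(n)}(t)\leq (Ct)^n/n!$ that justifies absolute convergence and the rearrangement for every $d$ and every $t\geq 0$. What the paper's heavier route buys in exchange is the collection of explicit Laplace transforms ${\cal L}_s[H(t)]$, which it reuses in Section \ref{section:4} to derive the asymptotics via the Tauberian theorem; your proof does not produce those as a byproduct, but as a proof of Theorem \ref{theorem1} it is complete.
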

\par
The proof of Theorem \ref{theorem1} is given in \ref{appendix-A}.
\begin{remark}
{\rm
The right-hand-side of Eq.\ (\ref{distr_somma}), in each of the three cases, identifies
with the distribution function of suitable transformations of a random variable, say $Z$,
having Pareto type II (Lomax) distribution with shape and scale  parameters $\tilde{\alpha}$
and $\tilde{\lambda}$, respectively.  Namely, \\
$\bullet$ \ if $\lambda>\mu$, then $F_Y^{(1)}(t)$ is the distribution function of  $\log{(Z+1)}/(\lambda-\mu)$,
with $\tilde{\alpha}=\alpha/\lambda$ and $\tilde{\lambda}=(\lambda-\mu)/\lambda$, \\
$\bullet$ \ if $\lambda=\mu$,  then $F_Y^{(1)}(t)$ is the distribution function of $Z$ for
$\tilde{\alpha}=\alpha/\lambda$ and $\tilde{\lambda}=1/\lambda$, \\
$\bullet$ \ if $\lambda<\mu$,  then $F_Y^{(1)}(t)$ is the distribution function of $-\log{(1-Z)}/(\mu-\lambda)$,
assuming that $Z$ has support $(0,1)$ and parameters $\tilde{\alpha}=\alpha/\lambda$
and $\tilde{\lambda}=(\mu-\lambda)/\lambda$.
}
\end{remark}
%
%
\section{Transient analysis}\label{section:3}
In this section, for $t$ ranging over specified intervals of $\mathbb R$, we obtain explicit expressions for 
the generating function $F(z,t)$ and for the transient probability $p(0,t)$ that the habitat is empty. 
We also study the cumulative probability $P(k,t)$, i.e.\ the probability that the habitat is occupied 
by $k$ individuals (irrespective of their species) at time $t$. We consider 2 cases:  \\
1.  Immigration, birth and death rates are equal ($\alpha=\lambda=\mu$). \\
2.  Immigration and birth rates are equal, the death rate is different  ($\alpha=\lambda$ 
and $\mu\neq \lambda$).
\subsection{Transient analysis for $\alpha=\lambda=\mu$}
Aiming to obtain an expression for $p(0,t)$ when  $\alpha=\lambda=\mu$, 
let us denote by $t_{n,k}$ the number of permutations of $\{1,\ldots,n\}$, $n\geq 1$, 
with $k\geq 1$ components (see, for instance, Comtet \cite{Comtet}, p.\ $262$ and \cite{OEIS}). 
Alternatively, $t_{n,k}$ is the
number of permutations of $\{1,\ldots,n\}$ with $k-1$ global descents. Permutations with one component,
i.e.\ $t_{n,1}$, are known as  indecomposable permutations (we recall that a permutation is called
indecomposable if its one-line notation cannot be split into two parts such that every number in the first
part is smaller than every number in the second part). Noting that $t_{n,k}=0$ if $n<k$, an implicit 
recursion formula for $t_{n,k}$ is given by (see Propositions 2.4 and 2.7 of \cite{HegMart2014}) 
\begin{equation}
t_{n,k}=\left\{
 \begin{array}{ll}
 n!-\displaystyle{\sum_{j=1}^{n-1} (n-j)!\cdot t_{j,1}}, & k=1, \\[0.6cm]
\displaystyle{\sum_{j=1}^{n-k+1} t_{j,1}\cdot t_{n-j,k-1}}, & 2\leq k\leq n.
 \end{array}
 \right.
\label{numberstnk}
\end{equation}
\begin{proposition}\label{prop1}
Let $\alpha=\lambda=\mu$. If $0<t<1/\lambda$ the integral equation (\ref{eq:eqpot}) admits
the following solution:
\begin{equation}
 p(0,t)=1+\sum_{n=1}^{+\infty}\frac{(-\lambda t)^n}{n!}\sum_{j=1}^n t_{n,j}\, d^j.
 \label{solp0}
\end{equation}
\end{proposition}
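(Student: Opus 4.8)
The plan is to solve the integral equation (\ref{eq:eqpot}) by a convergent power-series ansatz and then to recognise the resulting coefficients as the combinatorial numbers $t_{n,j}$. First I would specialise (\ref{eq:eqpot})--(\ref{eq_def1menoGt}) to $\mu=\alpha=\lambda$. Since then $\alpha/\lambda=1$, formula (\ref{funzioneh}) gives $h(t,0;\lambda,\lambda)=1/(1+\lambda t)$, hence $G(t)=\lambda t/(1+\lambda t)$, $1-G(t)=1/(1+\lambda t)$ and $G'(s)=\lambda/(1+\lambda s)^2$, so that the integral equation becomes
\[
p(0,t)=\frac{1}{1+\lambda t}-(d-1)\lambda\int_{0}^{t}\frac{p(0,y)}{[1+\lambda(t-y)]^{2}}\,{\rm d}y,\qquad 0<t<\tfrac1\lambda .
\]
For $0<t<1/\lambda$ each kernel admits a convergent geometric expansion, namely $1/(1+\lambda t)=\sum_{n\ge0}(-\lambda t)^n$ and $\lambda/(1+\lambda s)^2=\sum_{m\ge0}(-1)^m(m+1)\lambda^{m+1}s^m$, so I look for a solution $p(0,t)=\sum_{n\ge0}c_n t^n$ with $c_0=1$.

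Next I would insert the ansatz and match powers of $t$, using $\int_0^t(t-y)^m y^k\,{\rm d}y=m!\,k!/(m+k+1)!$ to evaluate the convolution integral. The coefficient of $t^n$ ($n\ge1$) yields
\[
c_n=(-\lambda)^n-(d-1)\sum_{k=0}^{n-1}(-1)^{\,n-1-k}\lambda^{\,n-k}\,\frac{(n-k)!\,k!}{n!}\,c_k .
\]
Substituting $c_n=\dfrac{(-\lambda)^n}{n!}\,b_n$, so that $b_0=1$, all signs and powers of $\lambda$ cancel and this collapses to the clean triangular recursion
\[
b_n=n!+(d-1)\sum_{k=0}^{n-1}(n-k)!\,b_k,\qquad n\ge1,\quad b_0=1 .
\]
This determines the $b_n$ uniquely, so it suffices to check that $b_n=\sum_{j=1}^n t_{n,j}\,d^j$ solves it; this is exactly the claim (\ref{solp0}).

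The heart of the argument is this identification, which I would carry out with formal generating functions built on the component decomposition of permutations. Writing $I(x)=\sum_{n\ge1}t_{n,1}x^n$ and $C(x)=\sum_{n\ge0}n!\,x^n$, the two branches of the recursion (\ref{numberstnk}) translate into $C(x)-1=I(x)C(x)$ (case $k=1$) and $\sum_{n\ge k}t_{n,k}x^n=I(x)\sum_{n\ge k-1}t_{n,k-1}x^n$ (case $2\le k\le n$); the latter gives, by induction on $k$, $\sum_{n\ge k}t_{n,k}x^n=I(x)^k$, while the former gives $C(x)=1/(1-I(x))$. Hence the ordinary generating function of the candidate sequence is
\[
B(x):=\sum_{n\ge0}b_n x^n=1+\sum_{k\ge1}d^k I(x)^k=\frac{1}{1-d\,I(x)}=\frac{C(x)}{d-(d-1)C(x)} .
\]
On the other hand, multiplying the recursion for $b_n$ by $x^n$ and summing over $n\ge0$ gives $B(x)=C(x)+(d-1)(C(x)-1)B(x)$, i.e.\ $B(x)=C(x)/[\,d-(d-1)C(x)\,]$, the same rational expression in $C$. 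All of these are identities of formal power series, each coefficient involving only finitely many terms, so the divergence of $C(x)$ is irrelevant; the two sequences therefore coincide, establishing $b_n=\sum_{j=1}^n t_{n,j}d^j$.

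Finally I would address convergence, which is what confines the statement to $0<t<1/\lambda$. Setting $\beta_n:=b_n/n!$, the recursion becomes $\beta_n=1+(d-1)\sum_{k=0}^{n-1}\beta_k/\binom{n}{k}$, and since $\sum_{k=1}^{n-1}1/\binom{n}{k}\to0$ one verifies that $\beta_n$ stays bounded (in fact $\beta_n\to d$); consequently $|c_n|=\beta_n\lambda^n$ grows subexponentially with rate $\lambda$, so $\sum_n c_n t^n$ converges for $0<t<1/\lambda$, where the term-by-term differentiation and integration used above are legitimate and the formal computation is an honest solution. As a check, $d=1$ gives $b_n=\sum_j t_{n,j}=n!$, whence $p(0,t)=\sum_n(-\lambda t)^n=1/(1+\lambda t)=h(t,0;\lambda,\lambda)$, as it must for the ordinary birth--death--immigration process. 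The step I expect to be most delicate is the generating-function identification in the third paragraph --- in particular justifying the two derivations of $B(x)$ purely formally through the divergent series $C(x)$ --- with the boundedness of $\beta_n$ needed for convergence as the secondary technical point.
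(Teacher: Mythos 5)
Your proposal is correct, and its skeleton coincides with the paper's: specialising (\ref{eq:eqpot}) to $\alpha=\lambda=\mu$, expanding the kernel $\lambda/[1+\lambda(t-y)]^2$ in powers of $\lambda(t-y)$, and extracting a triangular recursion for the coefficients of a power-series ansatz. The paper packages this via the homotopy perturbation method, where the $n$-th power of the embedding parameter $q$ collects exactly your coefficient $c_n t^n$, and the recursion (\ref{eq:qnrecursive}) is, after the substitution $q_n(x)=\frac{(-\lambda x)^n}{n!}b_n$, your $b_n=n!+(d-1)\sum_{k=0}^{n-1}(n-k)!\,b_k$. Where you genuinely diverge is the identification $b_n=\sum_{j=1}^n t_{n,j}d^j$: the paper proves this by a strong induction that manipulates the recursion (\ref{numberstnk}) directly through the double-sum computations (\ref{eq:qn1})--(\ref{eq:qn2}), whereas you pass to formal ordinary generating functions, using $\sum_{n\geq k}t_{n,k}x^n=I(x)^k$ and $C(x)=1/(1-I(x))$ to show that both sequences have OGF $C(x)/[d-(d-1)C(x)]$. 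Your route is shorter and more transparent (the inversion is legitimate since $d-(d-1)C$ has constant term $1$, and every coefficient identity is finite, so the divergence of $C$ is harmless as you note); the paper's induction has the advantage of staying entirely at the level of the numbers $t_{n,k}$ and of being the template reused verbatim for the harder case $\mu\neq\lambda$ in Proposition \ref{p0LneqM}. You also supply a convergence argument on $(0,1/\lambda)$ via the boundedness of $b_n/n!$, a point the paper leaves implicit; only trivial blemishes remain in your write-up (the Beta integral should read $\int_0^t(t-y)^m y^k\,{\rm d}y=t^{m+k+1}m!\,k!/(m+k+1)!$, which is clearly what you used when matching powers of $t$).
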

\begin{proof}
The proof is based on the coupling of the homotopy perturbation method and the expansion
of the involved functions as Taylor series (see Biazar and Eslami \cite{BiazarEslami11}).
From (\ref{eq:eqpot}), for $\alpha=\lambda=\mu$, we can construct the following homotopy
\begin{equation}
H(p,q)=p(0,t)-\frac{1}{(1+\lambda t)}+\lambda (d-1) q \int_{0}^t
\frac{p(0,y)}{[1+\lambda (t-y)]^2}{\rm d}y=0,
\label{eq:homotopy1}
\end{equation}
with the embedding parameter $q$. By assuming that $p(0,t)=\sum_{n=0}^{+\infty} q_n(t) q^n$
and substituting functions $\frac{1}{(1+\lambda t)}$ and $\frac{1}{[1+\lambda (t-y)]^2}$ by their
Taylor series forms, in agreement with Eq.\ (\ref{eq:homotopy1}) we define
\begin{equation}
\widetilde H(p,q)=\sum_{n=0}^{+\infty} q_n(t) q^n-\sum_{n=0}^{+\infty} (-\lambda t)^n q^n+\lambda (d-1)
\sum_{n=0}^{+\infty} q^{n+1} \int_{0}^t \alpha_n(y,t) {\rm d}y=0,
\label{eq:homotopy2}
\end{equation}
for $0<t<1/\lambda$, with
\begin{equation}
\alpha_n(y,t)=\sum_{k=0}^n q_k(y)(n-k+1) [-\lambda(t-y)]^{n-k}.
\label{eq:alfa}
\end{equation}
Hence, equating the coefficients of the terms with identical powers of $q$, we find that
function $q_n(x)$ is solution of the following recursive equation:
\begin{equation}
q_n(x)=(-\lambda x)^n-\lambda (d-1) \int_{0}^{x} \alpha_{n-1}(y,x) {\rm d}y,
\qquad n\in\mathbb{N}^+,
\label{eq:qnrecursive}
\end{equation}
with $q_0(x)=1$. By direct calculations, from Eq.\ (\ref{eq:qnrecursive}) one immediately gets
$$
 q_1(x)=-d \lambda  x.
$$
Hereafter we make use of the strong induction principle to show that
\begin{equation}
q_n(x)=\frac{(-\lambda x)^n}{n!}\sum_{j=1}^n t_{n,j}\, d^j,
\qquad n\in\mathbb{N}^+.
\label{eq:qn}
\end{equation}
Being $t_{k,k}=1$ for all $k\geq 1$ (see \cite{Comtet}, p.\ $262$), Eq.\ (\ref{eq:qn}) holds for
$n=1$. Assuming that  (\ref{eq:qn}) holds for all $k=1,2,\ldots, n$ we now
prove that it holds true for $k=n+1$.
From Eq.\ (\ref{eq:alfa}), due to the induction hypothesis, we have
$$
\int_{0}^{x} \alpha_{n}(y,x) {\rm d}y=x \frac{(-\lambda x)^{n}}{(n+1)!} \sum_{j=1}^{n} (n+1-j)! \sum_{r=1}^{j} t_{j,r}\, d^r
+x (-\lambda x)^{n}, \qquad n\in\mathbb{N}^+.
$$
Hence, recalling Eq.\ (\ref{eq:qnrecursive}) and using  $t_{k,k}=1$ $\forall k\geq 1$, we obtain
\begin{eqnarray}
&& \hspace*{0.cm}
q_{n+1}(x)=\frac{(-\lambda x)^{n+1}}{({n+1})!} \left\{d \,({n+1})!
+(d-1) \sum_{j=1}^{n} (n+1-j)! \sum_{r=1}^{j} t_{j,r}\, d^r  \right\}
\label{eq:qn1}
\\
&& \hspace*{1.2cm}
=\frac{(-\lambda x)^{n+1}}{(n+1)!} \left\{ d \left[(n+1)!-\sum_{j=1}^{n} (n+1-j)!\,t_{j,1}\right]+d^{n+1}
\right.
\nonumber
\\
&& \hspace*{1.3cm}
\left.
+\sum_{s=2}^{n} d^s \left[(n-s+2)! +\sum_{j=s}^{n} (n+1-j)! (t_{j,s-1}-t_{j,s})\right]
\right\}.
\nonumber
\end{eqnarray}
Recalling Eq.\ (\ref{numberstnk}), we note that
$$
\sum_{j=s}^{n} (n+1-j)! \cdot t_{j,s}
=\sum_{r=s-1}^{n-1} t_{r,s-1} \sum_{j=r+1}^{n} (n+1-j)! \cdot t_{j-r,1}.
$$
Hence, repeated applications of Eq.\ (\ref{numberstnk}) yield
\begin{eqnarray}
&& \hspace*{-0.9cm}
(n-s+2)! t_{s-1,s-1}+\sum_{j=s}^{n} (n+1-j)! (t_{j,s-1}-t_{j,s})
\nonumber
\\
&& \hspace*{-0.6cm}
=(n-s+2)!+\sum_{r=s}^{n-1} t_{r,s-1} \Bigg[(n+1-r)!-\sum_{j=r+1}^{n} (n+1-j)! t_{j-r,1}\Bigg]
+\,t_{n,s-1}-\sum_{j=s}^{n} (n+1-j)! t_{j-s+1,1}
\nonumber
\\
&& \hspace*{-0.6cm}
=(n-s+2)!+\sum_{r=s}^{n} t_{r,s-1}\cdot t_{n+1-r,1}-\sum_{j=s}^{n} (n+1-j)! t_{j-s+1,1}
\nonumber
\\
&& \hspace*{-0.6cm}
=t_{n-s+2,1}+\sum_{r=s}^{n} t_{r,s-1}\cdot t_{n+1-r,1}=t_{n+1,s}.
\label{eq:qn2}
\end{eqnarray}
From Eqs.\ (\ref{eq:qn1}) and (\ref{eq:qn2}) we thus obtain Eq.\ (\ref{eq:qn}).
Finally, by taking $q=1$ in assumption $p(0,t)=\sum_{n=0}^{+\infty} q_n(t) q^n$ we get Eq.\ (\ref{solp0}).
\end{proof}
\begin{proposition}\label{prop2}
If $\alpha=\lambda=\mu$, then for $0<t<1/\lambda$ we have
\begin{eqnarray}
&&  \hspace*{-0.8cm}
F(z,t)=\frac{1}{1+\lambda t(1-z)}-\frac{\lambda t (d-1)(1-z)}{1+\lambda t (1-z)}
\left[1+\sum_{n=1}^{+\infty}\frac{(-\lambda t)^n}{n!}\sum_{j=1}^n t_{n,j}\, d^j\right]
-\frac{(d-1)(1-z)}{[1+\lambda t (1-z)]^2}
\nonumber
\\
&& \hspace*{0.4cm}
\times \sum_{n=1}^{+\infty}n \frac{(-\lambda t)^{n+1}}{(n+1)!} 
\,{}_2F_1\left(1,n+1;n+2;1-\frac{1}{1+\lambda t (1-z)}\right) \sum_{j=1}^n t_{n,j}\, d^j,
\label{funzioneF}
\end{eqnarray}
where
\begin{equation}
 {}_{2}F_{1}(a,b;c;z)=\sum_{n=0}^{+\infty} \frac{(a)_n(b)_n}{(c)_n}\,\frac{z^n}{n!}
\label{Gaussipergeomfunction}
\end{equation}
is the Gauss hypergeometric function. (Here, and in the remainder of the paper,
$(d)_n=d(d+1)(d+2)\cdots(d+n-1)$, $n\geq 1$, denotes the Pochhammer symbol, with $(d)_0=1$ for $d\neq 0$.)
\end{proposition}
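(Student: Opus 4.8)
The plan is to start from the explicit solution (\ref{soluzioneFgenerale})--(\ref{funzioneh}) of Proposition \ref{propFzt}, specialized to $\mu=\alpha=\lambda$. Since then $\alpha/\lambda=1$, the kernel collapses to $H(t)=h(t,z;\lambda,\lambda)=1/[1+\lambda t(1-z)]$, whose time derivative is $H'(s)=-\lambda(1-z)/[1+\lambda s(1-z)]^2$. Writing $\beta:=\lambda(1-z)$ for brevity, (\ref{soluzioneFgenerale}) reads
\[
F(z,t)=\frac{1}{1+\beta t}-(d-1)\,\beta\int_{0}^{t}\frac{p(0,y)}{[1+\beta(t-y)]^2}\,{\rm d}y ,
\]
so the first line of (\ref{funzioneF}) is immediate, and everything reduces to evaluating the convolution integral by inserting the series (\ref{solp0}) for $p(0,y)$ from Proposition \ref{prop1}.

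First I would substitute (\ref{solp0}) and interchange summation and integration, which is legitimate on the interval $0<t<1/\lambda$ where (\ref{solp0}) converges and, since $\beta\geq 0$, the kernel stays bounded by $1$. This reduces the task to the single family of integrals $I_n:=\int_{0}^{t}y^n\,[1+\beta(t-y)]^{-2}\,{\rm d}y$, $n\geq0$. The constant term of (\ref{solp0}) produces $I_0$, and an elementary primitive gives $I_0=t/(1+\beta t)$; this already accounts for the factor $-(d-1)\beta t/(1+\beta t)=-(d-1)\lambda t(1-z)/[1+\lambda t(1-z)]$ multiplying the leading $1$ inside the bracket on the second line of (\ref{funzioneF}).

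Next I would put $I_n$ (for $n\geq1$) in closed form. The substitution $y=ts$ followed by $\sigma=1-s$ turns $I_n$ into the Euler-type integral $t^{n+1}\int_0^1(1-\sigma)^n(1+\beta t\sigma)^{-2}\,{\rm d}\sigma$, which by the integral representation of (\ref{Gaussipergeomfunction}) equals $\frac{t^{n+1}}{n+1}\,{}_{2}F_{1}(2,1;n+2;-\beta t)$. A Pfaff transformation, sending the argument to $w:=\beta t/(1+\beta t)=1-1/[1+\lambda t(1-z)]$, yields
\[
I_n=\frac{t^{n+1}}{(n+1)(1+\beta t)^2}\,{}_{2}F_{1}(2,n+1;n+2;w).
\]
The decisive step is the contiguous reduction of the first parameter from $2$ to $1$: comparing the two power series and using $k+1=(n+1+k)-n$ one gets ${}_{2}F_{1}(2,n+1;n+2;w)=(n+1)/(1-w)-n\,{}_{2}F_{1}(1,n+1;n+2;w)$, and since $1/(1-w)=1+\beta t$ this splits $I_n$ as $I_n=t^{n+1}/(1+\beta t)-n\,t^{n+1}(n+1)^{-1}(1+\beta t)^{-2}\,{}_{2}F_{1}(1,n+1;n+2;w)$.

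Finally I would reassemble the two resulting series. The algebraic piece $t^{n+1}/(1+\beta t)$ of $I_n$ contributes $-(d-1)\beta t(1+\beta t)^{-1}\frac{(-\lambda t)^n}{n!}\sum_{j}t_{n,j}d^j$, which, together with the $I_0$ term ($n=0$), collects exactly into the second line of (\ref{funzioneF}), the bracket being precisely $p(0,t)$ of (\ref{solp0}); the hypergeometric piece of $I_n$ reproduces the third line. I expect the main obstacle to be the hypergeometric bookkeeping: selecting the transformation that delivers exactly the argument $1-1/[1+\lambda t(1-z)]$ and the parameter triple $(1,n+1;n+2)$ appearing in (\ref{funzioneF}), and performing the contiguous reduction so that the algebraic remainders telescope back into $p(0,t)$ rather than into some other equivalent closed form; by comparison, the interchange and convergence checks on $0<t<1/\lambda$ are routine but should be stated.
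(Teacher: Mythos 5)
Your proposal is correct and follows essentially the same route as the paper: substitute the series (\ref{solp0}) for $p(0,y)$ into the convolution (\ref{soluzioneFgenerale}), evaluate the termwise integrals in Gauss-hypergeometric form, and apply a contiguous relation to land on the parameter triple $(1,n+1;n+2)$ while the algebraic remainders reassemble into $p(0,t)$. The only difference is cosmetic: you reach the target via the Euler integral representation, a Pfaff transformation and a series-level reduction of ${}_2F_1(2,n+1;n+2;\cdot)$, whereas the paper first records the integrals through ${}_2F_1(1,n;n+1;\cdot)$ and then invokes the standard relations (\ref{relGaussIperg2}) and ${}_{2}F_{1}(a,b;b;z)=(1-z)^{-a}$.
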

\begin{proof}
From Eqs.\ (\ref{soluzioneFgenerale}), (\ref{funzioneh}) and (\ref{solp0}), we obtain
\begin{eqnarray*}
&&   \hspace*{-0.6cm}
F(z,t)=\frac{1}{1+\lambda t(1-z)}-\frac{\lambda t (d-1)(1-z)}{1+\lambda t (1-z)}
-(d-1) \sum_{n=1}^{+\infty}\frac{(-\lambda t)^n}{n!}\sum_{j=1}^n t_{n,j}\, d^j
\nonumber
\\
&& \hspace*{+0.4cm}
+\frac{(d-1)}{1+\lambda t(1-z)} \sum_{n=1}^{+\infty}\frac{(-\lambda t)^n}{n!}
\,{}_2F_1\left(1,n;n+1;1-\frac{1}{1+\lambda t (1-z)}\right)\sum_{j=1}^n t_{n,j}\, d^j.
\end{eqnarray*}
Hence, making use of Eqs.\ $15.2.25$ and $15.1.8$ of \cite{Abram1994},
%
after some calculations we come to Eq.\ (\ref{funzioneF}). 
\end{proof}
\par
We now obtain probability (\ref{pgrandekt}) under the assumptions of
Propositions \ref{prop1} and \ref{prop2}.
\begin{proposition}\label{prop:Pktlugm}
If $\alpha=\lambda=\mu$, for $0<t<1/\lambda$ we have
\begin{eqnarray}
&& \hspace*{-0.5cm}
P(k,t)
=\frac{(\lambda t)^k}{(\lambda t+1)^{k+1}}\left\{d+(d-1) \sum_{n=1}^{+\infty} \frac{(-\lambda t)^{n}}{n!}
\sum_{j=1}^n t_{n,j}\, d^j \right.
\nonumber 
\\
&& \hspace*{0.6cm}
\left.
\times \sum_{r=0}^{+\infty} \frac{(n)_r}{(n+1)_r}\left(1-\frac{1}{1+\lambda t}\right)^r
\,{}_2F_1\left(-r,-k;1;-\frac{1}{\lambda t}\right)\right\}.
\label{probunione}
\end{eqnarray}
\end{proposition}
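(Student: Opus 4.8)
The plan is to read off $P(k,t)$ directly from the generating function of Proposition~\ref{prop2}: by the definition (\ref{FPgrande}), for $k\geq 1$ the quantity $P(k,t)$ is the coefficient of $z^k$ in $F(z,t)$, so I would apply the coefficient functional $[z^k]$ to each of the three summands of (\ref{funzioneF}) and recombine. Throughout I would abbreviate $w=1+\lambda t(1-z)$ and $\beta=\lambda t/(1+\lambda t)$, so that $w=(1+\lambda t)(1-\beta z)$ and the elementary expansions $[z^k]\,w^{-1}=(\lambda t)^k/(1+\lambda t)^{k+1}$ and, more generally, $[z^k]\,w^{-p}=\binom{p+k-1}{k}\beta^k/(1+\lambda t)^p$ are available. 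Note that $\beta=1-1/(1+\lambda t)$, which is exactly the base raised to the power $r$ in (\ref{probunione}).

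For the first two summands of (\ref{funzioneF}) the extraction is elementary. From the two coefficients above and $[z^k]\big((1-z)w^{-1}\big)=-(\lambda t)^{k-1}/(1+\lambda t)^{k+1}$ one finds that the first summand together with the constant part (the ``$1$'') of the bracket in the second summand contributes $d\,(\lambda t)^k/(\lambda t+1)^{k+1}$, which is the leading term inside the braces of (\ref{probunione}). The remaining part of the second summand contributes $(d-1)\,(\lambda t)^k/(\lambda t+1)^{k+1}\sum_n\frac{(-\lambda t)^n}{n!}\sum_j t_{n,j}d^j$, and this is precisely the $r=0$ term of the inner $r$-sum, since for $r=0$ one has $(n)_0/(n+1)_0=1$ and ${}_2F_1(0,-k;1;\cdot)=1$.

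The third summand is where the work lies, and it produces the terms $r\geq 1$. Because $(1)_r=r!$, the definition (\ref{Gaussipergeomfunction}) gives the reduction ${}_2F_1(1,n+1;n+2;u)=\sum_{m\geq0}\frac{n+1}{n+m+1}u^m$; inserting $u=1-1/w=\lambda t(1-z)/w$ turns the factor $\frac{1-z}{w^2}u^m$ into $(\lambda t)^m(1-z)^{m+1}/w^{m+2}$, so the whole task reduces to the coefficient identity
\begin{equation}
[z^k]\,\frac{(1-z)^{s}}{\big[1+\lambda t(1-z)\big]^{s+1}}
=\frac{(\lambda t)^k}{(1+\lambda t)^{k+s+1}}\,
{}_2F_1\!\left(-s,-k;1;-\frac{1}{\lambda t}\right),\qquad s\geq 1 .
\label{keylemma-plan}
\end{equation}
I would prove (\ref{keylemma-plan}) by expanding $(1-z)^s=\sum_i\binom{s}{i}(-1)^iz^i$ and $(1-\beta z)^{-(s+1)}=\sum_m\binom{s+m}{m}\beta^m z^m$, collecting $[z^k]$ into a terminating sum, and then using the relation $1+\lambda t=\lambda t+1$ (i.e.\ $a=1+b$ with $a=1+\lambda t$, $b=\lambda t$) to recast that binomial sum as the terminating hypergeometric ${}_2F_1(-s,-k;1;-1/\lambda t)=\sum_i\binom{s}{i}\binom{k}{i}(-1/\lambda t)^i$.

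Granting (\ref{keylemma-plan}), I would apply it with $s=m+1$ and set $r=m+1\geq 1$: the prefactor $n\,\frac{(-\lambda t)^{n+1}}{(n+1)!}$ of the third summand combines with $\frac{n+1}{n+m+1}=\frac{n+1}{n+r}$, with the power $(\lambda t)^m$, and with the overall sign $-(d-1)$ to yield exactly the factor $\frac{n}{n+r}=(n)_r/(n+1)_r$, the power $\beta^{r}=\big(1-1/(1+\lambda t)\big)^{r}$, the common prefactor $(\lambda t)^k/(1+\lambda t)^{k+1}$, and the hypergeometric factor ${}_2F_1(-r,-k;1;-1/\lambda t)$, that is, the terms $r\geq 1$ of (\ref{probunione}). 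Merging these with the $r=0$ term found above reconstitutes the full inner sum and gives (\ref{probunione}). The main obstacle is the coefficient identity (\ref{keylemma-plan}) together with the careful bookkeeping that folds the elementary contributions of the first two summands into the single $r$-sum; a secondary point, already guaranteed by the restriction $0<t<1/\lambda$ (so $\lambda t<1$) used in Proposition~\ref{prop1}, is the absolute convergence that justifies interchanging the summations over $n$, $m$ and the coefficient extraction.
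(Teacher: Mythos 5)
Your proposal is correct and follows essentially the same route as the paper: both extract the coefficient of $z^k$ from the expression (\ref{funzioneF}), expand ${}_2F_1(1,n+1;n+2;1-1/w)$ as $\sum_m \frac{n+1}{n+m+1}(1-1/w)^m$, and rely on the coefficient identity $[z^k]\,(1-z)^s[1+\lambda t(1-z)]^{-(s+1)}=\frac{(\lambda t)^k}{(1+\lambda t)^{k+s+1}}\,{}_2F_1(-s,-k;1;-1/\lambda t)$ to produce the terminating hypergeometric factors. The only cosmetic difference is that you state and prove this key identity explicitly and read off $[z^k]$ directly for $k\geq 1$, whereas the paper leaves the identity implicit and instead uses the integral equation (\ref{eq:eqpot}) to recognize the constant term as $p(0,t)$ before equating powers of $z$.
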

\begin{proof}
From Eq.\ (\ref{funzioneF}), and recalling (\ref{solp0}) and 
(\ref{Gaussipergeomfunction}), we have
\begin{eqnarray*}
&& \hspace*{-0.9cm}
F(z,t)=\frac{1}{1+\lambda t (1-z)}
-(d-1) p(0,t)\frac{\lambda t (1-z)}{1+\lambda t(1-z)}
\\
&& \hspace*{-0.2cm}
-\frac{(d-1)(1-z)}{[1+\lambda t (1-z)]^2}
\sum_{k=0}^{+\infty} \left[1-\frac{1}{1+\lambda t(1-z)}\right]^k 
\sum_{n=1}^{+\infty} \frac{(-\lambda t)^{n+1}}{(n-1)! (n+k+1)}\sum_{j=1}^n t_{n,j}\, d^j
\\
&& \hspace*{-0.2cm}
=p(0,t)+\sum_{m=0}^{+\infty}z^m \frac{(\lambda t)^m}{(\lambda t+1)^{m+1}}
-p(0,t)\left[\frac{1+d \lambda t}{1+\lambda t}-(d-1) \sum_{m=1}^{+\infty}z^m \frac{(\lambda t)^m}{(\lambda t+1)^{m+1}} \right]\\
&& \hspace*{-0.2cm}
-(d-1) \sum_{m=0}^{+\infty} z^m \frac{(\lambda t)^{m}}{(\lambda t+1)^{m+2}}
\sum_{n=1}^{+\infty} \frac{(-\lambda t)^{n+1}}{(n-1)!} \sum_{j=1}^n t_{n,j}\, d^j
\sum_{k=0}^{+\infty}\frac{\left(\frac{\lambda t}{\lambda t+1}\right)^k }{n+k+1}\,
{}_2F_1\left(-k-1,-m;1;-\frac{1}{\lambda t}\right).
\end{eqnarray*}
Hence, since $p(0,t)$ satisfies the integral equation (\ref{eq:eqpot}), it results
\begin{eqnarray*}
&& \hspace*{-1.cm}
F(z,t)=p(0,t)+\sum_{m=1}^{+\infty}z^m \frac{(\lambda t)^m}{(\lambda t+1)^{m+1}}
+p(0,t) (d-1) \sum_{m=1}^{+\infty}z^m \frac{(\lambda t)^m}{(\lambda t+1)^{m+1}} \\
&& \hspace*{0.2cm}
+(d-1) \sum_{m=1}^{+\infty} z^m \frac{(\lambda t)^{m+1}}{(\lambda t+1)^{m+2}}
\sum_{n=1}^{+\infty} \frac{(-\lambda t)^{n}}{(n-1)!} \sum_{j=1}^n t_{n,j}\, d^j
\\
&& \hspace*{0.2cm}
\times \sum_{k=0}^{+\infty}\frac{(n+1)_k}{(n+2)_k\, (n+1)} \left(\frac{\lambda t}{\lambda t+1}\right)^k \,
{}_2F_1\left(-k-1,-m;1;-\frac{1}{\lambda t}\right).
\end{eqnarray*}
Finally, recalling Eq.\ (\ref{FPgrande}) and equating the coefficients
of the terms with identical powers of $z$ we obtain Eq.\ (\ref{probunione}).
\end{proof}
%
\begin{figure}[h] 

\begin{center}
\epsfxsize=6.5cm
\epsfbox{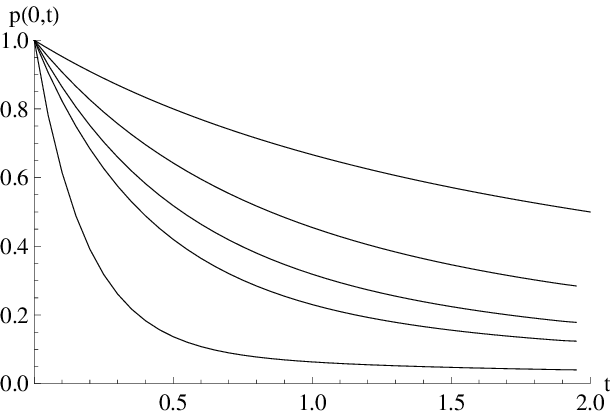}
$\;$
\epsfxsize=6.5cm
\epsfbox{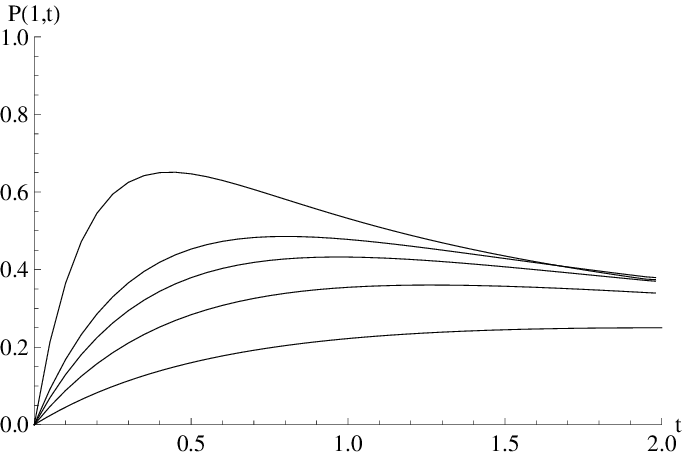}

\caption{Plots of $p(0,t)$ and $P(1,t)$ for $\lambda=0.5$, $\mu=0.5$ and $\alpha=0.5$,
for $d=1,2,3,4,10$, from top to bottom for $p(0,t)$, and from bottom to top for $P(1,t)$.}
\label{fig:LugualeM}
\end{center}
\end{figure}
%
\par
In Figure \ref{fig:LugualeM} we show some plots obtained by means of the expressions given in
Proposition \ref{prop1} and Proposition \ref{prop:Pktlugm}.
\subsection{Transient analysis for $\alpha=\lambda$ and $\mu\neq \lambda$}
In order to investigate the case $\alpha=\lambda$ and  $\mu\neq \lambda$,
we set for brevity 
\begin{equation}
 Q_{j,m}\equiv Q_{j,m}\left(\frac{\mu}{\lambda}\right)
 :=\sum_{\substack{s_1,\ldots,s_j\geq 2\\ s_1+\cdots+s_j=m}}
 A_{s_1}\left(\frac{\mu}{\lambda}\right)\times \cdots \times A_{s_j}\left(\frac{\mu}{\lambda}\right),
\label{eq:defsum}
\end{equation}
where $A_n(t)$ are the Eulerian Polynomials (see, for instance, Foata \cite{Foata} or Hirzebruch \cite{Hi2008}).
\begin{proposition}\label{p0LneqM}
If $\alpha=\lambda$ and $\mu\neq \lambda$, then for $0<t<\log(\mu/\lambda)/(\mu-\lambda)$ the integral
equation (\ref{eq:eqpot}) admits the following solution
\begin{eqnarray}
&& \hspace*{-1cm}
p(0,t)=1-d \left\{1-\frac{\mu-\lambda}{\mu-\lambda {\rm e}^{-(\mu-\lambda)t}}-\frac{1}{d-1}
\left[{\rm e}^{-\lambda t (d-1)}-1+\lambda t (d-1)\right]
\right.
\nonumber
\\
&& \hspace*{0.1cm}
\left.
-\sum_{n=3}^{\infty} \frac{(-\lambda t)^n}{n!}
\sum_{k=1}^{n-2} (d-1)^k
\sum_{j=1}^{n-k-1} {k+1 \choose j} Q_{j,j+n-k-1}
\right\}.
\label{solp0LneqM}
\end{eqnarray}
\end{proposition}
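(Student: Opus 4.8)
The plan is to turn the transcendental data of the Volterra equation (\ref{eq:eqpot}) into a power series with Eulerian-polynomial coefficients. Setting $r=\mu/\lambda$ and using (\ref{funzioneh}) with $\alpha/\lambda=1$, I would first simplify
\[
h(t,0;\lambda,\mu)=\frac{(\lambda-\mu)\,{\rm e}^{-(\lambda-\mu)t}}{\lambda-\mu\,{\rm e}^{-(\lambda-\mu)t}}=\frac{\mu-\lambda}{\mu-\lambda\,{\rm e}^{-(\mu-\lambda)t}}=\frac{r-1}{r-{\rm e}^{-(r-1)\lambda t}},
\]
and then recognize the last expression as the exponential generating function of the Eulerian polynomials, $\sum_{n\ge0}A_n(r)\,x^n/n!=(r-1)/(r-{\rm e}^{(r-1)x})$, evaluated at $x=-\lambda t$. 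This yields the clean identity $h(t,0;\lambda,\mu)=\sum_{n\ge0}A_n(\mu/\lambda)(-\lambda t)^n/n!$. Its radius of convergence in $t$ is fixed by the singularity ${\rm e}^{-(r-1)\lambda t}=r$, i.e. by $t=\log(\mu/\lambda)/(\mu-\lambda)$, which is exactly the interval in the statement and the region in which every rearrangement below is absolutely convergent. Writing $f:=h(\cdot,0;\lambda,\mu)=1-G$ by (\ref{eq_def1menoGt}) and $\phi:=f'$, the equation (\ref{eq:eqpot}) reads $p(0,\cdot)=f+(d-1)\,\phi*p(0,\cdot)$, where $*$ denotes convolution on $[0,t]$.

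Next I would solve this convolution equation through its Neumann series $p(0,\cdot)=\sum_{j\ge0}(d-1)^j\,\phi^{*j}*f$, whose convergence on any finite interval is automatic for a Volterra kernel. The decisive simplification is the identity $f=1+1*\phi$ (with $1$ the constant function), valid since $f(0)=1$. Substituting and reindexing collapses the series to
\[
p(0,t)=1+d\sum_{j\ge1}(d-1)^{j-1}\bigl(\phi^{*j}*1\bigr)(t),
\]
which already accounts for the overall prefactor $d$ and the powers $(d-1)^{j-1}$ seen in (\ref{solp0LneqM}). I would then split $\phi=-\lambda+\psi$, isolating the constant $\phi_0=-\lambda A_1(\mu/\lambda)=-\lambda$, so that $\psi(t)=\sum_{s\ge2}A_s(\mu/\lambda)(-\lambda)^s\,t^{s-1}/(s-1)!$ carries precisely the Eulerian indices $s\ge2$ occurring in the definition (\ref{eq:defsum}).

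The computational heart is to expand $\phi^{*j}=(-\lambda+\psi)^{*j}=\sum_{i=0}^{j}\binom{j}{i}(-\lambda)^{*(j-i)}*\psi^{*i}$ by the convolution binomial theorem and to evaluate each factor via $\int_0^t\frac{(t-y)^p}{p!}\frac{y^q}{q!}\,{\rm d}y=\frac{t^{p+q+1}}{(p+q+1)!}$. This gives $(-\lambda)^{*p}(t)=(-\lambda)^p\,t^{p-1}/(p-1)!$ and, crucially,
\[
\psi^{*i}(t)=\sum_{m\ge2i}(-\lambda)^m\,Q_{i,m}\!\left(\frac{\mu}{\lambda}\right)\frac{t^{m-1}}{(m-1)!},
\]
since the sum of products of $i$ Eulerian polynomials with indices $s_1,\dots,s_i\ge2$ and $s_1+\cdots+s_i=m$ is exactly $Q_{i,m}$. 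Convolving with $(-\lambda)^{*(j-i)}$ and the final $1$, and extracting the coefficient of $(-\lambda t)^n/n!$, the $i\ge1$ contributions should produce $d\sum_k(d-1)^k\binom{k+1}{i}Q_{i,\,i+n-k-1}$ after the relabelling $k=j-1$ (so the inner index $i$ plays the role of the $j$ in (\ref{solp0LneqM})); matching the ranges $1\le i\le n-k-1$ and $k\le n-2$ reproduces the triple sum. The slice $k=0$ of this part equals $dA_n(\mu/\lambda)$, i.e. $-d$ times the coefficient of $G$, which is why $G(t)$ appears in closed form; the pure-constant terms $i=0$ resum over $j$ into $\frac{1}{d-1}[{\rm e}^{-\lambda(d-1)t}-1]$, and once its linear part is reconciled against the $n=1$ contribution one obtains the displayed term $\frac{1}{d-1}[{\rm e}^{-\lambda t(d-1)}-1+\lambda t(d-1)]$.

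The main obstacle is exactly this bookkeeping: proving $\psi^{*i}=Q_{i,\cdot}$ so that the constraint $s_\ell\ge2$ is preserved under each convolution, tracking the index shifts so that $Q_{i,m}$ emerges precisely as $Q_{j,\,j+n-k-1}$, and checking that the boundary pieces ($i=0$ and $k=0$) assemble into the closed $G(t)$ and exponential-plus-polynomial terms rather than being double-counted. A more mechanical alternative, parallel to the proof of Proposition \ref{prop1}, would be to set up a homotopy for (\ref{eq:eqpot}), write $p(0,t)=\sum_n q_n(t)\,q^n$, and verify the claimed coefficients by strong induction using the Eulerian recurrences; this trades the delicate rearrangement for a longer induction, but the convolution route sketched above is what I would write up in full.
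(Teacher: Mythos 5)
Your argument is correct, and it reaches (\ref{solp0LneqM}) by a genuinely different route than the paper. The common starting point is the same: the recognition that $1-G(t)$ is the exponential generating function of the Eulerian polynomials evaluated at $-\lambda t$ (the paper's Eq.~(\ref{eq_serie})), with the convergence interval determined by the nearest singularity. From there the paper runs the homotopy perturbation method: it writes $p(0,t)=\sum_n q_n(t)q^n$, obtains the recursion (\ref{eq:qnrecursive2}) in which $q_n(t)$ is proportional to $t^n$, \emph{guesses} the closed form (\ref{eq:qnLdiffM}), and verifies it by strong induction, the key step being the identity $\sum_{r} A_r(\mu/\lambda)\,Q_{j,n+j-h-r}=Q_{j,n+j-h-1}+Q_{j+1,n+j-h}$ combined with Pascal-type manipulation of the binomial coefficients. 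You instead expand the resolvent (Neumann) series of the Volterra equation, organized by the number $j$ of kernel iterations rather than by powers of $t$, collapse it to $1+d\sum_{j\ge1}(d-1)^{j-1}\bigl(\phi^{*j}*1\bigr)$ via $f=1+1*\phi$, and compute $\phi^{*j}$ exactly by the convolution binomial theorem after splitting off the constant $-\lambda A_1(\mu/\lambda)=-\lambda$; the identification of $\psi^{*i}$ with the generating function of $Q_{i,\cdot}$ is immediate from (\ref{eq:defsum}). I checked that your coefficient extraction (with $k=j-1$ and your inner index playing the role of $j$ in (\ref{solp0LneqM})) reproduces the paper's $q_n$ exactly, including the boundary slices $k=0$ (the $A_n$ term, resumming to the closed form of $1-G$) and $k=n-1$ (the $(d-1)^{n-1}$ term, resumming to the exponential), as well as the order-$t$ reconciliation you flag. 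Your route buys conceptual transparency --- it explains a priori why the answer is built from the $Q_{j,m}$ (Taylor coefficients of convolution powers of the tail of $G'$) and where the $\binom{k+1}{j}$ come from, with no induction and no guessed ansatz --- while the paper's route parallels Proposition \ref{prop1} verbatim and confines the combinatorics to one verifiable identity. What remains in your write-up is the bookkeeping you yourself identify; none of the steps fails.
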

\begin{proof}
The proof proceeds similarly as that of Proposition \ref{prop1}. Recalling Eq.\ (\ref{eq_def1menoGt}),
for $0<t<\log(\mu/\lambda)/(\mu-\lambda)$ we have
\begin{equation}
1-G(t)=\frac{\frac{\mu}{\lambda}-1}{\frac{\mu}{\lambda}-{\rm e}^{-\lambda t \left(\frac{\mu}{\lambda}-1\right)}}
=\sum_{n=0}^{\infty} A_n\left(\frac{\mu}{\lambda}\right) \frac{(-\lambda t)^n}{n!}.
\label{eq_serie}
\end{equation}
The radius of convergence of the power series in Eq.\ (\ref{eq_serie}) has been determined finding the
location (in the complex plane) of the singularity nearest to the origin.
We can construct the following homotopy
\begin{eqnarray}
&& \hspace*{-1.6cm}
H(p,q)=\sum_{n=0}^{+\infty} q_n(t) q^n-\sum_{n=0}^{+\infty} A_n\left(\frac{\mu}{\lambda}\right)
\frac{(-\lambda t)^n}{n!} q^n
\nonumber
\\
&& \hspace*{-0.2cm}
+\lambda (d-1)
\sum_{n=0}^{+\infty} q^{n+1} \sum_{j=0}^{n} A_{j+1}\left(\frac{\mu}{\lambda}\right)
\int_{0}^t \frac{(-\lambda y)^j}{j!} q_{n-j}(t-y) {\rm d}y=0,
\label{eq:homotopy3}
\end{eqnarray}
where $q$ is the embedding parameter and we have set $p(0,t)=\sum_{n=0}^{+\infty} q_n(t) q^n$.
We thus find that $q_n(t)$ satisfies the following recursive equation:
\begin{equation}
q_n(t)=A_n\left(\frac{\mu}{\lambda}\right)\frac{(-\lambda t)^n}{n!}
-\lambda (d-1) \sum_{j=0}^{n-1} A_{j+1}\left(\frac{\mu}{\lambda}\right)
\int_{0}^{t} \frac{(-\lambda y)^j}{j!} q_{n-1-j}(t-y) {\rm d}y.
\label{eq:qnrecursive2}
\end{equation}
By straightforward calculations, from Eq.\ (\ref{eq:qnrecursive2}) one immediately gets
\begin{equation}
q_0(t)=1,\qquad q_1(t)=-d\lambda  t,\qquad q_2(t)=\lambda d (\mu+\lambda d)\frac{t^2}{2}.
\label{eq:q0andq1}
\end{equation}
Let us now make use of the strong induction principle to show that, for $n\geq 3$, 
\begin{equation}
 q_n(t)=d \,\frac{(-\lambda t)^n}{n!}\bigg\{ \sum_{k=1}^{n-2} (d-1)^k
 \sum_{j=1}^{n-k-1} {k+1 \choose j} Q_{j,j+n-k-1}  
 +A_{n}\left(\frac{\mu}{\lambda}\right)+(d-1)^{n-1}\bigg\}.
\label{eq:qnLdiffM}
\end{equation}
%
By direct calculations, it follows from 
(\ref{eq:qnrecursive2}) and (\ref{eq:q0andq1}) that
$$
q_3(t)=-\lambda d[\lambda^2 d^2+2\lambda \mu(d+1)+\mu^2]\frac{t^3}{3!},
$$
which is equal to Eq. (\ref{eq:qnLdiffM}) for $n=3$, being $A_3(z)=1+4 z+z^2$.
Let us consider $n\geq 3$ and assume that Eq.\ (\ref{eq:qnLdiffM}) holds for all $r=2,\ldots,n-1$. We shall
prove that identity (\ref{eq:qnLdiffM}) holds also for $r=n$.
From Eq.\ (\ref{eq:qnrecursive2}), recalling (\ref{eq:q0andq1}) we have
\begin{eqnarray*}
&& \hspace*{-2cm}
q_n(t)=d A_n\left(\frac{\mu}{\lambda}\right)\frac{(-\lambda t)^n}{n!}
+d (d-1) A_{n-1}\left(\frac{\mu}{\lambda}\right)\frac{(-\lambda t)^n}{n!}
\nonumber
\\
&& \hspace*{-1cm}
-\lambda (d-1) \sum_{r=2}^{n-1} A_{n-r}\left(\frac{\mu}{\lambda}\right)
\int_{0}^{t} \frac{(-\lambda y)^{n-1-r}}{(n-1-r)!} q_{r}(t-y) {\rm d}y.
\end{eqnarray*}
Hence, due to the induction hypothesis (\ref{eq:qnLdiffM}), we obtain
\begin{eqnarray}
\label{eq:qn4}
&& \hspace*{-1.cm}
q_n(t)=d \frac{(-\lambda t)^n}{n!}\bigg\{ A_n\left(\frac{\mu}{\lambda}\right)
+ (d-1) A_{n-1}\left(\frac{\mu}{\lambda}\right)
\\
&& \hspace*{-0.1cm}
+(d-1) \sum_{r=2}^{n-1} A_{n-r}\left(\frac{\mu}{\lambda}\right)A_{r}\left(\frac{\mu}{\lambda}\right)
+\sum_{r=2}^{n-1} (d-1)^{r} A_{n-r}\left(\frac{\mu}{\lambda}\right)
\nonumber
\\
&& \hspace*{-0.1cm}
+ \sum_{r=3}^{n-1} A_{n-r}\left(\frac{\mu}{\lambda}\right)
\sum_{k=1}^{r-2} (d-1)^{k+1}
\sum_{j=1}^{r-k-1} {k+1 \choose j}  Q_{j,j+r-k-1} \bigg\}
\nonumber
\\
&& \hspace*{-0.1cm}
=d \frac{(-\lambda t)^n}{n!}\left\{ A_n\left(\frac{\mu}{\lambda}\right)
+\sum_{r=1}^{n-1} (d-1)^r A_{n-r}\left(\frac{\mu}{\lambda}\right)
\right.
\nonumber
\\
&& \hspace*{-0.1cm}
\left.
+ \sum_{h=1}^{n-2} (d-1)^h
\sum_{j=1}^{n-1-h} {h \choose j}
\sum_{r=1}^{n-j-h} A_{r}\left(\frac{\mu}{\lambda}\right) Q_{j,n+j-h-r} \right\}.
\nonumber
\end{eqnarray}
Noting that
$$
 \sum_{r=1}^{n-j-h} A_{r}\left(\frac{\mu}{\lambda}\right) Q_{j,n+j-h-r}
 = Q_{j,n+j-h-1}+ Q_{{j+1},n+j-h},
$$
from Eq.\ (\ref{eq:qn4}) we obtain
\begin{eqnarray}
&& \hspace*{-0.4cm}
q_n(t)=d \frac{(-\lambda t)^n}{n!}\left\{ A_n\left(\frac{\mu}{\lambda}\right)
+\sum_{r=1}^{n-1} (d-1)^{n-r} A_{r}\left(\frac{\mu}{\lambda}\right)
\right.
\nonumber
\\
&& \hspace*{0.cm}
 + \sum_{r=2}^{n-1} (d-1)^{n-r}
 \sum_{k=1}^{r-1} {n-r \choose k} Q_{k,r+k-1}
\left. 
 + \sum_{r=2}^{n-1} (d-1)^{n-r}
 \sum_{k=2}^{r} {n-r \choose k-1} Q_{k,r+k-1}
\right\}
\nonumber
\\
&& \hspace*{-0.2cm}
=d \frac{(-\lambda t)^n}{n!}\left\{ A_n\left(\frac{\mu}{\lambda}\right)
+\sum_{r=1}^{n-1} (d-1)^{n-r} A_{r}\left(\frac{\mu}{\lambda}\right)
\right.
\nonumber
\\
&& \hspace*{0.cm}
+ \sum_{r=2}^{n-1} (d-1)^{n-r} (n-r) A_{r}\left(\frac{\mu}{\lambda}\right)
\left.
+ \sum_{r=2}^{n-1} (d-1)^{n-r}
\sum_{k=2}^{r-1} {n-r+1 \choose k} Q_{k,r+k-1}
\right\}
\nonumber
\\
&& \hspace*{-0.2cm}
=d \frac{(-\lambda t)^n}{n!}\left\{ A_n\left(\frac{\mu}{\lambda}\right)+(d-1)^{n-1}
+ \sum_{r=2}^{n-1} (d-1)^{n-r}  \sum_{k=1}^{r-1} {n-r+1 \choose k} Q_{k,r+k-1}
\right\},
\nonumber
\end{eqnarray}
which gives Eq.\ (\ref{eq:qnLdiffM}). By setting $q=1$ in assumption
$p(0,t)=\sum_{n=0}^{+\infty} q_n(t) q^n$, and recalling (\ref{eq:qnLdiffM}), we finally obtain
Eq.\ \eqref{solp0LneqM}.
\end{proof}
\begin{remark}\rm
If $d=1$ the expressions for $p(0,t)$ given in Theorem \ref{theorem1}, Proposition \ref{prop1} and
Proposition \ref{p0LneqM} are in agreement with the well-known results for the linear birth-death process
with immigration (see, for instance, Section 2.3 of Nucho \cite{Nu81}).
\end{remark}
\par
Hereafter we derive an explicit expression for $t_{n,k}$ in terms of multinomial coefficients,
for $n\geq 2$ and $1\leq k\leq n$. It is worth pointing out that a closed form expression
for such numbers does not appear to have been obtained before.
\begin{corollary}
The following equalities hold for $n\geq 2$:
\begin{eqnarray*}
&& \hspace*{-0.7cm}
t_{n,1}=n!+(-1)^{n-1}+\sum_{k=1}^{n-2} (-1)^k
\sum_{j=1}^{n-k-1} {k+1 \choose j} (n-k-1+j)!
\sum_{\substack{s_1,\ldots,s_j\geq 2\\s_1+\cdots+s_j=n-k-1+j}}
\frac{1}{{n-k-1+j\choose s_{1},\ldots,s_{j}}};
\\
&& \hspace*{-0.7cm}
t_{n,k}={n-1\choose k-1}(-1)^{n-k}+\sum_{r=k-1}^{n-2}{r \choose k-1} (-1)^{r-k+1}
\sum_{j=1}^{n-r-1} {r+1 \choose j} 
\\
&& \hspace*{0.1cm}
\times (n-r-1+j)!\sum_{\substack{s_1,\ldots,s_j\geq 2\\s_1+\cdots+s_j=n-r-1+j}}
\frac{1}{{n-r-1+j\choose s_{1},\ldots,s_{j}}},
\hspace{2.5cm} 2\leq k\leq n-1;
\\
&& \hspace*{-0.7cm}
t_{n_,n}=1.
\end{eqnarray*}
\end{corollary}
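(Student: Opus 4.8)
The plan is to read off all three displayed formulas from a single coefficient identity obtained by equating the two independent expansions of $p(0,t)$ that are already available in the regime $\alpha=\lambda$. On one side, Proposition~\ref{prop1} gives, for $\mu=\alpha=\lambda$, the expansion \eqref{solp0}, so that the coefficient of $\frac{(-\lambda t)^n}{n!}$ in $p(0,t)$ is the polynomial $\sum_{j=1}^n t_{n,j}\,d^j$. On the other side, the recursion behind Proposition~\ref{p0LneqM} produces the same coefficient in Eulerian form: by \eqref{eq:qnLdiffM}, for $n\geq 3$ it equals $d\big\{\sum_{k=1}^{n-2}(d-1)^k\sum_{j=1}^{n-k-1}\binom{k+1}{j}Q_{j,j+n-k-1}+A_n(\mu/\lambda)+(d-1)^{n-1}\big\}$, with the cases $n=1,2$ supplied by \eqref{eq:q0andq1}. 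Since $q_n(t)$ and the quantities $A_n(\mu/\lambda)$, $Q_{j,m}(\mu/\lambda)$ are polynomials in $\mu/\lambda$, this is a polynomial identity that may be specialized to $\mu/\lambda=1$; equivalently, both expansions are power-series coefficients of the one analytic function $p(0,t)$, and such coefficients are unique. Either way, at $\mu=\lambda$ the two expressions for the coefficient of $\frac{(-\lambda t)^n}{n!}$ must coincide.

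The next step is to evaluate the Eulerian data at $\mu/\lambda=1$. The Eulerian polynomials satisfy $A_n(1)=n!$ (consistent with $A_3(1)=6$ from $A_3(z)=1+4z+z^2$), and the definition \eqref{eq:defsum} gives $Q_{j,m}(1)=\sum_{s_1+\cdots+s_j=m,\,s_i\geq2}s_1!\cdots s_j!$. Writing the multinomial coefficient as $\binom{m}{s_1,\ldots,s_j}=m!/(s_1!\cdots s_j!)$ converts this into $Q_{j,m}(1)=m!\sum_{s_1+\cdots+s_j=m,\,s_i\geq2}\binom{m}{s_1,\ldots,s_j}^{-1}$, which is exactly the reciprocal-multinomial sum in the statement, with $m=n-k-1+j$. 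Substituting $A_n(1)=n!$ and this form of $Q_{j,m}(1)$ turns the master identity into an equality of two polynomials in $d$:
$$\sum_{j=1}^n t_{n,j}\,d^j=d\Big\{n!+(d-1)^{n-1}+\sum_{r=1}^{n-2}(d-1)^r\sum_{j=1}^{n-r-1}\binom{r+1}{j}(n-r-1+j)!\sum_{\substack{s_1+\cdots+s_j=n-r-1+j\\ s_i\geq2}}\binom{n-r-1+j}{s_1,\ldots,s_j}^{-1}\Big\}.$$

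It then remains to compare the coefficients of $d^k$ on the two sides. The left side contributes $t_{n,k}$. On the right, expanding $d(d-1)^r=\sum_{i=0}^r\binom{r}{i}(-1)^{r-i}d^{i+1}$ shows that each factor $d(d-1)^r$ contributes $\binom{r}{k-1}(-1)^{r-k+1}$ to the coefficient of $d^k$, valid for $r\geq k-1$. Collecting the three groups of terms — the $d\,n!$ term (only $d^1$), the $d(d-1)^{n-1}$ term (coefficient $\binom{n-1}{k-1}(-1)^{n-k}$ of $d^k$), and the double sum (with $r$ running from $k-1$ to $n-2$) — yields precisely the stated expression for $2\leq k\leq n-1$, while the boundary extractions give the $k=1$ line (only the $n!$ and the $(-1)^{n-1}$ from $(d-1)^{n-1}$ persist alongside the double sum over $r=1,\ldots,n-2$) and $t_{n,n}=1$ (only $d(d-1)^{n-1}$ reaches degree $d^n$, with coefficient $1$). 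The case $n=2$ is read directly off \eqref{eq:q0andq1} and is consistent with the empty sums in the formulas. The main obstacle is entirely the bookkeeping in this last step: keeping the several index shifts ($k\leftrightarrow r$ and $m=n-r-1+j$) aligned and treating the two boundary values $k=1$ and $k=n$ separately, together with the short justification that the Eulerian identity, derived under $\mu\neq\lambda$, persists at $\mu=\lambda$ by polynomiality.
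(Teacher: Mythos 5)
Your proposal is correct and follows essentially the same route as the paper: equating the two expansions of $p(0,t)$ from Propositions \ref{prop1} and \ref{p0LneqM}, letting $\mu\to\lambda$ with $A_j(1)=j!$ so that $Q_{j,m}(1)$ becomes the reciprocal-multinomial sum, and extracting coefficients of $d^k$. Your write-up is merely more explicit than the paper's one-line proof, in particular in justifying the passage to $\mu=\lambda$ by polynomiality and in handling the boundary cases $k=1$ and $k=n$.
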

\begin{proof}
The proof follows from Propositions \ref{prop1} and \ref{p0LneqM}, by letting $\mu \to \lambda$
and noting that $A_j(1)=j!$.
\end{proof}
\par
In the following proposition we obtain the probability generating function when $\mu\neq \lambda$
and $\alpha=\lambda$. In the sequel we shall denote by
\begin{equation}
 \theta_n\equiv \theta_n(\lambda,\mu,d):=\sum_{i=1}^{n-2} (d-1)^i
\sum_{j=1}^{n-i-1} {i+1 \choose j}
Q_{j,j+n-i-1} + A_{n}\left(\frac{\mu}{\lambda}\right)+(d-1)^{n-1},
\label{coeff_teta}
\end{equation}
where $Q_{j,m}$ is defined in Eq. (\ref{eq:defsum}).
\begin{proposition}\label{prop:mumaggla}
If $\alpha=\lambda$ and $\mu\neq \lambda$, for $t<\log(\mu/\lambda)/(\mu-\lambda)$, it is
\begin{eqnarray}
&& \hspace*{-2cm}
F(z,t)=1-d+\frac{d(\mu-\lambda)}{\mu-\lambda z-\lambda(1-z){\rm e}^{-(\mu-\lambda) t}}
-\frac{d (d-1)(\mu-\lambda)^2}{\mu-\lambda z}
\nonumber
\\
&& \hspace*{-0.7cm}
\times \Big[-\lambda g_1(z)+ \sum_{n=2}^{+\infty} \frac{(-\lambda)^n}{n!}g_n(z)\theta_n  \Big]
\label{funzioneF_MmagL}
\end{eqnarray}
where
$$
  g_k(z)=\left\{
  \begin{array}{l}
  \frac{1}{(\lambda-\mu)^{k+1}}\left\{\lambda (1-z) (\lambda-\mu)^{k-1} t^k+
 k!\left[{\rm Li}_{k}\left(\frac{\lambda(1-z){\rm e}^{-(\mu-\lambda) t}}{\mu-\lambda z}\right)
 \right.\right.  \\
  \left.\left.
 \qquad\qquad -{\rm Li}_{k}\left(\frac{\lambda(1-z)}{\mu-\lambda z}\right) \right]
 -\sum_{r=1}^{k-1} \frac{k!}{r!}[(\lambda-\mu)t]^r
 {\rm Li}_{k-r}\left(\frac{\lambda(1-z)}{\mu-\lambda z}\right)\right\}   \\
 \hspace{8cm} \hbox{if }\mu>\lambda, \\[0.4cm]
  \frac{1}{(\mu-\lambda)^{k+1}}\left\{(\mu-\lambda z) (\mu-\lambda)^{k-1} t^k+
 k!\left[{\rm Li}_{k}\left(\frac{(\mu-\lambda z){\rm e}^{-(\lambda-\mu) t}}{\lambda (1-z)}\right)
 \right. \right.  \\
 \qquad\qquad
 \left.\left. -{\rm Li}_{k}\left(\frac{\mu-\lambda z}{\lambda(1-z)}\right) \right]
-\sum_{r=1}^{k-1} \frac{k!}{r!}\,[(\mu-\lambda)t]^r \,{\rm Li}_{k-r}\left(\frac{\mu-\lambda z}{\lambda(1-z)}\right)
\right\} \\ 
 \hspace{8cm} \hbox{if }\mu<\lambda,
 \end{array}
 \right.
$$
and where
\begin{equation}
 {\rm Li}_{k}(z)=\sum_{j=1}^{+\infty} \frac{z^j}{j^k}
 \label{eq:defLi}
\end{equation}
is the polylogarithm function.
\end{proposition}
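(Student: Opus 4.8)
The plan is to substitute the explicit form of $p(0,t)$ from Proposition \ref{p0LneqM} into the general representation (\ref{soluzioneFgenerale}) and to evaluate the resulting convolution integral, the polylogarithms arising from a geometric expansion of the kernel. Setting $\alpha=\lambda$ in (\ref{funzioneh}), the function $H(t)=h(t,z;\lambda,\mu)$ reduces to the rational function $H(t)=\frac{\mu-\lambda}{\mu-\lambda z-\lambda(1-z){\rm e}^{-(\mu-\lambda)t}}$, so that $H(0)=1$. Comparing (\ref{coeff_teta}) with the coefficients built in the proof of Proposition \ref{p0LneqM}, one has $q_n(t)=d\,\frac{(-\lambda t)^n}{n!}\theta_n$ for $n\ge 2$, together with $q_0(t)=1$ and $q_1(t)=-d\lambda t$; hence $p(0,y)=1-d\lambda y+d\sum_{n\ge 2}\frac{(-\lambda)^n\theta_n}{n!}\,y^n$. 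Inserting this series into (\ref{soluzioneFgenerale}) and isolating the contribution of the constant term $q_0=1$, which yields $(d-1)\int_0^t H'(t-y)\,{\rm d}y=(d-1)(H(t)-1)$, gives $F(z,t)=[1-d+dH(t)]+d(d-1)\big[-\lambda J_1+\sum_{n\ge2}\frac{(-\lambda)^n\theta_n}{n!}J_n\big]$, where $J_n:=\int_0^t H'(t-y)\,y^n\,{\rm d}y$. The first bracket is exactly the rational part $1-d+\frac{d(\mu-\lambda)}{\mu-\lambda z-\lambda(1-z){\rm e}^{-(\mu-\lambda)t}}$ appearing in the statement.

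It therefore suffices to show that $J_n=-\frac{(\mu-\lambda)^2}{\mu-\lambda z}\,g_n(z)$, with $g_n$ as claimed; then the second bracket reproduces the remaining term $-\frac{d(d-1)(\mu-\lambda)^2}{\mu-\lambda z}\big[-\lambda g_1+\sum_{n\ge2}\frac{(-\lambda)^n}{n!}g_n\theta_n\big]$. First I would integrate by parts, using $H(0)=1$, to reduce $J_n$ to the moment integrals $K_m:=\int_0^t H(u)(t-u)^m\,{\rm d}u$, obtaining $J_n=-t^n+nK_{n-1}$. Then I would expand $H$ in a geometric series: for $\mu<\lambda$ one writes $H(u)=\frac{\lambda-\mu}{\mu-\lambda z}\sum_{j\ge1}\big(\frac{\mu-\lambda z}{\lambda(1-z)}\big)^{j}{\rm e}^{-(\lambda-\mu)ju}$, whereas for $\mu>\lambda$ one writes $H(u)=\frac{\mu-\lambda}{\mu-\lambda z}\sum_{j\ge0}\big(\frac{\lambda(1-z)}{\mu-\lambda z}\big)^{j}{\rm e}^{-(\mu-\lambda)ju}$. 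Since the relevant ratio $\frac{\mu-\lambda z}{\lambda(1-z)}{\rm e}^{-(\lambda-\mu)u}$ (resp.\ $\frac{\lambda(1-z)}{\mu-\lambda z}{\rm e}^{-(\mu-\lambda)u}$) has modulus below one for $u\in[0,t]$ on a suitable range of $z$, term-by-term integration is justified there and the final identity extends by analytic continuation in $z$; the restriction $t<\log(\mu/\lambda)/(\mu-\lambda)$ enters only through the convergence of the $n$-series inherited from Proposition \ref{p0LneqM}.

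Integrating each summand against $(t-u)^m$ reduces to the elementary primitive of $u^m{\rm e}^{-cu}$, with $c=(\lambda-\mu)j$ (resp.\ $(\mu-\lambda)j$); summing the geometric index $j$ against $j^{-(r+1)}$ then produces, via definition (\ref{eq:defLi}), the polylogarithms ${\rm Li}_{r+1}$ evaluated at the ratio and at the same ratio multiplied by ${\rm e}^{-(\lambda-\mu)t}$ (resp.\ ${\rm e}^{-(\mu-\lambda)t}$). Collecting these contributions yields precisely the two ${\rm Li}_k$ terms and the finite correction $\sum_{r=1}^{k-1}\frac{k!}{r!}[(\mu-\lambda)t]^r{\rm Li}_{k-r}$ in $g_k$. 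The two cases differ only through the $j=0$ term of the expansion, which is absent when $\mu<\lambda$ but contributes the extra monomial $\frac{\lambda(1-z)}{(\lambda-\mu)^2}t^k$ when $\mu>\lambda$; this is exactly what replaces the factor $\mu-\lambda z$ by $\lambda(1-z)$ in the displayed leading monomials of $g_k$.

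The main obstacle is the combinatorial bookkeeping in this last step: tracking the full Taylor polynomial generated by $\int_0^t u^m{\rm e}^{-cu}\,{\rm d}u$, reindexing the double sum so that the constant coefficients collapse into ${\rm Li}_{k-r}(\cdot)$ with the correct factorials $k!/r!$ and powers $[(\mu-\lambda)t]^r$, and verifying that the pure $t^k$ monomials produced by $-t^n$ in $J_n$ and by the $j=0$ term reassemble into $\frac{\lambda(1-z)}{(\lambda-\mu)^2}t^k$ for $\mu>\lambda$ and into $\frac{\mu-\lambda z}{(\mu-\lambda)^2}t^k$ for $\mu<\lambda$. Interchanging the $n$-summation with both the integral and the $j$-summation must also be justified on the convergence interval, but this is routine given the uniform bounds on the geometric ratio noted above.
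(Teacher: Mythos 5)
Your proposal is correct and follows the same route as the paper, whose proof is just the one-line remark that the result ``immediately follows'' from Eqs.\ (\ref{soluzioneFgenerale}), (\ref{funzioneh}) and (\ref{solp0LneqM}); you simply carry out that substitution explicitly, and your key intermediate identity $J_n=-\frac{(\mu-\lambda)^2}{\mu-\lambda z}\,g_n(z)$ checks out (e.g.\ for $n=1$, $\mu>\lambda$ it gives $J_1=\frac{1}{\mu-\lambda z}\bigl[-\lambda(1-z)t+{\rm Li}_1\bigl(\tfrac{\lambda(1-z)}{\mu-\lambda z}\bigr)-{\rm Li}_1\bigl(\tfrac{\lambda(1-z)}{\mu-\lambda z}{\rm e}^{-(\mu-\lambda)t}\bigr)\bigr]$, as required). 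The decomposition into the $q_0$ contribution plus the moment integrals, the geometric expansion of $H$, and the accounting of the $j=0$ term that distinguishes the two cases are all sound.
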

\begin{proof}
It immediately follows from Eqs. (\ref{soluzioneFgenerale}) and (\ref{funzioneh}),
recalling Eq. (\ref{solp0LneqM}).
\end{proof}
\par
We conclude this section by evaluating the probability (\ref{pgrandekt}).
\begin{proposition}\label{prop:Pkt}
Let $k\in\mathbb{N}^+$.
If $\alpha=\lambda$, for $t<\log(\mu/\lambda)/(\mu-\lambda)$, we have\\
$\bullet$ for $\mu>\lambda$
\begin{eqnarray}
&& \hspace*{-0.5cm}
P(k,t)
=\frac{\lambda^k}{\mu^{k+1}}\left\{d (\mu-\lambda)\mu^{k+1} \frac{[1-{\rm e}^{-(\mu-\lambda)t}]^k}
{[\mu-\lambda {\rm e}^{-(\mu-\lambda)t}]^{k+1}}-d (d-1)(\lambda-\mu)
\right.
\label{probunione_MmagL}
\\
&& \hspace*{0.6cm}
\left.
\times\left[-\lambda t +\sum_{n=2}^{+\infty} \frac{(-\lambda t)^n}{n!}\theta_n\right]
-\lambda d (d-1) \sum_{l=1}^{+\infty} \frac{(1-{\rm e}^{-l(\mu-\lambda)t})}{l}
{}_2F_1^*
\right.
\nonumber
\\
&& \hspace*{0.6cm}
\left. +d(d-1)\sum_{n=2}^{+\infty}\frac{(-\lambda)^n}{(\lambda-\mu)^{n-1}}\theta_n
\sum_{r=1}^{n-1}\frac{[(\lambda-\mu)t]^r}{r!}
\sum_{l=1}^{+\infty}\frac{1}{l^{n-r}}{}_2F_1^*
\right.
\nonumber
\\
&& \hspace*{0.6cm}
\left.
+d(d-1)\sum_{n=2}^{+\infty}\frac{(-\lambda)^n}{(\lambda-\mu)^{n-1}}\theta_n
\sum_{l=1}^{+\infty}\frac{1-{\rm e}^{-l (\mu-\lambda) t}}{l^{n}}{}_2F_1^*
\right\},
\nonumber
\end{eqnarray}
where ${}_2F_1^*={}_2F_1\left(-l,k+1;1;1-\frac{\lambda}{\mu}\right)$; \\
$\bullet$ for $\mu<\lambda$
\begin{eqnarray}
&& \hspace*{-0.3cm}
P(k,t)
=d (\lambda-\mu){\rm e}^{-(\lambda-\mu)t} \frac{[\lambda\,(1-{\rm e}^{-(\lambda-\mu)t})]^k}
{[\lambda-\mu {\rm e}^{-(\lambda-\mu)t}]^{k+1}}-d (d-1) \frac{\lambda}{\mu}\,{\rm Li}_{k+1}\left(\frac{\mu}{\lambda}\right)
\label{probunione_LmagM}
\\
&& \hspace*{0.9cm}
+ d(d-1) \sum_{s=1}^{+\infty} \frac{{\rm e}^{-s(\lambda-\mu)t}}{s}
{}_2F_1^{**}
\nonumber
\\
&& \hspace*{0.9cm}
+\frac{d(d-1)}{\lambda}\sum_{n=2}^{+\infty}\frac{(-\lambda)^n}{(\mu-\lambda)^{n-1}}\theta_n
\sum_{s=1}^{+\infty}\frac{(1-{\rm e}^{-s (\lambda-\mu) t})}{s^n}
{}_2F_1^{**}
\nonumber
\\
&& \hspace*{0.9cm}
+\frac{d(d-1)}{\lambda}\sum_{n=2}^{+\infty}\frac{(-\lambda)^n}{(\mu-\lambda)^{n-1}}\theta_n
\sum_{r=1}^{n-1}\frac{[(\mu-\lambda)t]^r}{r!}
\sum_{s=1}^{+\infty}\frac{1}{s^{n-r}}
{}_2F_1^{**},
\nonumber
\end{eqnarray}
where $\theta_n$ is defined in Eq. (\ref{coeff_teta}), and ${}_2F_1^{**}={}_2F_1\left(1-s,k+1;1;1-\frac{\mu}{\lambda}\right)$.
\end{proposition}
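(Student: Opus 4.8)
The plan is to read off $P(k,t)$ as the coefficient of $z^k$ in the generating function $F(z,t)$ obtained in Proposition \ref{prop:mumaggla}, since by \eqref{FPgrande} we have $P(k,t)=[z^k]F(z,t)$ for every $k\geq 1$. In the expression \eqref{funzioneF_MmagL} the additive constant $1-d$ contributes only to $p(0,t)$ and may be discarded. The rational term $d(\mu-\lambda)/[\mu-\lambda z-\lambda(1-z){\rm e}^{-(\mu-\lambda)t}]$ is, after rewriting its denominator as $[\mu-\lambda{\rm e}^{-(\mu-\lambda)t}]-\lambda z[1-{\rm e}^{-(\mu-\lambda)t}]$, a plain geometric series in $z$; extracting $[z^k]$ yields precisely the first term of \eqref{probunione_MmagL} and, after an elementary rearrangement, the first term of \eqref{probunione_LmagM}. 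Thus the whole problem reduces to expanding the third term of \eqref{funzioneF_MmagL}, i.e.\ the factor $(\mu-\lambda z)^{-1}$ times the bracket $[-\lambda g_1(z)+\sum_{n\geq2}\tfrac{(-\lambda)^n}{n!}g_n(z)\theta_n]$, into powers of $z$.

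The engine of that expansion is the defining series \eqref{eq:defLi} of the polylogarithm. Writing, for $\mu>\lambda$, $B=\lambda(1-z)/(\mu-\lambda z)$ and $A=B\,{\rm e}^{-(\mu-\lambda)t}$, every $g_k(z)$ is a finite combination of ${\rm Li}_k(A)$, ${\rm Li}_k(B)$ and lower order ${\rm Li}_{k-r}(B)$, plus the linear-in-$z$ polynomial $\lambda(1-z)(\lambda-\mu)^{k-1}t^k$. Replacing each polylogarithm by $\sum_{l\geq1}B^l/l^k$ (resp.\ with the extra weight ${\rm e}^{-l(\mu-\lambda)t}$ for $A$) and multiplying by the outer $(\mu-\lambda z)^{-1}$ turns $B^l/(\mu-\lambda z)$ into $\lambda^l(1-z)^l/(\mu-\lambda z)^{l+1}$. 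The key computational identity I would isolate and prove first is the coefficient-extraction formula
\[
[z^k]\frac{(1-z)^l}{(\mu-\lambda z)^{l+1}}=\frac{\lambda^{k-l}}{\mu^{k+1}}\,{}_2F_1\!\left(-l,k+1;1;1-\frac{\lambda}{\mu}\right),
\]
which follows by multiplying the binomial expansions of $(1-z)^l$ and $(1-\tfrac{\lambda}{\mu}z)^{-(l+1)}$ and recognising the resulting finite sum, after an Euler transformation, as the stated Gauss function (the polynomial degree $l$ of the numerator being exactly what truncates the series ${}_2F_1(-l,\cdot\,;\cdot\,;\cdot)$). This identity is what produces the factor $\lambda^k/\mu^{k+1}$ and every occurrence of ${}_2F_1(-l,k+1;1;1-\lambda/\mu)$ in \eqref{probunione_MmagL}.

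With that identity in hand I would substitute back and sort the contributions by their origin in $g_k(z)$. The linear polynomial parts $\lambda(1-z)(\lambda-\mu)^{k-1}t^k$ collapse, via $[z^k]B$, into the group $-\lambda t+\sum_{n\geq2}\tfrac{(-\lambda t)^n}{n!}\theta_n$; the difference ${\rm Li}_k(A)-{\rm Li}_k(B)$ carries the weight ${\rm e}^{-l(\mu-\lambda)t}-1$, which after collecting signs becomes the factors $1-{\rm e}^{-l(\mu-\lambda)t}$ multiplying the two $l$-series; and the lower order terms $-\tfrac{k!}{r!}[(\lambda-\mu)t]^r{\rm Li}_{k-r}(B)$ generate the remaining sum carrying $[(\lambda-\mu)t]^r/r!$ and $l^{-(n-r)}$. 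Reassembling these, together with the constants $(\mu-\lambda)^2/(\lambda-\mu)^{k+1}$ hidden in $g_k$ and the coefficients $\theta_n$ of \eqref{coeff_teta}, reproduces \eqref{probunione_MmagL}.

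For the case $\mu<\lambda$ the scheme is identical but the polylogarithm arguments are the reciprocals $B'=(\mu-\lambda z)/[\lambda(1-z)]$, so that $(B')^{s}/(\mu-\lambda z)=(\mu-\lambda z)^{s-1}/[\lambda^s(1-z)^s]$ and the relevant extraction becomes
\[
[z^k]\frac{(\mu-\lambda z)^{s-1}}{(1-z)^{s}}\ \propto\ {}_2F_1\!\left(1-s,k+1;1;1-\frac{\mu}{\lambda}\right),
\]
the shift from $-s$ to $1-s$ reflecting that dividing $(B')^{s}$ by $(\mu-\lambda z)$ lowers the numerator degree to $s-1$; this yields \eqref{probunione_LmagM}. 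The hard part throughout will be the bookkeeping: three nested summations (over the polylogarithm index, over $r$, and over $n$), the index shifts relating the powers appearing in $g_k$ to the final coefficients $\theta_n$, and the justification of interchanging the order of summation, which is legitimate precisely on the stated interval $t<\log(\mu/\lambda)/(\mu-\lambda)$ where the series \eqref{eq_serie}, and hence all the rearranged double series, converge absolutely.
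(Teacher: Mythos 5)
Your proposal is correct and follows essentially the same route as the paper: extract $[z^k]$ from the generating function of Proposition \ref{prop:mumaggla} via the polylogarithm series \eqref{eq:defLi}, the geometric expansion of the rational term, and a hypergeometric coefficient-extraction identity. Your key identity $[z^k](1-z)^l(\mu-\lambda z)^{-(l+1)}=\frac{\lambda^{k-l}}{\mu^{k+1}}\,{}_2F_1(-l,k+1;1;1-\lambda/\mu)$ is precisely (an Euler transform of) the Hansen formula $\sum_{k\ge0}\frac{(d)_k}{k!}y^k\,{}_2F_1(-k,b;c;x)=(1-y)^{-d}{}_2F_1\bigl(d,b;c;\frac{xy}{y-1}\bigr)$ that the paper cites, so the two proofs differ only in whether that lemma is derived or quoted.
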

\begin{proof}
The proof follows from Eqs. (\ref{FPgrande}) and (\ref{funzioneF_MmagL}), recalling that $p(0,t)$ satisfies
the integral equation (\ref{eq:eqpot}), noting  that (see Eq. (65.1.3) of \cite{Hansen1975}, for instance)
$$
 \sum_{k=0}^{+\infty} \frac{(d)_k}{k!} y^k {}_2F_1\left(-k,b;c;x\right)
 =(1-y)^{-d}{}_2F_1\left(d,b;c;\frac{x y}{y-1}\right),
$$
and making use of (\ref{eq:defLi}).
\end{proof}
%
%
\begin{figure}[h] 
\begin{center}
\epsfxsize=6.5cm
\epsfbox{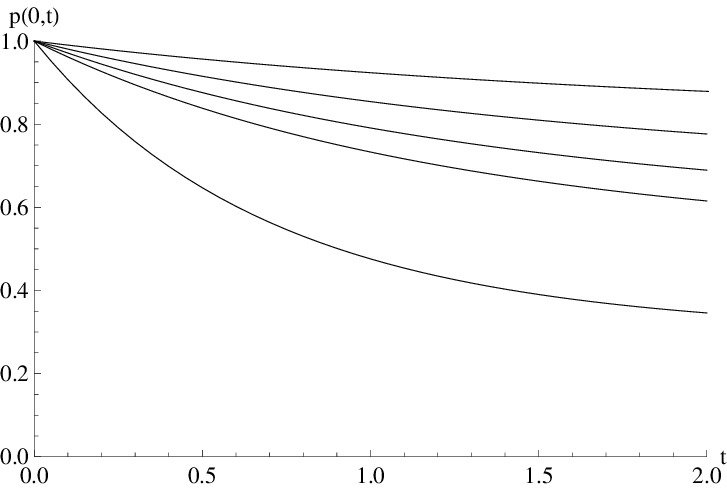}
$\;$
\epsfxsize=6.5cm
\epsfbox{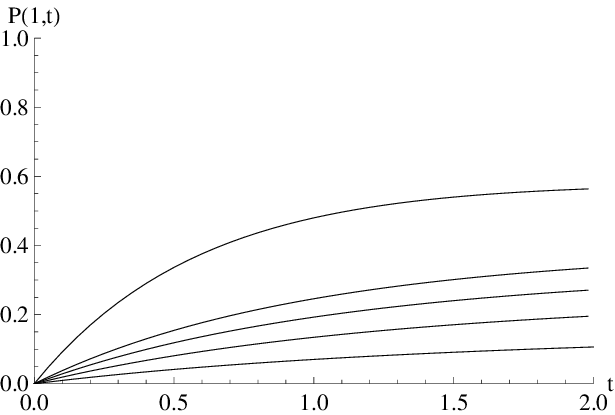}
\caption{Plots of $p(0,t)$ and $P(1,t)$ for $\lambda=0.1$, $\mu=0.5$ and $\alpha=0.1$,
for $d=1,2,3,4,10$, from top to bottom for $p(0,t)$, and from bottom to top for $P(1,t)$.}
\label{fig:LminM}
\end{center}
\end{figure}
\begin{figure}[h] 
\begin{center}
\epsfxsize=6.5cm
\epsfbox{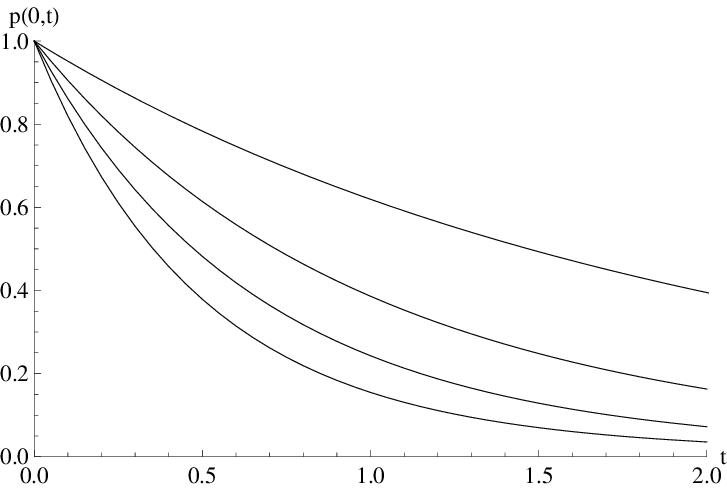}
$\;$
\epsfxsize=6.5cm
\epsfbox{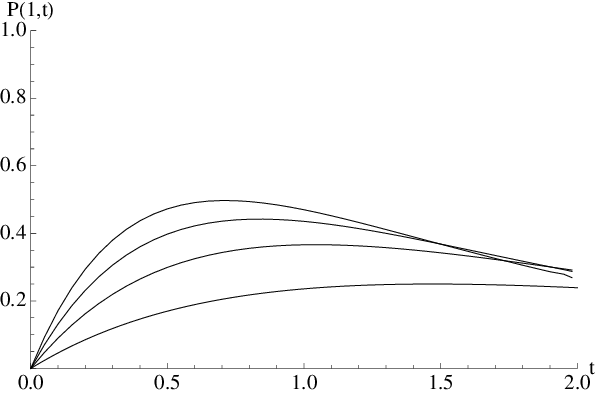}
\caption{Plots of $p(0,t)$ and $P(1,t)$ for $\lambda=0.5$, $\mu=0.1$ and $\alpha=0.5$,
for $d=1,2,3,4$, from top to bottom for $p(0,t)$, and from bottom to top for $P(1,t)$.}
\label{fig:LmagM}
\end{center}
\end{figure}
%
\par
In Figures \ref{fig:LminM} and \ref{fig:LmagM} we show some plots of $p(0,t)$ and $P(1,t)$ obtained
by evaluating the expressions given in Proposition \ref{p0LneqM} and Proposition \ref{prop:Pkt}.
\par
Let us now provide a simple relation between the polylogarithm function and a series of Gauss
hypergeometric functions, which does not appear to have been given before.
This result immediately follows from Proposition \ref{prop:Pkt}.
\begin{corollary}
For all $k\in\mathbb{N}^+$ and $x\in (0,1)$ we have
\begin{equation}
 {\rm Li}_{k+1}(x)=x\sum_{s=1}^{+\infty} \frac{1}{s} \left[ {}_2F_1\left(1-s,k+1;1;1-x\right)\right].
 \label{eq:serie}
\end{equation}
\end{corollary}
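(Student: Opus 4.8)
The plan is to read off identity (\ref{eq:serie}) from the case $\mu<\lambda$ of Proposition \ref{prop:Pkt}, specialized to the initial instant $t=0$. The starting point is formula (\ref{probunione_LmagM}), which expresses $P(k,t)$ as a sum of five contributions. Since by (\ref{eq:probiniz}) the particle is located a.s.\ in the origin at time $0$, we have $P(k,0)=0$ for every $k\geq 1$, so the right-hand side of (\ref{probunione_LmagM}) must vanish as $t\to 0^+$. The idea is then to identify which of the five terms survive in this limit.

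First I would check the vanishing terms. As $t\to0^+$ the leading term carries the factor $[\lambda(1-{\rm e}^{-(\lambda-\mu)t})]^k\to 0$ for $k\geq 1$; the double series involving the factor $1-{\rm e}^{-s(\lambda-\mu)t}$ vanishes; and the series involving, for each $r\geq 1$, the factor $[(\mu-\lambda)t]^r/r!$ vanishes as well. Hence only two contributions survive: the constant term $-d(d-1)\frac{\lambda}{\mu}{\rm Li}_{k+1}(\mu/\lambda)$ and the single series $d(d-1)\sum_{s\geq1}\frac{{\rm e}^{-s(\lambda-\mu)t}}{s}\,{}_2F_1(1-s,k+1;1;1-\mu/\lambda)$, the latter evaluated at $t=0$.

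Imposing $P(k,0)=0$ then gives
$$
\frac{\lambda}{\mu}\,{\rm Li}_{k+1}\!\left(\frac{\mu}{\lambda}\right)
=\sum_{s=1}^{+\infty}\frac{1}{s}\,{}_2F_1\!\left(1-s,k+1;1;1-\frac{\mu}{\lambda}\right),
$$
after division by $d(d-1)$, which is legitimate upon choosing any fixed $d\geq 2$ (the resulting identity is $d$-free). Finally I would set $x=\mu/\lambda$: as $\mu,\lambda$ range over the positive reals with $\mu<\lambda$, the ratio $x$ covers all of $(0,1)$, and the displayed equality rearranges into (\ref{eq:serie}).

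The step requiring the most care is the termwise passage to the limit $t\to0^+$ inside the infinite summations of (\ref{probunione_LmagM}); this is justified by the convergence of those series on the admissible interval $0<t<\log(\mu/\lambda)/(\mu-\lambda)$ up to its left endpoint, together with the fact that each ${}_2F_1(1-s,k+1;1;\cdot)$ reduces to a polynomial of degree $s-1$, since $1-s$ is a non-positive integer. If $\mathbb{N}$ is understood to contain $0$, the value $k=0$ lies outside this probabilistic argument, but it is checked directly from ${}_2F_1(1-s,1;1;1-x)=x^{s-1}$, which collapses the right-hand side of (\ref{eq:serie}) to $\sum_{s\geq1}x^s/s={\rm Li}_1(x)$.
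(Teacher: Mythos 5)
Your proposal is correct and follows exactly the paper's own route: the paper's proof is the one-line observation that \eqref{eq:serie} "follows from \eqref{probunione_LmagM}, by taking into account that $P(k,0)=0$," and you have simply filled in the details (which terms vanish at $t=0$, division by $d(d-1)$ for a fixed $d\geq 2$, the substitution $x=\mu/\lambda$, and the separate check for $k=0$). No substantive difference from the paper's argument.
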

\begin{proof}
The proof of (\ref{eq:serie}) follows from (\ref{probunione_LmagM}), by taking into account that
$P(k,0)=0$.
\end{proof}
\par
A classical problem in population birth-death models is the
extinction, i.e.\ the first passage through the zero state (see, e.g.\ 
Van Doorn and Zeifman \cite{VaDoZe2005}). However,  in our model this problem  
reduces to a well-known one-dimensional case. 
\par
We finally conclude the analysis of $\cal N$ by discussing some asymptotic results.
\section{Asymptotic results}\label{section:4}
According to the one-dimensional case, ${\cal N}$ admits a stationary distribution 
if and only if $\lambda<\mu$. In the following proposition we obtain explicitly the 
expression of the stationary probabilities 
\begin{equation}
 \rho(0):=\lim_{t\rightarrow +\infty} p(0,t),
 \qquad 
 \rho(k):=\lim_{t\rightarrow +\infty} P(k,t),
 \quad k\in\mathbb{N}^+.
\nonumber
\end{equation}
\begin{proposition}\label{stationaryprob}
If $\lambda<\mu$, then
\begin{equation}
\rho(0)
 = \frac{1}{d} \,\displaystyle{ \frac{\left(1-\frac{\lambda}{\mu}\right)^{\frac{\alpha}{\lambda}}}
 {1-\left(1-\frac{1}{d}\right)\left(1-\frac{\lambda}{\mu}\right)^{\frac{\alpha}{\lambda}}}},
\label{p0limit}
\end{equation}
\begin{equation}
\rho(k) 
=\displaystyle{\frac{\left(1-\frac{\lambda}{\mu}\right)^{\frac{\alpha}{\lambda}}}
{1-\left(1-\frac{1}{d}\right)\left(1-\frac{\lambda}{\mu}\right)^{\frac{\alpha}{\lambda}}}
\,\frac{(\frac{\alpha}{\lambda})_k}{k!}\,\left(\frac{\lambda}{\mu}\right)^k },
\qquad k\in\mathbb{N}^+.
\label{asymptPgrande}
\end{equation}
If $\lambda\geq \mu$, then $\rho(k)=0$ for $k\in\mathbb{N}_0$.
\end{proposition}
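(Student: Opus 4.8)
The plan is to exploit the convolution (renewal-type) structure already present in the two identities at hand --- the Volterra equation (\ref{eq:eqpot}) for $p(0,t)$ and the representation (\ref{soluzioneFgenerale}) for the generating function --- and to pass to the limit by means of the Laplace transform together with the final value theorem $\lim_{t\to\infty}f(t)=\lim_{s\to0^{+}}s\widehat{f}(s)$, where $\widehat{f}(s)=\int_{0}^{\infty}{\rm e}^{-st}f(t)\,{\rm d}t$. Throughout I would record from (\ref{eq_def1menoGt}) and (\ref{funzioneh}) that $G(0)=0$ and $1-G(t)=h(t,0;\lambda,\mu)$, and from (\ref{funzioneh}) that $H(0)=h(0,z;\lambda,\mu)=1$.

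First I would treat $p(0,t)$. Transforming (\ref{eq:eqpot}), noting that the last term is a convolution and that $\widehat{G'}(s)=s\widehat{G}(s)-G(0)=s\widehat{G}(s)$, gives
\[
\widehat{p_0}(s)\left[1+(d-1)\,s\widehat{G}(s)\right]=\frac{1}{s}-\widehat{G}(s).
\]
Multiplying by $s$ and letting $s\to0^{+}$ yields $\lim_{t\to\infty}p(0,t)=\dfrac{1-G(\infty)}{1+(d-1)\,G(\infty)}$, with $G(\infty)=1-\lim_{t\to\infty}h(t,0;\lambda,\mu)$. A short computation from (\ref{funzioneh}), factoring ${\rm e}^{-(\lambda-\mu)t}$ out of the bracket, shows that for $\lambda<\mu$ one has $h(\infty,0;\lambda,\mu)=\left(1-\frac{\lambda}{\mu}\right)^{\alpha/\lambda}=:\rho$, so $G(\infty)=1-\rho$; substituting and simplifying reproduces (\ref{p0limit}). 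For $\lambda\ge\mu$ the same formula gives $h(\infty,0;\lambda,\mu)=0$, i.e.\ $G(\infty)=1$, so the numerator vanishes and $p(0,t)\to0$.

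For $P(k,t)$ I would transform (\ref{soluzioneFgenerale}) in the same way. Using $\widehat{H'}(z,s)=s\widehat{H}(z,s)-1$ and multiplying by $s$, the final value theorem gives
\[
F(z,\infty)=h(\infty,z;\lambda,\mu)+(d-1)\left[h(\infty,z;\lambda,\mu)-1\right]p(0,\infty).
\]
For $\lambda<\mu$ the same factoring argument yields $h(\infty,z;\lambda,\mu)=\left(\frac{\mu-\lambda}{\mu-\lambda z}\right)^{\alpha/\lambda}=\rho\left(1-\frac{\lambda}{\mu}z\right)^{-\alpha/\lambda}$; expanding via the generalized binomial series $\left(1-\frac{\lambda}{\mu}z\right)^{-\alpha/\lambda}=\sum_{k\ge0}\frac{(\alpha/\lambda)_k}{k!}(\lambda/\mu)^{k}z^{k}$ and reading off the coefficient of $z^{k}$ for $k\ge1$ (the constant terms of $F(z,\infty)$ affect only $p(0,\infty)$) gives $P(k,\infty)=\bigl[1+(d-1)p(0,\infty)\bigr]\rho\frac{(\alpha/\lambda)_k}{k!}(\lambda/\mu)^{k}$. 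Since $1+(d-1)p(0,\infty)=\frac{d}{d-(d-1)\rho}$, this is exactly (\ref{asymptPgrande}). For $\lambda\ge\mu$ one has $h(\infty,z;\lambda,\mu)=0$ for $0\le z<1$ and $p(0,\infty)=0$, so $F(z,\infty)\equiv0$ and every $P(k,\infty)=0$.

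The delicate point, and the main obstacle, is the legitimacy of the final value theorem: it identifies $\lim_{s\to0^{+}}s\widehat{p_0}(s)$ with $\lim_{t\to\infty}p(0,t)$ only once the latter is known to exist. I would secure existence a priori from the regenerative structure of $\{N(t)\}$ --- successive returns to the origin are regeneration epochs, and for $\lambda<\mu$ (the positive-recurrent regime) the expected cycle length is finite, so a stationary law exists by the renewal theorem --- or, alternatively, by a Tauberian argument applied to $\widehat{p_0}(s)$ after checking that $1+(d-1)s\widehat{G}(s)$ stays bounded away from zero for $s>0$. Granting existence, the interchange of the $z$-expansion with the $s\to0$ limit in the generating-function step is routine, since for fixed $0\le z<1$ all the series involved converge absolutely.
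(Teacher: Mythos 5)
Your argument is correct, and for the second half of the statement it coincides with the paper's: the authors also Laplace-transform the representation (\ref{soluzioneFgenerale}) to get exactly your identity (their Eq.\ (\ref{taub1})), invoke the Tauberian relation $\lim_{t\to+\infty}F(z,t)=\lim_{s\to 0}s\,{\cal L}_s[F(z,t)]$, compute $\lim_{t\to+\infty}h(t,z;\lambda,\mu)=\big(\frac{\mu-\lambda}{\mu-\lambda z}\big)^{\alpha/\lambda}$ for $\lambda<\mu$ (and $0$ for $\lambda\geq\mu$), and expand the binomial series to read off (\ref{asymptPgrande}). Where you genuinely diverge is in the derivation of (\ref{p0limit}): the paper obtains it by letting $t\to+\infty$ termwise in the series representation (\ref{p0}) of Theorem \ref{theorem1} and summing the resulting geometric series in $(1-d)(1-\rho)$, whereas you transform the Volterra equation (\ref{eq:eqpot}) directly and pass to the limit in $s\widehat{p_0}(s)\,[1+(d-1)s\widehat{G}(s)]=1-s\widehat{G}(s)$. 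Your route is arguably cleaner: the geometric series underlying the paper's computation converges absolutely only when $(d-1)(1-\rho)<1$ (the paper itself calls (\ref{p0}) a \emph{formal} representation), so for large $d$ the termwise passage needs an extra justification that your Laplace-transform argument does not. You are also more explicit than the paper about the one real delicate point, namely that the final-value/Tauberian step identifies $\lim_{s\to 0^+}s\widehat{f}(s)$ with $\lim_{t\to\infty}f(t)$ only once existence of the latter is secured; your regenerative-process argument (finite mean return time to the origin in the subcritical regime) is a legitimate way to supply this, and the paper simply cites ``the Tauberian theorem'' without addressing it. The only step you share with the paper that is left slightly informal in both treatments is the passage from pointwise convergence of the generating function on $[0,1)$ to convergence of its individual coefficients, which is standard (bounded coefficients plus uniqueness of power-series expansions, or the continuity theorem for generating functions).
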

\begin{proof}
Eq.\ (\ref{p0limit}) follows from Theorem \ref{theorem1}. Denoting by
\begin{equation}
 {\cal L}_s[f(t)]=\int_{0}^{+\infty} {\rm e}^{-s t} f(t) {\rm d}t,\qquad s\geq 0,
 \label{eq:defLsft}
\end{equation}
the Laplace transform of an arbitrary function $f(t)$, from Proposition \ref{propFzt} we have
\begin{equation}
 {\cal L}_s[F(z,t)]= {\cal L}_s[H(t)]+(d-1)(s\,{\cal L}_s[H(t)]-1)\, {\cal L}_s[p(0,t)].
\label{taub1}
\end{equation}
Note that, due to Eq.\ (10) of Section 2.1.3, p.\ 59, of Erd\'elyi {\em et al.}\ \cite{ErMaObTr53},
$$
 {\cal L}_s[H(t)]=\left\{
 \begin{array}{ll}
 \displaystyle{\frac{\lambda (\lambda-\mu)^{\frac{\alpha}{\lambda}}
{}_{2}F_{1}(\frac{\alpha}{\lambda},\frac{\alpha}{\lambda}+\frac{s}{\lambda-\mu};\frac{\alpha}{\lambda}+\frac{s}{\lambda-\mu}+1;\frac{\mu-\lambda z}{\lambda(1-z)})}
{\left[\lambda(1-z)\right]^{\frac{\alpha}{\lambda}} \left[\lambda s+\alpha (\lambda-\mu)\right]}}, & \lambda> \mu,
\\[0.4cm]
\displaystyle{\frac{(\mu-\lambda)^{{\frac{\alpha}{\lambda}}-1}}{(\mu-\lambda z)^{\frac{\alpha}{\lambda}}}
\frac{\mu-\lambda}{s} {}_{2}F_{1}\left(\frac{\alpha}{\lambda},\frac{s}{\mu-\lambda};\frac{s}{\mu-\lambda}+1;\frac{\lambda(1-z)}{\mu-\lambda z}\right)},
& \lambda<\mu,
\\[0.4cm]
\displaystyle{\frac{{\rm e}^{\frac{s}{\lambda(1-z)}}E\left(\frac{\alpha}{\lambda},\frac{s}{\lambda(1-z)}\right) }{\lambda(1-z)}},
& \lambda=\mu,
 \end{array}
 \right.
$$
where
\begin{equation}
E(\nu,z):=\int_{1}^{+\infty} \frac{{\rm e}^{-z t}}{t^\nu} {\rm d}t,\quad \nu\in {\mathbb R},\quad z>0,
\label{expintfunction}
\end{equation}
denotes the generalized exponential integral function and ${}_{2}F_{1}$ is defined in
(\ref{Gaussipergeomfunction}). Hence, recalling the 
Tauberian theorem, and making use of Eqs.\ (\ref{p0limit}) and (\ref{taub1}), we have
\begin{equation}
 \lim_{t\to +\infty} F(z,t)=\left\{
 \begin{array}{ll}
 0, & \lambda\geq  \mu,
\\[0.2cm]
\displaystyle{\frac{(\mu-\lambda)^{\frac{\alpha}{\lambda}}}{(\mu-\lambda z)^{\frac{\alpha}{\lambda}}}\left[1+\frac{(d-1) \left(\frac{\mu-\lambda}{\mu}\right)^{\frac{\alpha}{\lambda}}}{d-(d-1)\left(\frac{\mu-\lambda}{\mu}\right)^{\frac{\alpha}{\lambda}}}\right]} 
\displaystyle -\frac{(d-1) \left(\frac{\mu-\lambda}{\mu}\right)^{\frac{\alpha}{\lambda}}}{d-(d-1)\left(\frac{\mu-\lambda}{\mu}\right)^{\frac{\alpha}{\lambda}}},
& \lambda<\mu.
 \end{array}
 \right.
\label{Fztlimit}
\end{equation}
If $\lambda<\mu$, making use of
$$
\frac{(\mu-\lambda)^{\frac{\alpha}{\lambda}}}{(\mu-\lambda z)^{\frac{\alpha}{\lambda}}}
=\sum_{k=0}^{+\infty}\frac{(\frac{\alpha}{\lambda})_k}{k!}
\left(1-\frac{\lambda}{\mu}\right)^{\frac{\alpha}{\lambda}}
\left(\frac{\lambda}{\mu}\right)^k z^k
$$
and recalling Eq.\ (\ref{FPgrande}), after some calculations we obtain (\ref{asymptPgrande})
from Eqs.\ (\ref{p0limit}) and (\ref{Fztlimit}).
\end{proof}
\begin{remark}\rm
From the stationary probabilities (\ref{p0limit}) and (\ref{asymptPgrande}) we have 
that, for $\lambda<\mu$, the following identity holds for $k\in\mathbb{N}_0$:
\begin{equation}
 \rho(k)=\vartheta_d\,\pi(k)+(1-\vartheta_d)\,{\bf 1}_{\{k=0\}},
 \qquad 
 \left(\vartheta_d=
 \left[ {1-\left(1-\frac{1}{d}\right)\left(1-\frac{\lambda}{\mu}\right)^{\frac{\alpha}{\lambda}}}\right]^{-1}\right),
\end{equation}
where $\pi(k)$ is the negative binomial distribution given by 
$$
 \pi(k)= \left(1-\frac{\lambda}{\mu}\right)^{\frac{\alpha}{\lambda}}
 \frac{(\frac{\alpha}{\lambda})_k}{k!}\,\left(\frac{\lambda}{\mu}\right)^k,
\qquad  k\in\mathbb{N}_0.
$$
We note that if $d=1$ then $\vartheta_d=1$, and thus $\rho(k)=\pi(k)$, $\forall k\in\mathbb{N}_0$. 
Moreover, $\vartheta_d$ is increasing in $d\geq 1$ and tends to a constant when $d\to +\infty$.  
\end{remark}
\par
Denoting by $N$ the discrete random variable having distribution $\{\rho(k);\; k\in \mathbb{N}_0\}$,  
after some calculations we obtain the following mean and variance, for $\lambda<\mu$:
\begin{equation}
 E[N]=\vartheta_d\,\displaystyle\frac{\alpha}{\mu-\lambda}, \qquad 
 Var[N]=\vartheta_d^2\, \displaystyle\frac{\alpha \mu}{(\mu-\lambda)^2}\left[1-\left(1-\frac{1}{d}\right)
 \left(\frac{\alpha}{\mu}+1\right)\left(1-\frac{\lambda}{\mu}\right)^{\frac{\alpha}{\lambda}}\right].
\label{eq:EVarN}
\end{equation}
%
\section{The diffusion approximation}\label{section:5}
In this section we construct a diffusion approximation for the process $\cal N$.
We adopt a scaling procedure that is customary in queueing theory contexts (see, for
instance, Di Crescenzo {\em et al.} \cite{DiGiNoRi03}). First of all, we perform a different
parameterization of the model studied in Section \ref{Section:2} by setting
\begin{equation}
 \alpha=\tilde \gamma\, {\tilde \mu\over \epsilon},
 \qquad
 \lambda=  {\tilde \mu\over \epsilon} + \tilde \beta,
 \qquad
 \mu=  {\tilde \mu\over \epsilon},
 \label{eq:parametri}
\end{equation}
with $\tilde\gamma>0$, $\tilde \mu>0$, $\tilde \beta\in\mathbb{R}$ and $\epsilon>0$. Note that
$\epsilon$ is a positive constant that can be viewed as a measure of the size of $\tilde \mu$. It
plays a crucial role in the approximating procedure indicated below, where   $\epsilon\to 0^+$.
\par
For all $t>0$, consider the scaling $N^*_{\epsilon}(t)=N(t)\,\epsilon$, so that 
${\cal N}^*_{\epsilon}:=\{(N^*_{\epsilon}(t), J(t));\;t\geq 0\}$ 
is a continuous-time stochastic process having state space
$S_{\epsilon}^*=\{0\}\cup\left(\mathbb{N}^+_{\epsilon}\times D\right)$, 
where $\mathbb{N}^+_{\epsilon}=\{\epsilon,2 \epsilon,3 \epsilon,\ldots\}$. 
The transient probabilities, for $t\geq 0$, $k\in\mathbb{N}_0$, $j\in D$, are given by 
\begin{eqnarray*}
 p^*_{\epsilon}(0,t) \!\!  &:=& \!\!  {\mathbb P}\left\{(N^*_{\epsilon}(t),J(t))=0\right\}, \\
p^*_{\epsilon}(k,j,t) \!\!  &:=& \!\!  {\mathbb P}\left\{(N^*_{\epsilon}(t),J(t))=(k \epsilon,j)\right\}  
 = {\mathbb P}\left\{k\epsilon\leq N^*_{\epsilon}(t)<(k+1)\epsilon, J(t)=j\right\}.
\end{eqnarray*}
In the limit as $\epsilon\to 0^+$, the scaled process ${\cal N}^*_{\epsilon}$ is shown to converge
weakly to a diffusion process ${\cal X}:=\{(X(t),J(t));\;t\geq 0\}$, whose state space is the star graph
$S_{\cal X}:=\{0\}\cup\left(\mathbb{R}^+\times D\right)$. 
For $x\in \mathbb{R}^+$, $t\geq 0$ and $j\in D$, let
${\mathbb P}\{x\leq X(t)<x+\epsilon, J(t)=j\}=f(x,j,t)\epsilon+o(\epsilon)$, so that $f(x,j,t)$ denotes
the density of the process in state $x$  on the ray $S_j$. 
\begin{proposition}\label{prop:diffj}
For $x\in \mathbb{R}^+$, $t> 0$ and $j\in D$, the following differential equation holds:
\begin{equation}
 {\partial\over\partial t}\;f(x,j,t)=-{\partial\over\partial x}\;
 \Bigl\{(\tilde \beta\,x+\tilde \gamma\, \tilde \mu)\,
 f(x,j,t)\Bigr\}
 +{1\over 2}\,{\partial^2\over\partial x^2}\Bigl\{2\,\tilde \mu\, x\,f(x,j,t)\Bigr\},
 \label{eq:equdiff}
\end{equation}
with boundary condition
\begin{equation}
  \sum_{j=1}^d \lim_{x\to 0^+}\Bigl\{(\tilde \beta\,x+\tilde \gamma\, \tilde \mu)\,f(x,j,t)
  -{1\over 2}\,{\partial\over\partial x}\bigl[2\,\tilde \mu\, x\,f(x,j,t)\bigr]\Bigr\}=0.
  \label{eq:rifless}
\end{equation}
\end{proposition}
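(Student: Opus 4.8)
The plan is to obtain both the Fokker--Planck equation (\ref{eq:equdiff}) and the vertex condition (\ref{eq:rifless}) from a Kramers--Moyal (van Kampen) expansion of the forward equations of the scaled process, followed by the limit $\epsilon\to 0^+$. First I would write the master equations for the scaled chain. Since $N^*_{\epsilon}(t)=\epsilon N(t)$ only rescales the state values and not time, the scaled probabilities coincide with those in (\ref{eq:system}): for a state $x=k\epsilon$ on ray $j$ one has $p^*_{\epsilon}((k\epsilon)_j,t)=p(k_j,t)$. I would then insert the ansatz $p(k_j,t)=\epsilon f(x_j,t)+o(\epsilon)$, writing the birth rate $b(x)=\alpha+\lambda x/\epsilon$ and death rate $d(x)=\mu x/\epsilon$ as functions of $x$, divide the interior equation of (\ref{eq:system}) by $\epsilon$, and Taylor-expand $b(x-\epsilon)f((x-\epsilon)_j,t)$ and $d(x+\epsilon)f((x+\epsilon)_j,t)$ about $x$ to second order. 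After the zeroth-order terms cancel this yields
\[
\frac{\partial}{\partial t}f(x_j,t)=-\epsilon\,\frac{\partial}{\partial x}\bigl[(b-d)f\bigr]+\frac{\epsilon^2}{2}\,\frac{\partial^2}{\partial x^2}\bigl[(b+d)f\bigr]+O(\epsilon^3).
\]

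Second, I would substitute the parameterization (\ref{eq:parametri}). The crucial point is the exact cancellation of the divergent $O(1/\epsilon)$ parts. In the drift one has $b-d=\alpha+(\lambda-\mu)x/\epsilon=(\tilde\gamma\tilde\mu+\tilde\beta x)/\epsilon$, so $\epsilon(b-d)=\tilde\beta x+\tilde\gamma\tilde\mu$ identically; in the second moment $b+d=\alpha+(\lambda+\mu)x/\epsilon$ has leading term $2\tilde\mu x/\epsilon^2$, so $\tfrac{\epsilon^2}{2}(b+d)=\tilde\mu x+O(\epsilon)$. Letting $\epsilon\to 0^+$ then leaves precisely (\ref{eq:equdiff}), with linear drift $\tilde\beta x_j+\tilde\gamma\tilde\mu$ and linear infinitesimal variance $2\tilde\mu x_j$; all remaining contributions are $O(\epsilon)$ and vanish.

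Third, for the boundary condition I would analyze the balance equation for the origin, namely the first line of (\ref{eq:system}), $\frac{d}{dt}p(0,t)=\mu\sum_{j}p(1_j,t)-d\alpha\,p(0,t)$. Rewriting it as $\frac{d}{dt}p(0,t)=-\sum_{j}J^{(0)}_{j}$ with the discrete bond current $J^{(0)}_{j}=\alpha\,p(0,t)-\mu\,p(1_j,t)$, I would show that under (\ref{eq:parametri}) and the ansatz $p(1_j,t)\simeq\epsilon f(0^+_j,t)$ the current $J^{(0)}_{j}$ converges to the continuum probability current $J_j(x,t)=(\tilde\beta x+\tilde\gamma\tilde\mu)f(x_j,t)-\frac12\frac{\partial}{\partial x}[2\tilde\mu x\,f(x_j,t)]$ evaluated at $x=0^+$. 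Equivalently, I would integrate (\ref{eq:equdiff}) over $x\in(0,\infty)$ on each ray and sum over $j$, using that the total probability $p(0,t)+\sum_{j}\int_0^{\infty}f(x_j,t)\,{\rm d}x$ is conserved and that the flux vanishes at infinity. Since the divergent immigration rate $\alpha\sim\tilde\gamma\tilde\mu/\epsilon$ forces $p(0,t)=O(\epsilon)$, the origin carries no atom in the limit and $\frac{d}{dt}p(0,t)\to 0$, leaving the Kirchhoff-type condition $\sum_{j}J_j(0^+,t)=0$, which is exactly (\ref{eq:rifless}).

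The main obstacle I expect is the boundary analysis rather than the interior expansion: because $\alpha$ and $\mu$ both blow up like $1/\epsilon$, one must track $p(0,t)$ and $p(1_j,t)$ at the correct order in $\epsilon$ to identify the limiting flux, and one must justify that the discrete vertex balance passes to the continuum condition $\sum_{j}J_j(0^+,t)=0$ rather than producing sticky or absorbing behaviour at the origin. A fully rigorous weak-convergence statement (tightness plus identification of the limit via the martingale problem or generator convergence) would require more than the formal Kramers--Moyal matching of infinitesimal moments, whereas the interior computation leading to (\ref{eq:equdiff}) is routine once the two cancellations above are recorded.
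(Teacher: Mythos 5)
Your proposal is correct and follows essentially the same route as the paper: substitute the ansatz $p^*_{\epsilon}(k_j,t)\simeq \epsilon f((k\epsilon)_j,t)$ into the forward equations, Taylor-expand to second order, use the parameterization (\ref{eq:parametri}) to cancel the divergent $O(1/\epsilon)$ parts so that the drift becomes $\tilde\beta x_j+\tilde\gamma\tilde\mu$ and the infinitesimal variance $2\tilde\mu x_j$, and read the zero-flux vertex condition off the balance equation at the origin. The only cosmetic differences are that the paper carries out the expansion on the discrete-time increment with the scaling $\Delta t=A\,\epsilon^2$ rather than directly on the continuous-time master equation, and states the boundary step as the limiting equation for $f(0,t+\Delta t)$ rather than as convergence of the discrete bond currents $\alpha\,p(0,t)-\mu\,p(1_j,t)$ to the continuum flux; both derivations are formal at exactly the same level, and your closing caveat about the lack of a rigorous weak-convergence argument applies equally to the paper's proof.
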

\begin{proof}
Since $p^*_{\epsilon}(k,j,t)=p(k,j,t)$, due to (\ref{eq:parametri}) and in analogy with system (\ref{eq:system}),
for $j\in D$ and $t> 0$ we have
\begin{eqnarray}
&& \hspace{-1cm}
 p^*_{\epsilon}(0, t+\Delta t)  =  \sum_{j=1}^d p^*_{\epsilon}(1,j,t)\,{\tilde \mu\over \epsilon}\Delta t
 +p^*_{\epsilon}(0,t)\,\left(1-d\tilde \gamma\, {\tilde \mu\over \epsilon} \Delta t\right)+o(\Delta t),
 \label{eq:nsystem1} \\
&& \hspace{-1cm}
 p^*_{\epsilon}(k,j, t+\Delta t) = p^*_{\epsilon}(k-1,j,t)\,
 \left[\tilde \gamma\, {\tilde \mu\over \epsilon}
 +\left({\tilde \mu\over \epsilon} + \tilde \beta\right)(k-1)\right]\Delta t
 +  p^*_{\epsilon}(k+1,j,t)\,{\tilde \mu\over \epsilon}\,(k+1) \Delta t
 \nonumber \\
 &&  \hspace{1.4cm}  + p^*_{\epsilon}(k,j,t)\,\left\{1-\left[\tilde \gamma\, {\tilde \mu\over \epsilon}
 +\left(2{\tilde \mu\over \epsilon} + \tilde \beta\right)k\right]\Delta t\right\}
 +o(\Delta t),
 \qquad  k\in\mathbb{N}^+.
   \label{eq:nsystem2}
\end{eqnarray}
Let $p^*_{\epsilon}(k,j,t)\simeq f(k\epsilon,j,t)\, \epsilon$ for $\epsilon$ close to 0.
Hence, for $x=k \epsilon$, from Eq.\ (\ref{eq:nsystem2}) we have
\begin{eqnarray*}
f(x,j,t+\Delta t) \!\!  &=& \!\!  f(x-\epsilon,j,t)\,\left[\tilde \gamma\, \tilde \mu
 +\left({\tilde \mu\over \epsilon} + \tilde \beta\right)(x-\epsilon)\right]{\Delta t\over \epsilon} 
 +  f(x+\epsilon,j,t)\,\tilde \mu\,(x+\epsilon) {\Delta t\over \epsilon}
  \\
 & + & \!\! f(x,j,t)\,\left\{1-\left[\tilde \gamma\, \tilde \mu
 +\left(2{\tilde \mu\over \epsilon} + \tilde \beta\right)x\right]{\Delta t\over \epsilon}\right\}
 +o(\Delta t).
\end{eqnarray*}
Expanding $f$ as Taylor series, by setting $\Delta t=A\, \epsilon ^2$, with $A>0$, and passing to
the limit as $\epsilon\to 0^+$, we obtain Eq.\ (\ref{eq:equdiff}). Similarly, Eq.\ (\ref{eq:nsystem1}) yields
$$
 f(0,t+\Delta t)
 =\sum_{j=1}^d f(\epsilon,j,t)\,\tilde \mu\,{\Delta t\over \epsilon}
 +f(0,t)\left(1 -d \tilde \gamma \tilde \mu\,{\Delta t \over \epsilon}\right)
 +o(\Delta t),
$$
so that (\ref{eq:rifless}) holds.
\end{proof}
\par
From the above procedure, the following approximation holds:
${\mathbb P}\{N(t)<k\}\simeq {\mathbb P}\{X(t)<k\epsilon\}$,
this being expected to improve as $\epsilon \to 0^+$ and  $k\to +\infty$.
\par
Let us now introduce the density
\begin{equation}
 h(x,t):=\sum_{j=1}^d f(x,j,t),\qquad x\in \mathbb{R}^+,\quad t\geq 0.
  \label{eq:definhxt}
\end{equation}
\begin{proposition}
For $x\in \mathbb{R}^+$ and $t> 0$, the transition density (\ref{eq:definhxt}) satisfies the following
differential equation:
\begin{equation}
 {\partial\over\partial t}\;h(x,t)=-{\partial\over\partial x}\;
 \Bigl\{(\tilde \beta\,x+\tilde \gamma\, \tilde \mu)\,
 h(x,t)\Bigr\}
 +{1\over 2}\,{\partial^2\over\partial x^2}\Bigl\{2\,\tilde \mu\, x\,h(x,t)\Bigr\},
 \label{eq:equdiffsomma}
\end{equation}
with boundary condition
\begin{equation}
  \lim_{x\to 0^+}\Bigl\{(\tilde \beta\,x+\tilde \gamma\, \tilde \mu)\,h(x,t)
  -{1\over 2}\,{\partial\over\partial x}\bigl[2\,\tilde \mu\, x\,h(x,t)\bigr]\Bigr\}=0
  \label{eq:equdiffbound}
\end{equation}
and Dirac-delta initial condition
\begin{equation}
 \lim_{t\to 0^+}h(x,t)=\delta(x).
  \label{eq:initcond}
\end{equation}
\end{proposition}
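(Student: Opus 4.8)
The plan is to derive all three assertions directly from Proposition \ref{prop:diffj}, exploiting the linearity of the operators appearing in (\ref{eq:equdiff}) and (\ref{eq:rifless}) together with the definition (\ref{eq:definhxt}). First I would observe that the drift-diffusion operator on the right-hand side of (\ref{eq:equdiff}) has coefficients $\tilde\beta$, $\tilde\gamma$, $\tilde\mu$ that do not depend on the index $j$, and that each $f(x_j,t)$ is a function of the scalar coordinate $x$ on the corresponding ray. Hence, summing (\ref{eq:equdiff}) over $j=1,\ldots,d$ and identifying every ray-coordinate $x_j$ with the common variable $x$, the left-hand side becomes $\partial_t \sum_{j=1}^d f(x_j,t)=\partial_t h(x,t)$, while the right-hand side becomes the same operator applied to $\sum_{j=1}^d f(x_j,t)=h(x,t)$. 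This yields (\ref{eq:equdiffsomma}) at once.

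For the boundary condition, I would write the functional inside the limit in (\ref{eq:equdiffbound}) as a scalar expression in $h$ and note that, since $h(x,t)=\sum_{j=1}^d f(x_j,t)$,
$$
(\tilde\beta x+\tilde\gamma\tilde\mu)\,h(x,t)-\frac{1}{2}\,\partial_x\bigl[2\tilde\mu x\,h(x,t)\bigr]
=\sum_{j=1}^d\Bigl\{(\tilde\beta x+\tilde\gamma\tilde\mu)\,f(x_j,t)-\frac{1}{2}\,\partial_x\bigl[2\tilde\mu x\,f(x_j,t)\bigr]\Bigr\}.
$$
Passing to the limit $x\to 0^+$ and exchanging the finite sum with the limit, the right-hand side equals the left-hand side of (\ref{eq:rifless}), which vanishes. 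Thus (\ref{eq:equdiffbound}) is an immediate consequence of (\ref{eq:rifless}).

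Finally, for the initial condition I would appeal to the discrete starting condition (\ref{eq:probiniz}), i.e.\ that the particle is located almost surely at the origin at time $0$. Under the scaling $N^*_\epsilon(t)=N(t)\,\epsilon$ and the diffusion limit $\epsilon\to 0^+$, the total probability mass carried by $h(\cdot,t)$ concentrates at $x=0$ as $t\to 0^+$; since $h(\cdot,t)$ is a probability density on $\mathbb{R}^+$ that integrates to one and collapses onto the single point $0$, its weak limit is the Dirac measure, giving (\ref{eq:initcond}).

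I expect this last step to be the only delicate point: the arguments for the equation and for the boundary condition are purely algebraic, resting on linearity and the index-independence of the coefficients, whereas the identification of the $t\to 0^+$ limit with $\delta(x)$ requires interpreting the convergence of $h(x,t)$ in the distributional sense, consistent with the weak convergence of $N^*_\epsilon$ to $X$ established in the construction preceding Proposition \ref{prop:diffj}.
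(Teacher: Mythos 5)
Your proposal is correct and follows essentially the same route as the paper: the equation and the boundary condition are obtained by summing (\ref{eq:equdiff}) and (\ref{eq:rifless}) over $j$ and using the linearity of the operators together with the definition (\ref{eq:definhxt}), while the initial condition is traced back to (\ref{eq:probiniz}). You merely spell out the details that the paper leaves as ``immediate.''
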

\begin{proof}
The proof of Eqs.\ (\ref{eq:equdiffsomma}) and (\ref{eq:equdiffbound})
follows immediately from Proposition \ref{prop:diffj}, and recalling position (\ref{eq:definhxt}).
The condition (\ref{eq:initcond}) can be obtained from (\ref{eq:probiniz}).
\end{proof}
\par
Note that Eq.\ $(\ref{eq:equdiffsomma})$ is the Fokker-Planck equation for a temporally
homogeneous diffusion process on $\mathbb{R}^+$ with linear drift and linear infinitesimal variance,
while Eq.\ $(\ref{eq:equdiffbound})$ expresses a zero-flux condition in the state $x=0$.
We remark that various results on such kind of diffusion process have been given in
Buonocore {\em et al.} \cite{BuCaNoPi2013}, Giorno {\em et al.} \cite{GiNoRiSa86},
and Sacerdote \cite{Sa90}, for instance.
\par
Hereafter we show that $h(x,t)$ is a gamma density with shape parameter $\tilde \gamma$ and rate $\psi(t)$.
\begin{proposition}
Let $\psi(t)=(\tilde \beta/\tilde \mu) ({\rm e}^{\tilde \beta t}-1)^{-1}$, $t> 0$. 
The density (\ref{eq:definhxt}) is given by
\begin{equation}
 h(x,t)=\frac{[\psi(t)]^{\tilde \gamma}}{\Gamma(\tilde \gamma)}\,x^{\tilde \gamma-1} \, {\rm e}^{-x \psi(t)},
 \qquad x\in\mathbb{R}^+, \quad t> 0.
 \label{eq:esprhxt}
\end{equation}
\end{proposition}
\begin{proof}
The transformation (see Capocelli and Ricciardi \cite{CapRic1976})
$$
 x'=x\, {\rm e}^{-\tilde \beta t},
 \qquad
 t'=\frac{\tilde \mu}{\tilde \beta}\left(1-{\rm e}^{-\tilde \beta t}\right),
 \qquad
 h(x,t)={\rm e}^{-\tilde \beta t} \,h'(x',t'),
$$
changes equation (\ref{eq:equdiffsomma})  and condition (\ref{eq:equdiffbound}) respectively into a
Fokker-Planck equation for the time-homogeneous diffusion process on $\mathbb{R}^+$ having drift
$\tilde \gamma$ and infinitesimal variance $2x'$, with a zero-flux condition on the boundary $x'=0$.
Initial condition (\ref{eq:initcond})  becomes $ \displaystyle\lim_{t'\to 0^+}h'(x',t')=\delta(x')$.
The proof thus proceeds similarly as Proposition 4.1 of Di Crescenzo and Nobile \cite{DiCrNo95}
assuming a zero initial state.
\end{proof}
\par
\begin{figure}[h] 
\begin{center}
\epsfxsize=6.5cm
\epsfbox{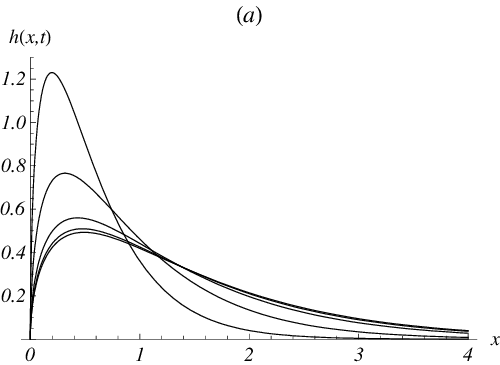}
$\;$
\epsfxsize=6.5cm
\epsfbox{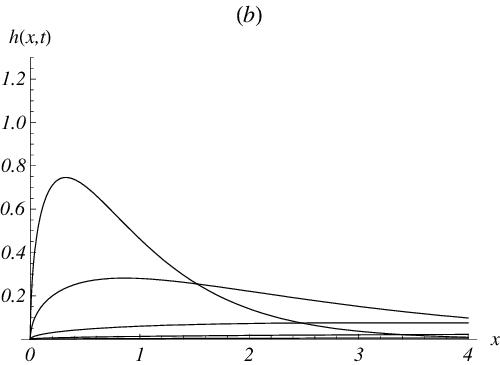}
\caption{Density (\ref{eq:esprhxt}) for $t=0.5, 1, 2, 3, 4$ (from top to bottom near the origin),
for $\tilde \mu=1$, $\tilde \gamma=1.5$ and (a) $\tilde \beta=-1$, (b)  $\tilde \beta=1$.
}
\label{fig:densh}
\end{center}
\end{figure}
\par
In Figure \ref{fig:densh} we show some plots of density $h(x,t)$. 
\par
From Eq.\ (\ref{eq:esprhxt}) we immediately obtain that a gamma-type stationary
density exists when $\tilde \beta<0$.
\begin{corollary}
If $\tilde \beta<0$, then
\begin{equation}
 \overline h(x):=\lim_{t\rightarrow +\infty} h(x,t)
 =\frac{1}{\Gamma(\tilde \gamma)} \Bigg(\frac{| \tilde \beta |}{\tilde\mu}\Bigg)^{\!\tilde \gamma}
 x^{\tilde \gamma-1}\,\exp\Bigg(-x \,\frac{|\tilde \beta|}{\tilde \mu}\Bigg),
 \qquad x\in\mathbb{R}^+.
\label{denstaz}
\end{equation}
\end{corollary}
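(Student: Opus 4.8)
The plan is to read the result directly off the explicit gamma representation of $h(x,t)$ established in the preceding proposition. Equation (\ref{eq:esprhxt}) already exhibits $h(x,t)$ as a gamma density with \emph{fixed} shape parameter $\tilde \gamma$ and a purely time-dependent rate $\psi(t)$; consequently the entire content of the corollary is to evaluate $\lim_{t\to +\infty}\psi(t)$ under the standing hypothesis $\tilde \beta<0$ and then to pass this limit inside the density. No new machinery is required.

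First I would analyse $\psi(t)=\frac{\tilde \beta}{\tilde \mu}\cdot\frac{1}{{\rm e}^{\tilde \beta t}-1}$. For $\tilde \beta<0$ one has ${\rm e}^{\tilde \beta t}<1$ for every $t>0$, hence ${\rm e}^{\tilde \beta t}-1<0$; combined with $\tilde \beta/\tilde \mu<0$ this shows $\psi(t)>0$, so that $h(x,t)$ is a genuine gamma density at each fixed time. As $t\to +\infty$ we have ${\rm e}^{\tilde \beta t}\to 0$, whence ${\rm e}^{\tilde \beta t}-1\to -1$ and therefore
$$
\lim_{t\to +\infty}\psi(t)=-\frac{\tilde \beta}{\tilde \mu}=\frac{|\tilde \beta|}{\tilde \mu},
$$
a strictly positive finite number precisely because $\tilde \beta<0$. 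Since for each fixed $x>0$ the map $\psi\mapsto \frac{\psi^{\tilde \gamma}}{\Gamma(\tilde \gamma)}\,x^{\tilde \gamma-1}\,{\rm e}^{-x\psi}$ is continuous on $(0,+\infty)$, I may interchange limit and evaluation, substituting the limiting rate $|\tilde \beta|/\tilde \mu$ into (\ref{eq:esprhxt}) to obtain exactly the expression (\ref{denstaz}) for $\overline h(x)$.

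There is essentially no obstacle here, as the substantive work was carried out in deriving (\ref{eq:esprhxt}). The only point worth emphasising is the \emph{role of the sign condition}: the hypothesis $\tilde \beta<0$ is exactly what forces the limiting rate $|\tilde \beta|/\tilde \mu$ to be positive and finite, so that a proper stationary gamma density exists. For $\tilde \beta\ge 0$ the rate degenerates to zero instead, since ${\rm e}^{\tilde \beta t}-1\to +\infty$ when $\tilde \beta>0$ and ${\rm e}^{\tilde \beta t}-1\sim \tilde \beta t$ when $\tilde \beta=0$, giving $\psi(t)\to 0^+$ and a density that spreads out without a stationary limit. This dichotomy is consistent with the drift term $\tilde \beta\,x$ in (\ref{eq:equdiffsomma}) being mean-reverting exactly when $\tilde \beta<0$.
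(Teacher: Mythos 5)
Your proposal is correct and follows exactly the route the paper intends: the corollary is read off from the explicit gamma form of $h(x,t)$ in the preceding proposition by computing $\lim_{t\to+\infty}\psi(t)=|\tilde\beta|/\tilde\mu$ when $\tilde\beta<0$ and substituting. The paper itself gives no separate proof beyond remarking that the result follows immediately from that formula, so your argument matches (and slightly elaborates on) the paper's.
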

\par
It is worthwhile to note that the validity of the diffusion approximation discussed in the present 
section is ascertained by comparing the stationary laws of the involved processes. 
Indeed, performing the substitutions (\ref{eq:parametri}) in Eqs.\ (\ref{p0limit}) 
and (\ref{asymptPgrande}) it is not hard to prove that 
$$
 \lim_{\varepsilon \to 0^+}\frac{1}{\varepsilon}\rho(k)\Big|_{k=x/ \varepsilon }
 =\overline h(x),
$$
with $\overline h(x)$ given in (\ref{denstaz}). Similarly, from (\ref{eq:EVarN}) we obtain 
$$
 \lim_{\varepsilon \to 0^+}E[\varepsilon N]=\tilde \gamma\, \frac{\tilde\mu}{| \tilde \beta |}
 =E[X], 
 \qquad 
 \lim_{\varepsilon \to 0^+}Var[\varepsilon N]= \tilde \gamma\,\left( \frac{\tilde\mu}{| \tilde \beta |}\right)^2
 =Var[X],
$$
where $X$ denotes the random variable having density (\ref{denstaz}). 
\section{Discussion}\label{section:new}
In order to discuss some results obtained in the previous sections, we first 
consider $p(0, t)$, i.e.\ the probability of extinction of the population at {\em finite} times $t$. 
This finite-time probability deserves large interest since in many situations researchers cannot 
observe in a reliable manner the population dynamics for very long times. Figures 
\ref{fig:LugualeM}, \ref{fig:LminM} and \ref{fig:LmagM} confirm that $p(0,t)$ decreases 
linearly in $\alpha$ and $d$ when $t$ is close to $0$.  Indeed, from (\ref{eq:system}) 
and (\ref{eq:probiniz}) we have ${d \over d t}\;p(0, t)|_{t=0}=-d \alpha$, and clearly 
${d \over d t}\;P(1, t)|_{t=0}=d \alpha$. Hence, in this multispecies model the number of species 
and the immigration rate play a similar role to increase the survival probability for short times. 
\par
Let us now focus on the stationary distribution obtained in Proposition \ref{stationaryprob} for 
$\lambda <\mu$. From Eq.\ (\ref{asymptPgrande}) we note that $\rho(k)\geq \rho(k+1)$ when 
$k\geq \max\{1,(\alpha-\mu)/(\mu-\lambda)\}$. Hence, if the immigration rate is smaller than the 
death rate then the sequence $\{\rho(k),\; k\in \mathbb{N}^+\}$ is decreasing whatever $d$ is. 
Instead, condition $\rho(0)\geq \rho(1)$ holds when $d \alpha\leq \mu$. This implies 
the following results:  
\par
{\it (a)} \ when $\alpha\leq \mu$ then $\rho(0)\geq \rho(1)\geq \rho(2)\geq\ldots$ if $1\leq d\leq \mu/\alpha$, 
and $\rho(0)\leq \rho(1)\geq \rho(2)\geq\ldots$ otherwise; 
\par
{\it (b)} \ when $\alpha> \mu$ then $\rho(0)\leq \rho(1)\leq \ldots\leq \rho(k^*)\geq \rho(k^*+1)\geq \ldots$ 
for $k^*=\lceil (\alpha-\mu)/(\mu-\lambda)\rceil$. 
\par
In other terms, if $\alpha\leq \mu$ the ratio between the death rate and the immigration rate is a 
critical value for the number of species, since the stationary probability of extinction is larger than 
the stationary probability of any other state $k\in \mathbb{N}^+$, if $1\leq d\leq \mu/\alpha$. 
However, even if $d\geq \mu/\alpha\geq 1$ the stationary distribution attains its maximum for $k=1$. 
Instead, a large immigration rate (i.e., $\alpha> \mu$) yields a larger mode $k^*$ for the stationary 
probability distribution. From Eq.\ (\ref{eq:EVarN}) we have that the mode $k^*$ is very close to 
the stationary expected number of individuals when $d$ is large. This is confirmed, for instance, 
by the contour plots of $k^*=\lceil (\alpha-\mu)/(\mu-\lambda)\rceil$ and $E[N]$ (for $d=10$), 
shown in Figure \ref{fig:kstar} when $0<\lambda/\mu<1<\alpha/\mu<100$. 
%
\begin{figure}[h] 
\begin{center}
\epsfxsize=6.5cm
\epsfbox{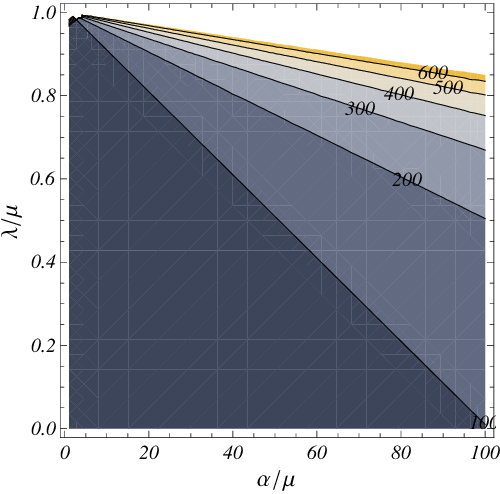}
$\;$
\epsfxsize=6.5cm
\epsfbox{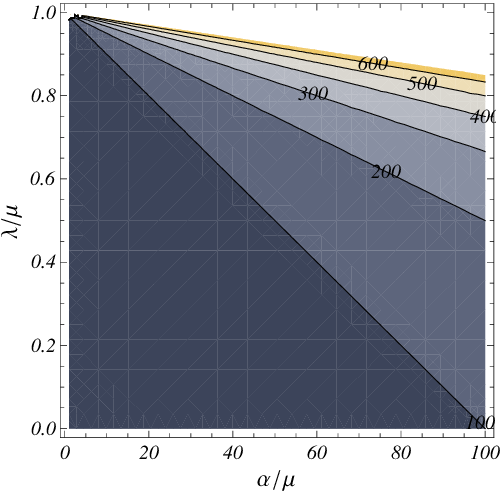}
\caption{Contour plots of $k^*=\lceil (\alpha-\mu)/(\mu-\lambda)\rceil$ (left) and $E[N]$ for $d=10$ (right).}
\label{fig:kstar}
\end{center}
\end{figure}
%
\par
In order to emphasize the dependence on $d$ of the stationary distribution obtained in 
Proposition \ref{stationaryprob}, we note that  $\rho(0)$ is decreasing in $d\in  \mathbb{N}^+$, 
whereas $\rho(k)$ is increasing in $d\in  \mathbb{N}^+$ for $k\in \mathbb{N}^+$. 
Hence, if the number of species increases then the stationary probability of extinction decreases. 
\par
From Eq.\ (\ref{eq:EVarN}) we have that if $d$ grows then $E[N]$ increases, going to a finite 
limit when $d\to +\infty$. In particular, this illustrates that when $\lambda<\mu$ the expected number 
of individuals does not grow indefinitely in the steady state. Moreover, we can adopt the coefficient 
of variation as an adimensional normalized measure of dispersion. For $\lambda<\mu$, we have 
that $CV[N]=\sqrt{Var[N]}/E[N]$ is decreasing in $d \in \mathbb{N}^+$ and tends to a finite limit: 
$$
 \lim_{d\to +\infty}CV[N]=\sqrt{\frac{\mu}{\alpha}-\left(1+\frac{\mu}{\alpha}\right)
 \left(1-\frac{\lambda}{\mu}\right)^{\alpha/\lambda}}.
$$
\par
Another dispersion measure of interest in biological modeling is entropy. 
We recall that the (Shannon) entropy of a random variable $N$ with probability distribution 
$\{\rho(k); k\in\mathbb{N}_0\}$ is defined as 
$$
 H[N]=-\sum_{k=0}^{+\infty}\rho(k)\ln[\rho(k)].
$$
Specifically, $H[N]$ gives the average amount of information that is gained when the 
steady-state number of individuals $N$ is observed. 
In Figure \ref{fig:MedVarCV} we show mean, variance, coefficient of variation and entropy of $N$, 
as a function of $\alpha$, for some choices of the involved parameters. Finally, numerical 
evaluations indicate that $H[N]$ is decreasing in $d\in\mathbb{N}^+$, is decreasing in $\alpha>0$, 
and is decreasing in $\lambda/\mu\in (0,1)$.  
%
\begin{figure}[h] 
\begin{center}
\epsfxsize=6.5cm
\epsfbox{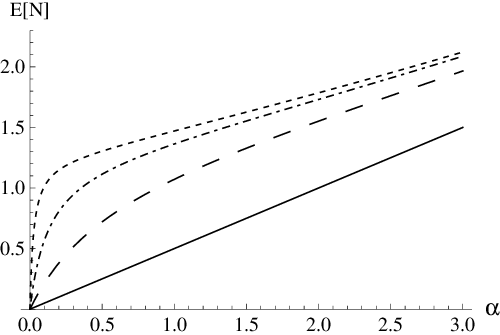}
$\;$
\epsfxsize=6.5cm
\epsfbox{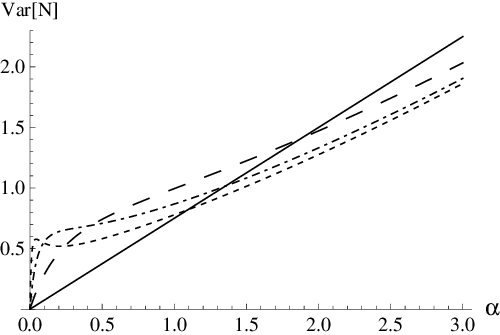}
\\
\epsfxsize=6.5cm
\epsfbox{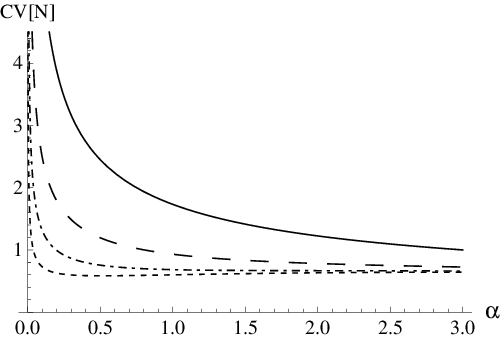}
$\;$
\epsfxsize=6.5cm
\epsfbox{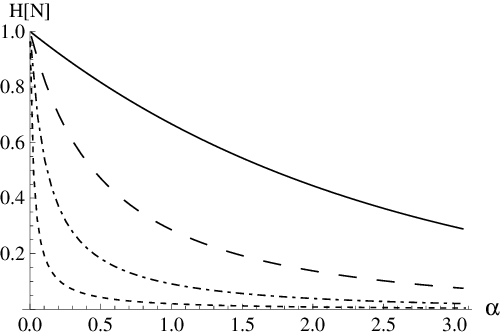}
\caption{Plots of mean, variance, coefficient of variation and entropy of $N$, for $0<\alpha<3$, $\lambda=1$, $\mu=3$ 
and for $d=1$ (plain line), $d=5$ (dashed line), $d=20$ (point-dashed line), $d=100$ (small dashed line).}
\label{fig:MedVarCV}
\end{center}
\end{figure}
\section{Concluding remarks}\label{section:8}
In this paper we focused on the analysis of a continuous-time stochastic process describing 
the dynamics of a population formed by $d$  species competing for a habitat. Formally, we 
considered an extended birth-death-immigration process on a lattice formed by $d$ semiaxes 
joined at the origin. Because of the difficulties in analyzing the stochastic model with different 
rates, we were forced to consider the case of equal transition rates for the various species. 
\par
The main achievements of the paper are concerning: (i) the transient analysis of the process, 
performed by determining the related generating functions and coming to the transient probability 
that the habitat is empty; (ii) the asymptotic distribution of the model, obtained by means of 
Laplace transforms; (iii)  the diffusive approximation of the process, given by a suitable diffusion 
process on the star graph. We point out that the gamma-type stationary density of the approximating 
diffusion process is in tight agreement with the zero-modified negative binomial distribution 
of the original model. 
\par
A thorough discussion on the role of the parameters of the model has also been provided, 
finalized to interpret the given results in biological terms, with special attention to the mode, the 
coefficient of variation and the entropy of the asymptotic distribution. 
\par
We note that the transient analysis of the model have been performed for special 
choices of the parameters, whereas the asymptotic results have been obtained in the 
general case both for the original birth-death model and the approximating diffusion process. 
\appendix
\section{Proof of Theorem \ref{theorem1}}\label{appendix-A}
We now provide the proof of Theorem \ref{theorem1} in $3$ cases.
Recall that the Laplace transform of any function $f(t)$ is denoted as in (\ref{eq:defLsft}).
\subsection{Case $\lambda= \mu$}
From Eq.\ (\ref{eq:eqpot}) if $\lambda= \mu$ we obtain
\begin{equation}
 {\cal L}_s[p(0,t)]={\cal L}_s\left[\frac{1}{(1+\lambda t)^{\frac{\alpha}{\lambda}}}\right]
 -\alpha (d-1) {\cal L}_s[p(0,t)]{\cal L}_s \left[\frac{1}{(1+\lambda t)^{\frac{\alpha}{\lambda}+1}}\right],
 \label{Laplace1LedM}
\end{equation}
where, for any $b\in {\mathbb R}$, $\lambda,s>0$
$$
 {\cal L}_s\left[\frac{1}{(1+\lambda t)^{b}}\right]=\frac{{\rm e}^{s/\lambda}}{\lambda} E\left(b,\frac{s}{\lambda}\right),
$$
and where $E(\nu,z)$ is defined in (\ref{expintfunction}). Noting that
$$
E(\nu,z)=\frac{1}{\nu-1}[{\rm e}^{-z}-z E(\nu-1,z)],\quad \nu\in {\mathbb R},\quad z>0,
$$
from Eq.\ (\ref{Laplace1LedM}) we have
$$
\displaystyle{
{\cal L}_s[p(0,t)]=\frac{1}{\lambda d}\, \frac{{\rm e}^{s/\lambda}\, E\left(\frac{\alpha}{\lambda},\frac{s}{\lambda}\right)}
{1-\frac{d-1}{d}\, \frac{s}{\lambda}\, {\rm e}^{s/\lambda}\, E\left(\frac{\alpha}{\lambda},\frac{s}{\lambda}\right)}.}
$$
Hence, the above expression gives
\begin{eqnarray}
&& \hspace*{-0.8cm}
{\cal L}_s[p(0,t)]=\frac{1}{s d}\sum_{n=0}^{+\infty} \left(1-\frac{1}{d}\right)^{\! n}
\left[\frac{s}{\lambda}\, {\rm e}^{s/\lambda}\, E\left(\frac{\alpha}{\lambda},\frac{s}{\lambda}\right) \right]^{n+1}
\nonumber
\\
&& \hspace*{0.5cm}
=\frac{1}{s d}\sum_{n=0}^{+\infty} \left(1-\frac{1}{d}\right)^{\! n}
\sum_{j=0}^{n+1} {n+1 \choose j} (-1)^j
\left[1-\frac{s}{\lambda}\, {\rm e}^{s/\lambda}\,E\left(\frac{\alpha}{\lambda},\frac{s}{\lambda}\right) \right]^j.
\label{Laplace2LedM}
\end{eqnarray}
Taking the inverse Laplace Transform, from Eq.\ (\ref{Laplace2LedM}) we obtain
\begin{equation}
p(0,t)=1+\frac{1}{d} \sum_{n=0}^{+\infty} \left(1-\frac{1}{d}\right)^n
\sum_{j=1}^{n+1} {n+1 \choose j} (-1)^j F_Y^{(j)}(t),
\label{dimpoLeqM}
\end{equation}
where $F_Y^{(j)}(t)$, defined in (\ref{distrsomma}), is the distribution function of the sum
of $j$ independent random variables having probability density
$$
 f_{Y}^{(1)}(t)=\frac{\alpha}{(1+\lambda t)^{\frac{\alpha}{\lambda}+1}},
 \qquad t>0.
$$
By rearranging the terms in the right-hand side of (\ref{dimpoLeqM}),
Eq.\ (\ref{p0}) immediately follows when $\lambda=\mu$.
\subsection{Case $\lambda> \mu$}
From Eqs.\ (\ref{eq:eqpot}) and (\ref{eq_def1menoGt}) when $\lambda> \mu$ we obtain
\begin{equation}
{\cal L}_s[p(0,t)]\left\{1+ (d-1)
{\cal L}_s
\left[\frac{\alpha (\lambda-\mu)^{\frac{\alpha}{\lambda}+1} {\rm e}^{(\lambda-\mu) t}}{(\lambda {\rm e}^{(\lambda-\mu) t}-\mu)^{\frac{\alpha}{\lambda}+1}}\right]
\right\}
= {\cal L}_s
\left[\frac{(\lambda-\mu)^{\frac{\alpha}{\lambda}}}{(\lambda {\rm e}^{(\lambda-\mu) t}-\mu)^{\frac{\alpha}{\lambda}}}\right].
\label{Laplace1LmagM}
\end{equation}
We have (cf.\ Eq.\ (3.197.3) of Gradshteyn and Ryzhik \cite{GrRy2007})
\begin{eqnarray*}
&& {\cal L}_s\left[\frac{(\lambda-\mu)^{\frac{\alpha}{\lambda}}}{(\lambda {\rm e}^{(\lambda-\mu) t}-\mu)^{\frac{\alpha}{\lambda}}}\right]=\frac{\lambda
\left(\frac{\lambda-\mu}{\lambda}\right)^{\frac{\alpha}{\lambda}}}{\lambda s+(\lambda-\mu)\alpha} 
\,{}_{2}F_{1}\left(\frac{\alpha}{\lambda},\frac{\alpha}{\lambda}+\frac{s}{\lambda-\mu};
1+\frac{\alpha}{\lambda}+\frac{s}{\lambda-\mu};\frac{\mu}{\lambda}\right),
\end{eqnarray*}
where ${}_{2}F_{1}(a,b;c;z)$ is defined in Eq. (\ref{Gaussipergeomfunction}).
Moreover, since
\begin{eqnarray*}
&&{\cal L}_s
\left[\frac{\alpha (\lambda-\mu)^{\frac{\alpha}{\lambda}+1} {\rm e}^{(\lambda-\mu) t}}{(\lambda {\rm e}^{(\lambda-\mu) t}-\mu)^{\frac{\alpha}{\lambda}+1}}\right]=\frac{\lambda \alpha
\left(\frac{\lambda-\mu}{\lambda}\right)^{\frac{\alpha}{\lambda}+1}}{\lambda s+(\lambda-\mu)\alpha} 
\,{}_{2}F_{1}\left(1+\frac{\alpha}{\lambda},\frac{\alpha}{\lambda}+\frac{s}{\lambda-\mu};
1+\frac{\alpha}{\lambda}+\frac{s}{\lambda-\mu};\frac{\mu}{\lambda}\right)
\end{eqnarray*}
and (cf., for instance, Eq.\ 15.2.14 of Abramowitz and Stegun \cite{Abram1994}),
\begin{equation}
b\, {}_{2}F_{1}(a,b+1;c;z)-a\, {}_{2}F_{1}(a+1,b;c;z)+(a-b)\, {}_{2}F_{1}(a,b;c;z)=0,
\label{relGaussIperg}
\end{equation}
from Eq.\ (\ref{Laplace1LmagM}) we have
$$
{\cal L}_s[p(0,t)]\left\{1+ (d-1)
\left[1- \frac{s
\left({\lambda-\mu}\right)^{\frac{\alpha}{\lambda}}\, {}_{2}F_{1}^+}
{\lambda^{\frac{\alpha}{\lambda}-1} (\lambda s+(\lambda-\mu)\alpha)} 
\right]\right\}
= \frac{ \left({\lambda-\mu}\right)^{\frac{\alpha}{\lambda}}\, {}_{2}F_{1}^+}{\lambda^{\frac{\alpha}{\lambda}-1} (\lambda s+(\lambda-\mu)\alpha)},
$$
where, for brevity, we set 
${}_{2}F_{1}^+={}_{2}F_{1}\left(\frac{\alpha}{\lambda},\frac{\alpha}{\lambda}+\frac{s}{\lambda-\mu};
1+\frac{\alpha}{\lambda}+\frac{s}{\lambda-\mu};\frac{\mu}{\lambda}\right)$. 
After some calculations, the above equation gives
\begin{eqnarray}
&& \hspace*{-0.7cm}
{\cal L}_s[p(0,t)]=\frac{1}{s d}\sum_{n=0}^{+\infty} \left(1-\frac{1}{d}\right)^n
\left[\frac{s
\left({\lambda-\mu}\right)^{\frac{\alpha}{\lambda}}
{}_{2}F_{1}^+}
{\lambda^{\frac{\alpha}{\lambda}-1} (\lambda s+(\lambda-\mu)\alpha)}
\right]^{n+1}
\nonumber
\\
&& \hspace*{0.6cm}
=\frac{1}{s d}\sum_{n=0}^{+\infty} \left(1-\frac{1}{d}\right)^n
\sum_{j=0}^{n+1} {n+1 \choose j} (-1)^j
\left[1-\frac{s
\left({\lambda-\mu}\right)^{\frac{\alpha}{\lambda}}
{}_{2}F_{1}^+}{\lambda^{\frac{\alpha}{\lambda}-1} (\lambda s+(\lambda-\mu)\alpha)}\right]^{j}.
\label{Laplace2LmagM}
\end{eqnarray}
Taking the inverse Laplace Transform, in Eq.\ (\ref{Laplace2LmagM}) we get
\begin{equation}
p(0,t)=1+\frac{1}{d} \sum_{n=0}^{+\infty} \left(1-\frac{1}{d}\right)^n
\sum_{j=1}^{n+1} {n+1 \choose j} (-1)^j F_Y^{(j)}(t),
\label{dimpoLmagM}
\end{equation}
where $F_Y^{(j)}(t)$ is the distribution function of the sum of $j$ independent random
variables having probability density
$$
f_{Y}^{(1)}(t)=\frac{\alpha (\lambda-\mu)^{\frac{\alpha}{\lambda}+1} {\rm e}^{(\lambda-\mu) t}}
{\left[\lambda {\rm e}^{(\lambda-\mu) t}-\mu  \right]^{\frac{\alpha}{\lambda}+1}},\quad t>0.
$$
Finally, from Eq. (\ref{dimpoLmagM}) we immediately
obtain Eq.\ (\ref{p0}) when $\lambda>\mu$.
%
\subsection{Case $\lambda< \mu$}
When $\lambda< \mu$, Eq.\ (\ref{Laplace1LmagM}) can be rewritten as
\begin{eqnarray}
&& \hspace*{-0.5cm}
{\cal L}_s[p(0,t)]\Bigg\{1+ (d-1) \left[1-\left(1-\frac{\lambda}{\mu}\right)^{\frac{\alpha}{\lambda}} \right]
\,{\cal L}_s
\Bigg[\frac{\alpha (\lambda-\mu)^{\frac{\alpha}{\lambda}+1} {\rm e}^{(\lambda-\mu) t}}
{\left[1-\left(1-\frac{\lambda}{\mu}\right)^{\frac{\alpha}{\lambda}} \right]
(\lambda {\rm e}^{(\lambda-\mu) t}-\mu)^{\frac{\alpha}{\lambda}+1}}\Bigg]
\Bigg\}
\nonumber
\\
&& \hspace*{4.5cm}
= {\cal L}_s
\left[\frac{(\lambda-\mu)^{\frac{\alpha}{\lambda}}}{(\lambda {\rm e}^{(\lambda-\mu) t}-\mu)^{\frac{\alpha}{\lambda}}}\right],
\label{Laplace1LminM}
\end{eqnarray}
with (cf.\ Eq.\ (3.197.3) of Gradshteyn and Ryzhik \cite{GrRy2007})
$$
{\cal L}_s\left[\frac{(\lambda-\mu)^{\frac{\alpha}{\lambda}}}{(\lambda {\rm e}^{(\lambda-\mu) t}-\mu)^{\frac{\alpha}{\lambda}}}\right]
=\frac{1}{s} \left(1-\frac{\lambda}{\mu}\right)^{\frac{\alpha}{\lambda}}
{}_{2}F_{1}\left(\frac{\alpha}{\lambda},\frac{s}{\mu-\lambda};
1+\frac{s}{\mu-\lambda};\frac{\lambda}{\mu}\right),
$$
and
\begin{eqnarray*}
&& \hspace*{-1.0cm}
{\cal L}_s
\Bigg[\frac{\alpha (\lambda-\mu)^{\frac{\alpha}{\lambda}+1} {\rm e}^{(\lambda-\mu) t}}
{\left[1-\left(1-\frac{\lambda}{\mu}\right)^{\frac{\alpha}{\lambda}} \right]
(\lambda {\rm e}^{(\lambda-\mu) t}-\mu)^{\frac{\alpha}{\lambda}+1}}\Bigg] 
=
\frac{\alpha}{\left[\left(\frac{\mu}{\mu-\lambda}\right)^{\frac{\alpha}{\lambda}}-1 \right]}
\frac{{}_{2}F_{1}\left(1+\frac{\alpha}{\lambda},1+\frac{s}{\mu-\lambda};
2+\frac{s}{\mu-\lambda};\frac{\lambda}{\mu}\right)}{\mu [1+\frac{s}{\mu-\lambda}]}
\nonumber
\\
&& \hspace*{-0.5cm}
=\frac{\lambda \left(\frac{\mu}{\mu-\lambda}\right)^{\frac{\alpha}{\lambda}}}{\mu \left[\left(\frac{\mu}{\mu-\lambda}\right)^{\frac{\alpha}{\lambda}}-1\right]}
\Bigg\{1+\frac{\left[\frac{\alpha}{\lambda}-1-\frac{s}{\mu-\lambda}\right]}{[1+\frac{s}{\mu-\lambda}]\left(\frac{\mu}{\mu-\lambda}\right)^{\frac{\alpha}{\lambda}}}\,  
{}_{2}F_{1}\left(\frac{\alpha}{\lambda},1+\frac{s}{\mu-\lambda};
2+\frac{s}{\mu-\lambda};\frac{\lambda}{\mu}\right)\Bigg\},
\end{eqnarray*}
where use of Eq.\ (\ref{relGaussIperg}) has been made.
Hence, performing some calculations, Eq.\ (\ref{Laplace1LminM}) becomes
\begin{eqnarray*}
&& \hspace*{-0.5cm}
{\cal L}_s[p(0,t)]\left\{1- \frac{\lambda (\mu-\lambda)^{\frac{\alpha}{\lambda}}(d-1)}
{\mu^{\frac{\alpha}{\lambda}}(\mu-\lambda+\lambda d)}
\left[1-\frac{\alpha (\mu-\lambda)}{\lambda (\mu-\lambda+s)}\right]\right. 
= \left.{}_{2}F_{1}\left(\frac{\alpha}{\lambda},1+\frac{s}{\mu-\lambda};
2+\frac{s}{\mu-\lambda};\frac{\lambda}{\mu}\right)
\right\}
\nonumber
\\
&& \hspace*{1cm}
= \frac{\left(1-\frac{\lambda}{\mu}\right)^{\frac{\alpha}{\lambda}}}{s\left[1+\frac{\lambda}{\mu}(d-1)\right]}
\,{}_{2}F_{1}\left(\frac{\alpha}{\lambda},\frac{s}{\mu-\lambda};
1+\frac{s}{\mu-\lambda};\frac{\lambda}{\mu}\right)
\end{eqnarray*}
so that
\begin{eqnarray}
&& \hspace*{-0.1cm}
{\cal L}_s[p(0,t)]=\frac{\mu}{s[\mu+\lambda (d-1)]}
\left(1-\frac{\lambda}{\mu}\right)^{\frac{\alpha}{\lambda}}
\,{}_{2}F_{1}\left(\frac{\alpha}{\lambda},\frac{s}{\mu-\lambda};
1+\frac{s}{\mu-\lambda};\frac{\lambda}{\mu}\right)
\nonumber
\\
&& \hspace*{1.4cm}
\times \sum_{n=0}^{+\infty} \left[\frac{\lambda (d-1)}{\mu+\lambda (d-1)}\right]^n
  \Bigg[\left(1-\frac{\lambda}{\mu}\right)^{\frac{\alpha}{\lambda}}
\left[1-\frac{\alpha (\mu-\lambda)}{\lambda (\mu-\lambda+s)}\right]
\nonumber
\\
&& \hspace*{1.4cm}
\times {}_{2}F_{1}\left(\frac{\alpha}{\lambda},1+\frac{s}{\mu-\lambda};
2+\frac{s}{\mu-\lambda};\frac{\lambda}{\mu}\right)\Bigg]^n.
\label{Laplace3LminM}
\end{eqnarray}
Recalling Eq.\ $15.2.25$ of \cite{Abram1994}, from Eq.\ (\ref{Laplace3LminM}), after some calculations, 
we have
\begin{eqnarray*}
\nonumber
&& \hspace*{-0.5cm}
{\cal L}_s[p(0,t)]=\frac{\lambda}{s[\mu+\lambda (d-1)]}
\sum_{n=0}^{+\infty} \left[\frac{\lambda (d-1)}{\mu+\lambda (d-1)}\right]^n
\left\{
\left(1-\frac{\lambda}{\mu}\right)^{\frac{\alpha}{\lambda}}
\left[1-\frac{\alpha (\mu-\lambda)}{\lambda (\mu-\lambda+s)}\right]
{}_{2}F_{1}^{\star}
\right\}^{n+1}
\nonumber
\\
&& \hspace*{-0.5cm}
+\frac{\mu-\lambda}{s[\mu+\lambda (d-1)]}
\sum_{n=0}^{+\infty} \left[\frac{\lambda (d-1)}{\mu+\lambda (d-1)}\right]^n
\left\{
\left(1-\frac{\lambda}{\mu}\right)^{\frac{\alpha}{\lambda}}
\left[1-\frac{\alpha (\mu-\lambda)}{\lambda (\mu-\lambda+s)}\right]
{}_{2}F_{1}^{\star}
\right\}^{n}
\nonumber
\\
&& \hspace*{-0.5cm}
=\frac{\lambda}{s[\mu+\lambda (d-1)]}
\sum_{n=0}^{+\infty} \left[\frac{\lambda (d-1)}{\mu+\lambda (d-1)}\right]^n
\sum_{j=0}^{n+1}{n+1 \choose j}(-1)^j
\left\{1-
\left(1-\frac{\lambda}{\mu}\right)^{\frac{\alpha}{\lambda}}
\left[1-\frac{\alpha (\mu-\lambda)}{\lambda (\mu-\lambda+s)}\right]
{}_{2}F_{1}^{\star}
\right\}^{j}
\nonumber
\\
&& \hspace*{-0.5cm}
+\frac{\mu-\lambda}{s[\mu+\lambda (d-1)]}
\sum_{n=0}^{+\infty} \left[\frac{\lambda (d-1)}{\mu+\lambda (d-1)}\right]^n
\sum_{j=0}^{n}{n \choose j}(-1)^j
\left\{1-
\left(1-\frac{\lambda}{\mu}\right)^{\frac{\alpha}{\lambda}}
\left[1-\frac{\alpha (\mu-\lambda)}{\lambda (\mu-\lambda+s)}\right]
{}_{2}F_{1}^{\star}
\right\}^{j},
\end{eqnarray*}
where ${}_{2}F_{1}^{\star}={}_{2}F_{1}\left(\frac{\alpha}{\lambda},1+\frac{s}{\mu-\lambda};
2+\frac{s}{\mu-\lambda};\frac{\lambda}{\mu}\right)$. Hence, taking the inverse 
Laplace Transform we get
\begin{eqnarray}
\nonumber
&& \hspace*{-1.cm}
p(0,t)=1+\frac{\lambda}{\mu+\lambda (d-1)}
\sum_{n=0}^{+\infty} \left[\frac{\lambda (d-1)}{\mu+\lambda (d-1)}\right]^n
\sum_{j=1}^{n+1}{n+1 \choose j}\left(-\frac{\mu}{\lambda}\right)^j
\left[1-\left(1-\frac{\lambda}{\mu}\right)^{\frac{\alpha}{\lambda}}\right]^j
F_Y^{(j)}(t)
\nonumber
\\
&& \hspace*{-0.cm}
+\frac{\mu-\lambda}{\mu+\lambda (d-1)}
\sum_{n=0}^{+\infty} \left[\frac{\lambda (d-1)}{\mu+\lambda (d-1)}\right]^n
\sum_{j=1}^{n}{n \choose j}\left(-\frac{\mu}{\lambda}\right)^j
\left[1-\left(1-\frac{\lambda}{\mu}\right)^{\frac{\alpha}{\lambda}}\right]^j
F_Y^{(j)}(t),
\label{InvLaplp0LminM}
\end{eqnarray}
where $F_Y^{(j)}(t)$ is the distribution function of the sum of $j$ independent random
variables having probability density
$$
f_{Y}^{(1)}(t)=\frac{\alpha (\mu-\lambda)^{\frac{\alpha}{\lambda}+1} {\rm e}^{-(\mu-\lambda) t}}
{\left[1-\left(\frac{\mu-\lambda}{\mu}\right)^{\frac{\alpha}{\lambda}}\right]
\left[\mu-\lambda {\rm e}^{-(\mu-\lambda) t}\right]^{\frac{\alpha}{\lambda}+1}},
\qquad t>0.
$$
In conclusion, from Eq.\ (\ref{InvLaplp0LminM}) we obtain Eq.\ (\ref{p0}) when $\lambda<\mu$.
\section*{Acknowledgements}
This research is supported by the biennial research project
``Analytical and stochastical methods for partial differential equations on networks'',
within the  2012-13 Vigoni Program, and  by GNCS-INdAM.
%

%
\end{document}